\documentclass{amsart}

\usepackage{amssymb, amsmath, esint, xcolor}
\usepackage[backref=page]{hyperref}

\renewcommand{\[}{\begin{equation}\begin{aligned}}
\renewcommand{\]}{\end{aligned} \end{equation}}

\usepackage{verbatim}

\newtheorem{thm}{Theorem}
\newtheorem{prop}[thm]{Proposition}
\newtheorem{lemma}[thm]{Lemma}

\newcommand{\x}{\mathbf{x}}

\theoremstyle{remark}
\newtheorem{remark}[thm]{Remark}

\theoremstyle{definition}
\newtheorem{definition}[thm]{Definition}

\author{G\'abor Sz\'ekelyhidi}
\address{Department of Mathematics, Northwestern University, Evanston,
  IL, USA}
\email{gaborsz@northwestern.edu}

\title[Generic neck pinches]{Generic neck pinch singularities along 2D Lagrangian mean
  curvature flow}
\date{}

\begin{document}

\begin{abstract}
  We introduce a notion of nondegenerate neck pinch singularity along the
  Lagrangian mean curvature flow of surfaces in a Calabi-Yau
  surface. We show that such singularities can occur, are stable under
  small perturbations, and any neck pinch singularity can be perturbed
  to such a nondegenerate singularity near the singular time. Using this we
  answer some questions raised by Neves and Joyce. We also introduce
  nondegenerate teardrop singularities and show that these cannot
  occur for embedded flows. 
\end{abstract}

\maketitle

\section{Introduction}
An important problem in complex geometry is to construct special
Lagrangian submanifolds of a Calabi-Yau manifold $X$, and an ambitious proposal
by Thomas-Yau~\cite{Thomas01, TY02} suggests that for a given Lagrangian
$L\subset X$, we can deform $L$ into a special Lagrangian submanifold
when $L$ is ``stable'' in the sense of geometric invariant
theory. Since special Lagrangian submanifolds are area minimizing in
their homology class, a natural approach, studied by Thomas-Yau, is to
consider the mean curvature flow
\[ \frac{d}{dt} L_t = H(L_t), \]
with initial condition given by $L$. Indeed, it is known by
Smoczyk~\cite{Smoczyk} that the Lagrangian condition is preserved
under this flow. Since not all Lagrangian homology classes
admit special Lagrangian representatives (see Wolfson~\cite{Wol05}),
singularities are expected to form. Indeed, Neves~\cite{Neves13}
showed that for any Lagrangian $L$ we can find arbitrarily small Hamiltonian
perturbations $L'$ such that the mean curvature flow with initial
condition $L'$ leads to singularities.

Joyce~\cite{JoyceEMS} proposed a far reaching refinement of the
Thomas-Yau conjecture, relating the
appearance of different singularities along the flow to the behavior of the
corresponding Lagrangians in the derived Fukaya category
$D^b\mathcal{F}(X)$. Roughly speaking, Joyce conjectured that any Lagrangian with
unobstructed Floer homology can be decomposed in a certain sense into
special Lagrangians, using the mean curvature flow through
singularities. Two basic types of singularities
considered by Joyce are the following. For simplicity we focus on the
case when $\dim_{\mathbb{C}}X =2$, although the  conjectural picture applies
in all dimensions. 
\begin{itemize}
  \item {\bf Neck pinches}: these are singularities with tangent flow
    given by the special Lagrangian union $P_1\cup P_2$ of two
    transverse planes. It was shown in \cite{LSSz22} that such
    singularities are modeled on the
    family of complex hypersurfaces $zw = \epsilon$ in $\mathbb{C}^2$,
    as $\epsilon \to 0$. They are the basic mechanism for a Lagrangian
    decomposing into several Lagrangians, or an embedded Lagrangian
    becoming immersed, also considered by
    Thomas-Yau~\cite{TY02} (see also \cite{LSz24}).
  \item {\bf Teardrops}: these singularities have tangent flows
    $P_1\cup_\ell P_2$ given
    by two planes meeting along a line, so they are not special
    Lagrangian. Joyce conjectures~\cite[Conjecture 3.33]{JoyceEMS}
    that such singularities can occur if the Floer homology theory of the Lagrangian becomes
    obstructed as one approaches the singular time. See \cite[Figure
    3.10]{JoyceEMS} for justification of the ``teardrop'' name. Note
    that at present we are not able to show that such singularities
    actually appear. 
  \end{itemize}
An optimistic conjecture is that in two dimensions these are the only
singularities that appear generically, and therefore it is important
to understand them better.

Let us remark here on the much more well studied setting of the mean
curvature flow of surfaces in $\mathbb{R}^3$. By the breakthrough
works of Colding-Minicozzi~\cite{CM12}, 
Bamler-Kleiner~\cite{BK23} and
Chodosh-Choi-Mantoulidis-Schulze~\cite{CCMS24, CCMS24_2}, it is know
that for generic initial closed embedded surfaces in $\mathbb{R}^3$
the mean curvature flow only de\-ve\-lops spherical and cylindrical
singularities, as conjectured by Huisken. Cylindrical singularities 
have been studied extensively, and in particular several authors have
studied notions of nondegenerate neck pinch singularities (see
\cite{GS09,SS20,SWX25}), which correspond to locally decomposing the surface into
two components. 

Our main results pertain to neck pinches along the Lagrangian mean
curvature flow of surfaces. In the setting
of equivariant mean curvature flow in $\mathbb{C}^n$,
Neves~\cite{Neves07} gave initial conditions that lead to a finite
time singularity, while Wood~\cite{Wood24} showed that
these singularities have tangent flows given by $P_1\cup
P_2$. See also Lotay-Oliveira~\cite{LotayOliveira} for neck pinches
with circle symmetry. For the flow in a compact ambient space, Neves~\cite{Neves13}
built on these equivariant examples to show the existence of finite time
singularities, but it was not clear what the structure of these
singularities is -- we will discuss this in more detail below. As such
we do not yet have examples of finite time neck pinch singularities
forming in compact ambient spaces  (for infinite time singularities see
Su-Tsai-Wood~\cite{STW24}). Furthermore we do not know what their
finer properties are in the 
context of Joyce's conjecture. Resolving these questions is our main
goal in this paper. 

We will study a notion of nondegenerate neck pinch
singularity. For the precise notion see Definition~\ref{defn:nondeg},
but let us describe it informally here. Since the (unique) tangent
flow at a neck pinch singularity is the union $P_1\cup P_2$ of two
transverse planes, we have a corresponding solution of the rescaled
mean curvature flow $M_\tau$, with $M_\tau \rightharpoonup P_1\cup P_2$ as
$\tau\to \infty$ (see \eqref{eq:RMCF}). The Lagrangian angle $\theta$ satisfies the drift
heat equation along the flow $M_\tau$, and normalizing $\theta$ to
have zero average and weighted $L^2$-norm 1 at different times, we can
extract a limit solution $u_\infty$ of the drift heat equation on
$P_1\cup P_2$. This is simply a pair of solutions $(u_{\infty, 1},
u_{\infty, 2})$ on the two planes. We say that the neck pinch is
nondegenerate if this limit solution is $(-(8\pi)^{-1/2},
(8\pi)^{-1/2})$, and at the same time for sufficiently large $\tau$ the
  surface $M_\tau$ is Hamiltonian isotopic to the Lagrangian connect
  sum $P_1 \# P_2$ (see Section~\ref{sec:connectsum} for more
  details). Note that the $(8\pi)^{1/2}$ constant appears because of
  the $L^2$ normalization. The important point is that the
  limit solution is negative on $P_1$ and positive on
  $P_2$, rather than the other way around. Even more
informally, a nondegenerate neck pinch is locally modeled on a
shrinking Lawlor neck, where the angle between the two asymptotic
planes of the Lawlor neck is increased slightly, similarly to the
equivariant examples of Neves~\cite{Neves07}. 

Our first main result 
is the following. The result bears some relation to works on the mean curvature
flow of surfaces in $\mathbb{R}^3$ such as \cite{GS09,SS20,SWX25}, but the methods
are completely different. 

\begin{thm}\label{thm:perturb}
  Suppose that for $t\in [0,T)$, $L_t$ is a rational, graded Lagrangian mean curvature
  flow in a compact Calabi-Yau surface (see
  Section~\ref{sec:LMCFprelim} for definitions).
  Suppose that the flow develops
  a first time singularity at $(x_0, T)$ with tangent flow given by the
  transverse union of two planes. Then we can find a small Hamiltonian
  perturbation $L_{T'}'$ of $L_{T'}$ for some $T' < T$ close to
  $T$, such that the flow with any initial
  condition sufficiently close to $L_{T'}'$ develops a nondegenerate neck pinch
  singularity at a time $T_0 < T$, and no other singularities at that time.
\end{thm}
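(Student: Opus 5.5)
The strategy is to use the structure theory of neck pinches from \cite{LSSz22}: near a neck pinch, the rescaled flow $M_\tau$ is a graph over a shrinking Lawlor neck (or a connect sum $P_1\# P_2$) glued to the two planes. The Lagrangian angle $\theta$ solves the drift heat equation $\partial_\tau\theta = \Delta\theta - \tfrac12 \x\cdot\nabla\theta$ along $M_\tau$. Its decay is governed by the spectrum of the drift Laplacian on $P_1\cup P_2$. On the locally constant functions the eigenvalue is $0$, and the next eigenvalues are $-1/2$ and lower. After normalization, the limit $u_\infty$ therefore lies in the span of the constants $(a,b)$ if the projection onto constants dominates, and otherwise in a faster-decaying mode. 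The plan has three stages:
\begin{itemize}
\item show that the singular set and the limit $u_\infty$ are controlled by a finite-dimensional family of perturbations;
\item use Hamiltonian perturbations to force the constant mode $(a,b)$ with $a<0<b$, which together with the normalization gives $(-(8\pi)^{-1/2},(8\pi)^{-1/2})$;
\item show that this sign condition and the connect-sum topology are open conditions.
\end{itemize}

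\textbf{Step 1 (choice of $T'$ and the topology).} By the first-time assumption and uniqueness of the tangent flow, for $T'$ close to $T$ the rescaled surface near $x_0$ is close to $P_1\cup P_2$ away from a small neck. It is Hamiltonian isotopic either to $P_1\# P_2$ or to $P_2\# P_1$, the two possible Lagrangian surgeries. Since the flow is graded and rational, the relative grading (the Lagrangian angle jump across the two planes) decides which one occurs. In the degenerate orientation, and also in the nondegenerate orientation whenever the constant mode is not strictly of the right sign, we still need to perturb.

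\textbf{Step 2 (perturbation by a Hamiltonian supported near $x_0$).} Choose $L'_{T'} = \phi_{sf}(L_{T'})$, where $f$ is a Hamiltonian cutoff near $x_0$ that, in Darboux coordinates, is approximately linear and shifts the two sheets apart. To first order this adds a locally constant function to $\theta$, changing it by $+s\delta$ on one sheet and $-s\delta$ on the other. Equivalently, it rotates $P_2$ slightly relative to $P_1$, which increases the angle between the asymptotic planes of the neck as in Neves's examples. By pseudolocality and continuous dependence of the flow on initial data in the smooth region, the perturbed flow stays close to the original up to a time slightly before $T$. We then track the rescaled flow around a nearby spacetime point. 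The perturbation makes the constant mode of $\theta$ of size comparable to $s$, while the non-constant modes decay at rate at least $e^{-\tau/2}$. On the other hand, Lawlor-neck/area arguments show that the neck can only pinch (rather than resolve smoothly) when the constant mode has the nondegenerate sign. Choosing $s$ with the correct sign relative to the existing neck therefore produces a singularity at some $T_0<T$ at which the constant mode dominates. Fixing the sign of the $L^2$-normalized limit then gives exactly $(-(8\pi)^{-1/2},(8\pi)^{-1/2})$. The claim that there is no other singularity at time $T_0$ follows because $T_0<T$, $T$ was the first singular time, and the perturbation is small.

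\textbf{Step 3 (stability).} For openness, I would prove a quantitative version of the nondegeneracy condition. At some large rescaled time $\tau_0$, suppose $M_{\tau_0}$ is Hamiltonian isotopic to $P_1\#P_2$, is close to a scaled Lawlor-type neck, and $\theta$ has constant mode $(a,b)$ with $b-a\ge c>0$ dominating the rest. Then I would show by a barrier/ODE argument that these conditions propagate forward in $\tau$, with the neck pinching in finite time. Perturbing the center point in spacetime as needed, the conditions are open in the initial data by continuous dependence. This yields the ``any initial condition sufficiently close'' part.

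\textbf{Main obstacle.} The hardest step is the forward propagation in Step 3, and the corresponding existence claim in Step 2: controlling the rescaled flow near a neck of scale $\to0$, where the linearization on $P_1\cup P_2$ degenerates. I would handle it with a Łojasiewicz-type/spectral argument on the drift heat equation on $M_\tau$. I would compare against the explicit equivariant shrinking-neck solutions of Neves and Wood as barriers for the Lagrangian angle.
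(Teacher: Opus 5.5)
Your overall picture is the right one and matches the paper's $N_{\kappa_0}$: rotate one plane so that the angle on the second summand of $P_1\#P_2$ exceeds that on the first, with the sign fixed relative to the connect-sum type. But the step everything rests on is missing. The sentence ``Lawlor-neck/area arguments show that the neck can only pinch \ldots\ when the constant mode has the nondegenerate sign'' points in the wrong direction. You need that a gap of the right sign \emph{forces} a pinch in controlled time, and that the resulting point is a neck pinch rather than a self-intersection. No argument for either is given.

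In the paper this step is topological:
\begin{enumerate}
\item Using \cite{LSSz22}, $L_{T'}$ is close to a Lawlor neck $N\simeq P_1\#P_2$ on a huge range of scales. It is replaced, in an $R_0$-Darboux chart, by exactly $\epsilon_0N_{\kappa_0}$.
\item If the flow stayed smooth up to scaled time $2$, Neves's theorem~\cite[Theorem 4.1]{Neves13} would make $L_1$ close to the unique expander asymptotic to $P_1\cup P_{2,\kappa_0}$, which is $P_2\#P_1$. By Theorem~\ref{thm:Seidel} the flow cannot stay embedded, so there is a first time $T_0<2$ with a density-$\ge 2$ point (Proposition~\ref{prop:Neves}).
\item Monotonicity gives Gaussian area $\ge 2-\epsilon_1$ of the initial datum at scale $\sqrt{T_0}$. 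Lemma~\ref{lem:NkappaG} says this happens only at scales where $\epsilon_0N_{\kappa_0}$ is a small graph over $P_1\cup P_2$ with angle gap $\kappa_0$.
\item Proposition~\ref{prop:LSSzunique} propagates this graphicality to all $\tau$. So the tangent flow is $P_1\cup P_2$, and since the flow was embedded before $T_0$, this is a genuine neck pinch.
\end{enumerate}
None of these ingredients appears in your proposal.

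Two errors in Steps 1--2 also point the wrong way. First, the angle jump does not decide whether the neck is $P_1\#P_2$ or $P_2\#P_1$. Both Lawlor necks are special Lagrangian, and the type is a Hamiltonian-isotopy invariant that you fix by labelling. What the angle jump decides is the type of the expander, and that mismatch is exactly what forces the singularity. Second, an approximately linear Hamiltonian only translates. The angle changes by $\pm\Delta_L f$, which vanishes for linear $f$ on the planes and on any minimal neck. Rotating $P_2$ relative to $P_1$ needs a Hamiltonian of quadratic growth with $H=0$ on $P_1$ and $\Delta H=1$ on $P_2$, such as the paper's $\tfrac12 y_1^2$.

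The nondegeneracy and stability parts rely on tools that are not available here. The splitting into a constant mode $(a,b)$ plus modes decaying like $e^{-\tau/2}$ exists only on the limit $P_1\cup P_2$. Along $M_\tau$, which is connected through the neck, $(a,b)$ does not evolve separately, and the exchange through the neck is precisely what must be controlled. The paper handles this in three steps:
\begin{enumerate}
\item An initial slow-decay estimate, Proposition~\ref{prop:slowdecay}, proved by heat-kernel and monotonicity estimates. It shows $\kappa_0^{-1}\theta$ is close to $(0,1)$ on the two sheets for $\tau\in[0,1]$.
\item Iterating the three-annulus Lemma~\ref{lem:3ann} over all integer times. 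This is possible because Proposition~\ref{prop:LSSzunique} keeps $M_\tau$ uniformly graphical, and it forces the normalized limit to be a multiple of $(1,-1)$.
\item Using the overlapping intervals $[k,k+10]$ to show that the sign, fixed at $\tau=0$ by the gap $\kappa_0$, never switches.
\end{enumerate}

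For Step 3, barriers are not an option. There is no comparison principle for mean curvature flow in codimension two. Lagrangian angles of two different flows also live on different surfaces, so the maximum principle cannot compare them. In the paper's approach stability needs no separate argument. The hypotheses of Propositions~\ref{prop:singexist} and \ref{prop:nondeg10} enter only through quantitative closeness to $\epsilon_0N_{\kappa_0}$ on a large Darboux chart, plus area, angle and exactness bounds. The forward propagation you want is then supplied by Proposition~\ref{prop:LSSzunique} and Lemma~\ref{lem:3ann} for all initial data satisfying these conditions.
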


In fact it is enough to assume that $L_t$, for $t$ close to $T$, is sufficiently close to a
Lawlor neck at a sufficiently large range of scales for the conclusion
of this result to hold (see Proposition~\ref{prop:singexist} for the precise
statement). As a consequence we can construct examples of
Lagrangian mean curvature flows in Calabi-Yau surfaces developing
nondegenerate neck pinch singularities. This is even possible with an
initial Lagrangian that is Hamiltonian isotopic to a special
Lagrangian and whose Lagrangian angle has arbitrarily small
oscillation. 

\begin{thm}\label{thm:2}
  There is a compact Calabi-Yau surface $X$, such that for
  any $\delta > 0$ we can find a Lagrangian $L_0$ in $X$ with the
  following properties. The Lagrangian $L_0$ is Hamiltonian isotopic to a special
  Lagrangian $L$ in $X$, the Lagrangian angle of $L_0$ has oscillation at
  most $\delta$, and the flow starting with any sufficiently small smooth
  perturbation of $L_0$ exists until a finite time $T
  > 0$ at which time it develops a nondegenerate neck pinch singularity. 
\end{thm}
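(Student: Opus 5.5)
The plan is to build $L_0$ by gluing a small, slightly ``over-angled'' Lawlor neck into a transverse intersection point of two special Lagrangians. The perturbation statement (Proposition~\ref{prop:singexist}, the quantitative form of Theorem~\ref{thm:perturb}) then produces the nondegenerate neck pinch.

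\textbf{Choice of $X$ and of the special Lagrangian $L$.} I would take $X$ to be a flat torus $T^4=\mathbb{C}^2/\Lambda$ with the standard Calabi-Yau structure, or a K3 surface with a hyperk\"ahler rotation, chosen so that there are two special Lagrangian tori $\Sigma_1,\Sigma_2$ of the same phase meeting transversely at a point $p$. In a flat torus I would use linear subtori whose tangent planes $P_1,P_2$ form a special Lagrangian pair, i.e.\ the sum of their characteristic angles is $\pi$. Let $L$ be the special Lagrangian connect sum of $\Sigma_1$ and $\Sigma_2$ at $p$, obtained by gluing in a Lawlor neck of scale $\epsilon$ and correcting it to be special Lagrangian. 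The gluing theorems of Joyce, or an explicit construction in the flat case, give this. Rationality and gradedness must also be checked: in a flat torus with suitable rational lattices the periods of the Liouville form are rational, and the angle lifts globally.

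\textbf{Construction of $L_0$.} Near $p$, replace the Lawlor neck in $L$ by a Lawlor-type neck of scale $r\ll \epsilon$ whose asymptotic planes are $P_1$ and $P_2'$. Here $P_2'$ is $P_2$ rotated by a small angle $\eta$ in the direction that increases the angle between the planes, which gives the sign required in Definition~\ref{defn:nondeg}. Then interpolate back to $P_2$ at a larger scale using a cutoff of the Hamiltonian generating the rotation. Because the construction is a compactly supported modification of the Lagrangian connect sum $P_1\# P_2$, an exact Lagrangian isotopy, $L_0$ is Hamiltonian isotopic to $L$. The Lagrangian angle of $L_0$ differs from the constant angle of $L$ only by terms of order $\eta$ together with cutoff errors. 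Choosing $\eta$ and the interpolation scales appropriately, I would make the oscillation at most $\delta$.

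\textbf{Applying the perturbation result.} Next I would verify that the flow from $L_0$ satisfies the hypotheses of Proposition~\ref{prop:singexist}: for a time interval, and at a large range of scales around $p$, the flow stays close to a shrinking Lawlor neck. Pseudolocality and short-time estimates away from $p$ keep the rest of $L_t$ close to $\Sigma_1\cup\Sigma_2$, which is static. Near $p$, the model flow of a Lawlor neck with increased angle is the equivariant-type neck pinch of Neves~\cite{Neves07}, Wood~\cite{Wood24}: the neck shrinks in time comparable to $r^2/\eta$. Choosing $r$ small relative to $\eta$ and to $\epsilon$ ensures the pinch happens before the far-field errors matter. The proposition then gives a nondegenerate neck pinch at a finite time $T$, and it is stable under small smooth perturbations of $L_0$. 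Its conclusion also excludes other singularities at that time.

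\textbf{Main obstacle.} The delicate step is the quantitative bookkeeping in the last part: Proposition~\ref{prop:singexist} needs closeness to a Lawlor neck over a sufficiently large range of scales, and this must hold simultaneously with the bound $\delta$ on the angle oscillation and with no earlier singularity elsewhere. I would handle this by scaling. Rescaling by $r^{-1}$, the local picture converges as $r\to0$ to an exact rotated Lawlor neck in $\mathbb{C}^2$, for which the required closeness holds over any fixed range of scales. I would then choose $r$ last, after $\eta$ has been fixed by $\delta$.
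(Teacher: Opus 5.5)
\textbf{The special Lagrangian $L$ you start from does not exist.} Your first step fails: no compact Calabi--Yau surface contains a smooth special Lagrangian $L=\Sigma_1\#\Sigma_2$ obtained by gluing a small Lawlor neck at a single transverse intersection point of two special Lagrangians of the same phase $\theta_0$. So neither Joyce's gluing nor an explicit flat construction can supply it.

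To see this, set $\sigma=\omega+i\,\mathrm{Im}(e^{-i\theta_0}\Omega)$. This closed complex $2$-form vanishes on $\Sigma_1$, $\Sigma_2$ and $L$.
\begin{itemize}
\item The neck circle $\rho$ separates $L$. It therefore bounds a piece $S_1\subset L$ that is a small perturbation of $\Sigma_1$ with a disc removed.
\item Capping $\rho$ by a disc $D$ in the Darboux chart gives a cycle $S_1\cup D$ homologous to $\Sigma_1$. Hence $\int_D\sigma=\langle[\sigma],[\Sigma_1]\rangle-\int_{S_1}\sigma=0$, so every local primitive of $\sigma$ has zero period on $\rho$.
\item In the coordinates of the paper's proof (with $\theta_0=0$), $W_1\,d(W_2+bW_1)$ is a primitive of $-\sigma$. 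Its period over the neck circle $\rho_{t'}$ of $W_1W_2=t'$ is $-2\pi i t'\neq 0$.
\item The paper only needs the real part, the $\lambda$-period, to vanish. Your $L$ would need both parts to vanish. Since $L$ converges to such a neck after rescaling, this is impossible once the neck scale is small.
\end{itemize}
Algebro-geometrically, in the rotated complex structure: in $T^4$, two elliptic curves meeting once force $X\cong E_1\times E_2$, and the only curves in the class $[E_1]+[E_2]$ are translates of $E_1\cup E_2$. On a K3, two smooth elliptic curves cannot meet in exactly one point at all. So your $L_0$ is never shown to be Hamiltonian isotopic to a special Lagrangian, which is the main content of the statement.

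\textbf{How the paper avoids this.} It smooths the node of a single immersed special Lagrangian instead of connect-summing two different ones. It takes an elliptic K3 surface with a nodal fiber $C_0$, hyperk\"ahler rotates so that the fibers are special Lagrangian, and sets $L=C_t$ for small $t$. Near the node, $C_t$ is close to a Lawlor neck $W_1W_2=t'$. Now the neck circle is the vanishing cycle, which is nontrivial in $H_1(C_t)$, so no period constraint arises.

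What the paper still has to arrange is exactness of $\lambda$ on the chart. This is condition (3) of Definition~\ref{defn:model2}, which feeds hypothesis (5) of Proposition~\ref{prop:LSSzunique}. It is exactly the computation showing that the $\lambda$-period has opposite signs at $t'=\pm i$, so some $t'$ gives an exact neck. A small Hamiltonian perturbation then makes $L_0$ equal to $\epsilon_0N_{\kappa_0}$ in the chart, and Propositions~\ref{prop:singexist} and \ref{prop:nondeg10} apply.

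Your proposal never addresses this exactness condition. Note also that shrinking the neck from scale $\epsilon$ to $r\ll\epsilon$ is a Hamiltonian isotopy only when the $\lambda$-period of the neck circle vanishes, because that period scales quadratically with the neck size. The local part of your plan agrees with the paper once the global special Lagrangian is chosen correctly: tilting one asymptotic plane by a cut-off quadratic Hamiltonian, getting angle oscillation $O(\kappa_0)$, and choosing the neck scale last.
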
 

As far as we know this gives the
first examples of finite time singularity formation along the
Lagrangian mean curvature flow in a compact Calabi-Yau surface with
almost calibrated initial condition. Note that infinite time neck
pinches were recently constructed by Su-Tsai-Wood~\cite{STW24},
although those are likely to be unstable under small perturbations. 
Almost calibrated singularities were constructed by Neves~\cite{Neves07}
in $\mathbb{C}^2$ under symmetry assumptions, while the finite time
singularities given by Neves~\cite{Neves13} in a compact Calabi-Yau
surface are not almost calibrated, though they are stable under small
perturbations. The question of the existence of singularities shown in
Theorem~\ref{thm:2} was raised by Neves~\cite[Remark 1.3
(5)]{Neves13}.

When a neck pinch singularity forms along the flow at a point $(x_0,
T)$, then it was shown in \cite{LSSz22} that at time $T$ the
Lagrangian is a $C^1$ immersed submanifold, with a self intersection at
$x_0$, and the flow can be continued as an immersed flow (as long as no
other singularities form at time $T$). In Joyce's conjectural picture
it is important to have some further information about the Lagrangian
angles of the two local sheets at $x_0$ for times $t > T$ in order to
define suitable bounding cochains and continue the flow (see
\cite[Section 2.6]{JoyceEMS} for details). Just knowing that the tangent flow
is given by two transverse planes is likely not sufficient to ensure
this additional condition, however we show that nondegenerate
neck pinches satisfy it. 

\begin{thm}\label{thm:Joyceconj}
  Let $L_t\subset X$ be a rational, graded Lagrangian mean curvature
  flow in a compact Calabi-Yau surface, with a nondegenerate neck pinch
  singularity at $(x_0, T_0)$. Suppose that there are no other
  singularities at time $T_0$, and we extend the flow for times slightly
  greater than $T$ as an immersed flow. In a neighborhood $U$ of
  $x_0$, for times $t > T_0$, the flow is given by a union of two $C^1$ Lagrangians
  $L^1_{T_0}\cup L^2_{T_0}$, intersecting transversely at $p(t)$, where
  $p(T_0)=x_0$. The ordering of these two sheets is
  chosen so that for $t < T_0$ close to $T_0$, inside $U$, the Lagrangian
  $L_t$ is given by the Lagrangian connect sum $L^1_{T_0}\# L^2_{T_0}$.

  Given these conditions the Lagrangian angles after the singular time
  satisfy $\theta_{L^2_t}(p(t)) > \theta_{L^1_t}(p(t))$ for $t > T_0$
  sufficiently close to $T_0$, as conjectured by Joyce~\cite[Conjecture
  3.16(v)]{JoyceEMS}. 
\end{thm}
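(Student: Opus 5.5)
The plan is to translate the nondegeneracy condition, which concerns the rescaled flow before the singular time, into information about the Lagrangian angle of the limiting immersed Lagrangian at time $T_0$. The angles of the two sheets after $T_0$ are then read off by continuity.

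Take $x_0$ as the origin of holomorphic Darboux coordinates, and let $M_\tau = e^{\tau/2} L_{T_0 - e^{-\tau}}$ be the rescaled flow, so that $M_\tau \rightharpoonup P_1\cup P_2$. The Lagrangian angle $\theta$ solves the drift heat equation along $M_\tau$. The key step is a quantitative version of the nondegeneracy condition. After subtracting the average of $\theta$ and normalizing by $\lambda(\tau) = \|\theta - \bar\theta\|_{L^2}$, the limit is $(-(8\pi)^{-1/2}, (8\pi)^{-1/2})$. The constant mode on each plane is an eigenfunction with eigenvalue $0$ of the drift Laplacian, and the nondegenerate limit is the slowest decaying nontrivial mode, so a frequency monotonicity argument should show that $\lambda(\tau)$ decays at most slowly. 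Together with the convergence of $M_\tau$ to the planes on compact sets, this gives
\[ \theta|_{M_\tau\cap (P_2\text{-sheet})} - \theta|_{M_\tau\cap (P_1\text{-sheet})} \approx (2\pi)^{-1/2}\lambda(\tau) > 0 \]
on an annulus $\{1\le |x|\le R\}$ for $\tau$ large. The ordering of $P_1$ and $P_2$ is fixed by the Hamiltonian isotopy of $M_\tau$ to $P_1\# P_2$, which by hypothesis matches the ordering $L^1\# L^2$ in the statement.

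Next, undo the scaling. In unrescaled terms this says that at time $t<T_0$, on the annulus $\{\sqrt{T_0-t}\le |x|\le R\sqrt{T_0-t}\}$, the angle on the $L^2$-sheet exceeds the angle on the $L^1$-sheet by a positive amount comparable to $\lambda$. Since the flow is $C^1$ up to $T_0$ away from $x_0$ by \cite{LSSz22}, and is smooth after $T_0$ as an immersed flow, we need to propagate this sign to the point $p(t)$ for $t>T_0$. For this I would use a barrier argument, or a maximum principle for the difference $\theta_{L^2}-\theta_{L^1}$ transported along a transverse pairing of the sheets. Near $(x_0,T_0)$ both sheets are graphs over $P_1$ and $P_2$, and each $\theta_{L^i}$ satisfies a heat equation on its sheet. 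Applying the parabolic maximum principle to the heat equation for $\theta_{L^i}$ on a small parabolic cylinder, one would compare it with the pieces of the connect sum before $T_0$. The initial data at $T_0$ already carry the ordering, and the ordering persists for short time.

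The main obstacle is the time $T_0$ itself. There the neck has just collapsed, and we need $\theta_{L^2_{T_0}}(x_0) - \theta_{L^1_{T_0}}(x_0) > 0$ strictly, not merely $\ge 0$. If this difference vanished, the ordering after $T_0$ would depend on higher order terms. I would get strict positivity from the decay estimate in the first step: letting $\tau\to\infty$, the leading part of the angle difference converges to a positive constant times the limiting difference on the cones, rather than degenerating to zero. Failing that, I would use the rationality and grading hypotheses, which fix $\theta$ globally, to rule out the degenerate case by a Floer-theoretic or area argument of Thomas--Yau type for the connect sum. I expect controlling this limiting difference to be the hardest part.
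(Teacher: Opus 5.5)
\textbf{There is a genuine gap, and it sits at the step you flag as the main obstacle.}

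The strict inequality $\theta_{L^2_{T_0}}(x_0) > \theta_{L^1_{T_0}}(x_0)$ that you want at the singular time is false. The tangent flow $P_1\cup P_2$ at a neck pinch is a \emph{special} Lagrangian union. The two $C^1$ sheets of $L_{T_0}$ have tangent planes $P_1$ and $P_2$ at $x_0$, so they have the same graded angle there.

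Equivalently, your normalizing factor $\lambda(\tau)=\Vert\theta-\underline{\theta}_\tau\Vert_\tau$ tends to $0$ as $\tau\to\infty$. So the unnormalized difference $(2\pi)^{-1/2}\lambda(\tau)$ on the annuli degenerates, and only the normalized difference has a positive limit. This has three consequences:
\begin{enumerate}
\item Letting $\tau\to\infty$ cannot give strict positivity at $(x_0,T_0)$.
\item No Floer-theoretic or area argument can rule out the degenerate case, because it is what always happens.
\item A maximum principle started from the time-$T_0$ slice on a small parabolic cylinder gives nothing. It bounds $\theta_{L^1}(p(t),t)$ above by a quantity at least $\theta(x_0,T_0)$, and $\theta_{L^2}(p(t),t)$ below by a quantity at most $\theta(x_0,T_0)$.
\end{enumerate}
The sign after $T_0$ is a scale-by-scale effect of the normalized angle. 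Your proposal has no mechanism that carries this normalized information through the singular time.

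\textbf{What the paper's argument supplies.}
\begin{enumerate}
\item Take the parabolic rescalings $L^k_t=e^{-\tau/2}M_{k+\tau}$, with $\tau=-\ln(-t)$, so that $L^k_{-1}=M_k$. By \cite{LSSz22} these continue as immersed flows for $t\in[-1,1]$, with a single singular point at $(0,0)$.
\item Pseudolocality \cite{INS19} upgrades the convergence $L^k_t\to P_1\cup P_2$ to smooth convergence on compact subsets of $[-1,1]\times(\mathbb{C}^2\setminus\{0\})$. Since for $t>0$ the flow consists of two sheets near the origin, the convergence is even smooth on compact subsets of $(0,1]\times\mathbb{C}^2$.
\item The normalized angles $U_k$ agree at $t=-1$ with $u_k=(\theta-\underline\theta_k)/\Vert\theta-\underline\theta_k\Vert_k$. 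On the region above they converge to a solution of the heat equation on $P_1\cup P_2$. By nondegeneracy this must be the constant pair $(-(8\pi)^{-1/2},(8\pi)^{-1/2})$. Every subsequential limit is identified, so this holds along the full sequence.
\item Hence on the fixed region $A=([0,1/2]\times B_{1/2})\setminus([0,1/10]\times B_{1/10})$, $U_k$ is strictly larger on the second sheet for all large $k$. The normalization is affine with positive slope, so the same holds for $\theta$.
\item Rescaling back, the regions $A_k$ cover $(T_0,T_0+\delta)\times B_\delta(x_0)$, and in particular contain the points $(p(t),t)$.
\end{enumerate}

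Your first step, the sign on annuli before $T_0$, is fine. It has to be combined with this extension of the blow-ups past $t=0$ at every scale, not with a propagation argument from the time-$T_0$ slice.
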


\begin{remark}
  In the setting of Theorem~\ref{thm:2} we can continue the flow as a
  flow $L_t$ of immersed Lagrangians for time $t$ after the neck
  pinch. In line with Joyce's conjecture we would expect that
  eventually $L_t$ converges to the original special Lagrangian
  $L$. For this we would need to perform an ``opening the neck''
  surgery discussed in \cite[Problem 3.14]{JoyceEMS}, and studied
  by Begley-Moore~\cite{BM17}. It is an interesting problem to show
  that after such an additional surgery the flow will indeed converge to the
  initial special Lagrangian $L$. 
\end{remark}

The basic geometric idea in the proofs of Theorems~\ref{thm:perturb},
\ref{thm:2} comes from Neves's
construction~\cite{Neves13} of finite time singularities, and our
argument gives more information about the singularity formation in his
setting. We briefly
recall Neves's setup, using some of the notation of \cite{Neves13}. We start with
a smooth Lagrangian submanifold $\Sigma$ in a compact Calabi-Yau surface
$X$, and let $x\in \Sigma$. Choosing a different holomorphic volume form if
necessary, and scaling up the metric on $X$, we can arrange that a
very large neighborhood $U$ of
$x$ can be identified with a large ball $B$ in $\mathbb{C}^2$. Under this 
identification we can ensure that the symplectic forms agree, $\Sigma$
coincides with the $x_1x_2$-plane, while the Calabi-Yau structures
agree up to a small error. Neves defines a new Lagrangian $L$, which
agrees with $\Sigma$ outside of $U$, while inside $U\cong B$ it is given by
the circle invariant Lagrangian
\[ L'\cap B = \{ (\gamma(t) \cos\alpha, \gamma(t) \sin\alpha)\, :\,
  \alpha\in S^1, \quad t\geq 0\} \cap B, \]
where $\gamma: [0,\infty)\to \mathbb{C}$ is a suitable profile
curve. The profile curve is chosen so that for a smaller ball
$B'\subset B$ the Lagrangian $L\cap B'= Q^1\cup Q^2$ has two
connected components (see \cite{Neves13} for details). One
component, $Q^2$, is simply a plane, while $Q^1$ is the graded connect
sum $P_1\# P_2$ of two planes. In fact we can arrange that $Q^1$ is a
perturbation of a Lawlor neck as in Definition~\ref{defn:model2}.

Neves shows (see \cite[Proposition 4.6]{Neves13} as well as
Proposition~\ref{prop:Neves} below), that with such an initial Lagrangian $L$, if no
singularity forms, then for $t\in [1,2]$ the flow inside the smaller
ball $B'$ will be well approximated by the union of the flows $Q^1_t$
and $Q^2_t$ of the two separate components. The flow $Q^2_t$ will
remain close to the plane $Q^2$, while $Q^1_t$ will be well
approximated by the unique smooth expander $\sqrt{t}Q$ asymptotic to
$P_1\cup P_2$. By work of Anciaux~\cite{Anc06} (see also more
generally Lotay-Neves~\cite{LN13} and Imagi-Joyce-Dos
Santos~\cite{IJS16}) this expander is the Lagrangian connect sum
$P_2\# P_1$. Since $Q^1$ is the reversed connect sum $P_1\# P_2$, and
these two connect sums are not Hamiltonian isotopic (see
Theorem~\ref{thm:Seidel} below), Neves's expectation is that a
singularity must have formed. Since the flow may a priori fail to
remain embedded inside $B'$,  it was not known that the flow
defines an (ambient) Hamiltonian isotopy. Because of this, 
Neves had to use further indirect arguments to show that a singularity 
must form, and as a result was also not able to describe the type of
singularity that appears. We will show the following, verifying Neves's
expectation. 

\begin{thm}\label{thm:Neves}
  Suppose that in the setup described above, the Lagrangian $Q^1$
  satisfies the conditions in Proposition~\ref{prop:singexist}.
  Then the flow remains embedded inside
  $B'$ until the first singular time $T\in (0,1)$, and at time $T$ the
  component $Q^1_t$ develops a neckpinch singularity.
\end{thm}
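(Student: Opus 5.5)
The proof combines Neves's approximation result (Proposition~\ref{prop:Neves}) with the local singularity existence result (Proposition~\ref{prop:singexist}), applied to the component $Q^1$. The structure is a contradiction argument on the first time at which either a singularity forms or the two components $Q^1_t$, $Q^2_t$ come close to each other inside $B'$.

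\emph{Step 1: separation up to the singular time.} Let $T^*\le 1$ be the supremum of times $s$ such that, on $[0,s)$, the flow is smooth and $L_t\cap B'$ splits as a disjoint union $Q^1_t\cup Q^2_t$ of embedded pieces. On this interval:
\begin{itemize}
\item $Q^2_t$ stays $C^{1,\alpha}$-close to the plane $Q^2$, by pseudolocality together with the smallness of the Calabi--Yau error.
\item $Q^1_t$ is controlled by Proposition~\ref{prop:singexist}: it stays close to a shrinking Lawlor neck, or to its limiting cone $P_1\cup P_2$, at all scales up to the singular time.
\end{itemize}
Neves's choice of profile curve keeps $Q^2$ at a definite distance from the region where $Q^1$ lives. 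The Gaussian density and area bounds then show that the two components cannot approach each other before time $1$. Hence embeddedness persists as long as $Q^1_t$ is smooth. In particular, along this interval the flow of $Q^1_t$ is an honest Hamiltonian isotopy, since it is an embedded exact flow.

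\emph{Step 2: a singularity forms before time $1$.} Suppose no singularity occurred for $t<1$. By Step 1 and Proposition~\ref{prop:Neves}, on $[1,2]$ the piece $Q^1_t$ would be close to the expander $\sqrt t\,Q = P_2\# P_1$. Moreover, $Q^1_t$ would be Hamiltonian isotopic, compactly supported in a slightly larger ball, to $Q^1 = P_1\# P_2$. This contradicts Theorem~\ref{thm:Seidel}, which says the two connect sums are not Hamiltonian isotopic. So there is a first singular time $T<1$, and by Step 1 the flow is embedded in $B'$ up to $T$.

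\emph{Step 3: identifying the singularity.} The singularity cannot occur on $Q^2_t$, which stays close to a plane. So it occurs on $Q^1_t$. Since $Q^1$ satisfies the hypotheses of Proposition~\ref{prop:singexist}, that proposition says that the first singularity of this flow is a (nondegenerate) neck pinch, with tangent flow $P_1\cup P_2$.

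The main obstacle is Step 1. One must check that the regime in which Proposition~\ref{prop:singexist} gives control of $Q^1_t$ (scales of the Lawlor neck) matches the regime of Proposition~\ref{prop:Neves} (times of order $1$). Without this, $Q^1_t$ might leave the controlled region and approach $Q^2_t$ before $T$. I would handle this with a continuity argument: at the first time the two components come within a fixed distance, pseudolocality applied around $Q^2$ still forces a definite gap, which gives the contradiction.
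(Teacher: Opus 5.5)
There is a genuine gap, and it sits at the heart of the theorem. Your Step 1 never rules out that $Q^1_t$ itself becomes immersed, i.e.\ acquires a self-intersection while still smooth. The separation/pseudolocality argument only concerns contact between $Q^1_t$ and $Q^2_t$, which is taken from Neves's analysis and is not where the difficulty lies. For $Q^1_t$ you simply assert that Proposition~\ref{prop:singexist} keeps it close to a shrinking Lawlor neck, or to $P_1\cup P_2$, ``at all scales up to the singular time''. That proposition says nothing of the sort, and no such a priori global control is available. The Hamiltonian isotopy class of $Q^1_t$ is exactly what is in doubt: in the alternative scenario of Proposition~\ref{prop:Neves}, the flow near time $1$ resembles the expander $P_2\# P_1$, not a shrinking neck. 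Without embeddedness of $Q^1_t$, your Step 2 cannot invoke Theorem~\ref{thm:Seidel}; this is precisely the obstruction that stopped Neves. Your Step 3 is then also unsupported, since nothing excludes a smooth immersed point appearing before the singular time.

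The missing idea is to localise at the first failure of embeddedness rather than trying to control the whole flow.
\begin{enumerate}
\item Let $T_{emb}$ be the supremum of times for which $Q^1_t$ is smooth and embedded. Neves's results together with Theorem~\ref{thm:Seidel} give $T_{emb}<1$.
\item There is a point $(x,T_{emb})$ of density at least $2$, since it is either a singular point or an immersed point.
\item Huisken's monotonicity formula, with the small forcing term, pushes this back to the initial data: $\Theta(Q^1,x,\sqrt{T_{emb}})\geq 2-\epsilon_1$.
\item By Lemma~\ref{lem:NkappaG}, $T_{emb}^{-1/2}(Q^1-x)$ is then graphical over $P_1\cup P_2$ on a large annulus, with nearly constant angles on the two sheets. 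It is also Hamiltonian isotopic to $P_1\# P_2$ in $B_1$, in particular connected and exact there.
\item So Proposition~\ref{prop:LSSzunique} applies to the rescaled flow centred at $(x,T_{emb})$, and the tangent flow there is a transverse pair of planes.
\item If the flow were smooth at $(x,T_{emb})$, the two sheets would meet transversely at $x$. By stability of transverse intersections they would already intersect for $t<T_{emb}$, contradicting the definition of $T_{emb}$. Hence $(x,T_{emb})$ is a singular point, and it is a neck pinch.
\end{enumerate}
This single argument gives, all at once, embeddedness up to the singular time, the bound $T<1$, and the type of the singularity. It should replace your Steps 1 and 3.
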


A consequence of this is that at time $T$ the Lagrangian $L_T$ decomposes
as the union of two Lagrangians,  $L_T = L_T^1\cup L_T^2$, where
$L_T^1$ is an immersed Whitney sphere, while $L_T^2$ is a small
perturbation of the original Lagrangian $\Sigma$. This behavior was
conjectured by Joyce~\cite[Example 3.28]{JoyceEMS}.

We now briefly describe the proof of Theorem~\ref{thm:Neves}, which is
closely related to the proofs of Theorems~\ref{thm:perturb} and
\ref{thm:2}. Keeping the notation from above, we study the flow
$Q^1_t$ of the component $Q^1$ inside the ball $B'$. Recall that in
this ball $Q^1$ is a small perturbation of a Lawlor neck, with the
angle between the two asymptotic planes slightly increased (see
Definition~\ref{defn:model2} for the details). The main new ingredient is to
show that $Q^1_t$ remains embedded for times $t < 2$ as long as it is
smooth. To see this, let us define $T_{emb}$ to be the supremum of
times for which $Q^1_t$ is smooth and embedded, so that at $t=T_{emb}$
either the flow becomes immersed, or a singularity forms. From Neves's
results we know that necessarily $T_{emb} < 1$, since otherwise $P_1\#
P_2$ would be Hamiltonian isotopic to $P_2\# P_1$.  

There must be a point $(x, T_{emb})$ where the density of the flow
$Q^1_t$ is at least 2, since the flow either forms a singularity
(and in two dimensions the density of a singular point is at least 2),
or there is an immersed point. From Huisken's monotonicity formula we
find that the Gaussian area of $Q^1$ at scale $\sqrt{T_{emb}}$,
centered at $x$ is also at least $2-\kappa$ for a $\kappa > 0$ that
can be taken arbitrarily small by choosing the initial data $Q^1$
suitably. Using the choice of $Q^1$, we will show that
$T_{emb}^{-1/2}(Q^1-x)$ is well approximated by the special Lagrangian
union $P_1\cup P_2$. We can then invoke the main result of
Lotay-Schulze-Sz\'ekelyhidi~\cite{LSSz22}, which implies that the
tangent flow at $(x, T_{emb})$ is also a transverse union of two
planes, and in particular locally $Q^1_{T_{emb}}$ is the union of two
$C^1$ submanifolds intersecting transversely at $x$. Since $Q^1_t$ is
smooth and embedded for $t < T_{emb}$, this is only possible if a
singularity forms at $T_{emb}$. We have therefore shown that $Q^1_t$
remains embedded until the first singular time, which is at most
1. Therefore the construction also implies that this first singularity
must be a neckpinch, as claimed.

We will also briefly consider teardrop singularities for the
Lagrangian mean curvature flow of surfaces. These are singularities
where tangent flows are given by the union $P_1\cup_\ell P_2$ of two
planes meeting along a line $\ell$. Uniqueness of such tangent flows
is not known, however it follows from the work of
Neves~\cite{NevesSurvey} (see \cite[Proposition 2.7
4.3]{LSSzAncient}) that if one tangent flow is of this form, then any
other tangent flow must be too, perhaps for different planes meeting
along a line.  In \cite[Conjecture 3.33]{JoyceEMS} Joyce proposed 
a mechanism by why non-special Lagrangian tangent flows of this type
could arise, due to the Floer homology of the Lagrangians $L_t$
becoming obstructed. A particular expectation is that such a
non-special Lagrangian tangent flow could only arise if near the
singularity $L_t$ is not embedded. We introduce a notion of
nondegenerate teardrop singularity (see
Definition~\ref{defn:ndteardrop}), and show the following.

\begin{thm}\label{thm:ndteardrop}
  Suppose that for $t\in [0,T)$ the flow $L_t$ is a rational, graded
  Lagrangian mean curvature flow in $\mathbb{C}^2$, with uniform area
  ratio bounds. Suppose that $L_t$ develops a nondegenerate teardrop
  singularity at $(x_0, T)$. Then $L_t$ must have immersed points at
  spacetime points arbitrarily close to $(x_0, T)$. 
\end{thm}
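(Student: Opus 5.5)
We argue by contradiction: assume that in a spacetime neighborhood of $(x_0,T)$ the flow $L_t$ is embedded. The plan mirrors the neck pinch analysis, with the teardrop tangent flow $P_1\cup_\ell P_2$ in place of $P_1\cup P_2$. Passing to the rescaled flow $M_\tau$ centered at $(x_0,T)$, we have $M_\tau\rightharpoonup P_1\cup_\ell P_2$ along a sequence of times. The Lagrangian angle $\theta$ solves the drift heat equation on $M_\tau$. We normalize it to have zero average and unit weighted $L^2$ norm and extract the limit solution $u_\infty=(u_{\infty,1},u_{\infty,2})$ on the two half-plane pairs. We take nondegeneracy in Definition~\ref{defn:ndteardrop} to mean that $u_\infty$ is a specific nonzero solution, constant on each plane, whose sign pattern prescribes which plane ``leads'' in angle. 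Concretely, $\theta$ is asymptotically smaller on $P_1$ than on $P_2$ with a definite gap, as in the neck pinch case.

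The first step is to use the Lagrangian angle and the Liouville/primitive potential to understand the local topology of $M_\tau$ near $\ell$ at large $\tau$. Away from a neighborhood of $\ell$, $M_\tau$ is a union of four graphical half-sheets over the half-planes of $P_1$ and $P_2$. Rationality and gradedness give a primitive $\beta$ with $d\beta=\lambda|_{M_\tau}$, satisfying an evolution equation involving $\theta$. Comparing $\beta$ and $\theta$ across the sheets, the gap in $\theta$ coming from $u_\infty$ forces a definite rotation of the sheets near $\ell$, in a direction fixed by the sign of $u_\infty$.

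The second step is the geometric heart of the argument. In an embedded flow, the two half-sheets meeting along $\ell$ must be connected through a smooth embedded surface near $\ell$. Locally this resembles a graph over the normal plane to $\ell$ of a curve that turns by the dihedral angle. Because the flow is Lagrangian, the Lagrangian angle of such a bridging surface is essentially determined by the tangent direction of its profile curve, via $\theta=\arg$ of the product of the angle of $\ell$ and the tangent angle of the profile. Embeddedness forces the profile curve to turn monotonically, without self-crossing, from the direction of one plane to that of the other, and the direction of turning is fixed. This pins down the sign of $\theta_{P_2}-\theta_{P_1}$ as determined by how the sheets are joined. I would make this rigorous on the rescaled flow using a two-dimensional reduction along $\ell$, via a slicing or pseudo-locality argument on a large ball. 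The expected conclusion is that embeddedness forces the sign of the angle gap opposite to the one required by nondegeneracy, giving the contradiction. The model to compare against is the Grim-Reaper-type translating teardrop in $\mathbb{C}\times\mathbb{R}$, where immersed points are necessary exactly for the angle to wind the ``wrong'' way.

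The main obstacle is controlling $M_\tau$ in the region near $\ell$, where no smooth convergence is available. The tangent flow is singular along an entire line rather than at a point, and uniqueness of the tangent flow is unknown. For this I would use the area ratio bounds and the drift-heat equation for $\theta$ to obtain weighted $L^2$ control of $\theta-u_\infty$ along the whole flow, not just a subsequence. I would then integrate $d\theta$ (the mean curvature one-form) along closed loops linking $\ell$ in $M_\tau$. Embeddedness makes these loops bound regions in the surface, so a degree or winding count relates the integral to the angle gap, and this forces the sign contradiction. Obtaining the winding identity robustly under only weak convergence is where I expect most of the work to lie.
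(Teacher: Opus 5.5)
Your proposal rests on a hypothesis that is not the one in the theorem. In Definition~\ref{defn:ndteardrop}, nondegeneracy is not a condition on the normalized limit of the Lagrangian angle. It concerns the coordinate $w$ normal to both planes, with $P_1=\{x=w=0\}$ and $P_2=\{y=w=0\}$. One projects $\tilde w=e^{-\tau/2}w$ off $V=\mathrm{span}\{\tilde x,\tilde y,\tilde z\}$, normalizes in weighted $L^2$, and requires the limit to be $c\,\theta\tilde z$ with $c\neq 0$.

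Your substitute cannot work as a nondegeneracy condition. Distinct Lagrangian planes meeting along a line have different Lagrangian angles. So on a teardrop tangent flow $\theta$ already converges, with no normalization, to two distinct constants, and the normalized limit of $\theta$ is fixed by the tangent flow. There is also no connect-sum ordering of $P_1,P_2$ here, unlike the neck pinch, so your sign condition can always be arranged by relabelling. An argument deriving a contradiction from it would therefore prove that \emph{every} teardrop forces immersed points. That is Joyce's general expectation; the paper does not claim it, and its nondegeneracy assumption exists precisely to avoid needing it.

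The proposed mechanism also cannot produce the contradiction. On a graded Lagrangian, $\theta$ is a single-valued function, so $\oint_\gamma d\theta=0$ for every closed loop, embedded or not. Moreover, a line in $\mathbb{R}^4$ has simply connected complement, so there are no loops ``linking $\ell$''. The profile-curve picture near $\ell$ is exactly the information that is unavailable, as you acknowledge. In addition, rationality only gives a primitive $\beta$ with values in $\mathbb{R}/2\pi\mathbb{Z}$ (after choosing the scale factors $e^{\tau_i/2}$ to be integers), not a real-valued one.

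The missing idea is the non-embeddedness criterion of \cite[Proposition 4.5]{LSSzAncient}. From nondegeneracy, along the normalized flows $L^i_t$ one gets $\sigma_i^{-1}(w-\phi_i)\to z\theta$ with $\phi_i\in\mathrm{span}(x,y,z)$. One also gets $\sigma_i\to 0$, because $w\to 0$ while $z\theta$ is $L^2$-orthogonal to $x,y,z$. That proposition then uses the equation $(\partial_t-\Delta)(\beta_i+2t\theta_i)=0$. It works with the real-valued function $\cos(\beta_i+2(t-s_1)\theta_i)$, which is well defined even though $\beta_i$ is only defined modulo $2\pi$. This shows that $L^i_t$ is not embedded in $B_2(0)$ for some $t\in(-1,0)$, and scaling back gives immersed points arbitrarily close to $(x_0,T)$.
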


We expect that a similar result holds in a compact Calabi-Yau ambient
space, but we restrict to $\mathbb{C}^2$ here in order to be able to
use the results from \cite{LSSzAncient} on ancient solutions more
directly. Unfortunately at present we do not know whether teardrop
singularities exist, let alone nondegenerate ones. For this, and to
study further properties of such singularities, an important
ingredient would be to develop estimates analogous to the ones in
\cite{LSSz22} used to prove the uniqueness of tangent flows for neck
pinches. 

In the next section we will give some background on the Lagrangian
mean curvature flow and Lagrangian connect sums. In
Section~\ref{sec:singexist} we will give the details of the above
argument, showing that neckpinch singularities must form with suitable
initial conditions. We will further refine this analysis in
Section~\ref{sec:nondeg} where we introduce nondegenerate
singularities, and show that they satisfy the property in
Theorem~\ref{thm:Joyceconj} conjectured by Joyce. In
Section~\ref{sec:teardrop} we will prove
Theorem~\ref{thm:ndteardrop}.

\subsection*{Acknowledgements}
The author thanks Felix Schulze for helpful comments. This work was
supported in part by NSF grant DMS-2506325. 

\section{Preliminaries}
\subsection{Lagrangians and the mean curvature
  flow}\label{sec:LMCFprelim}

Let $X$ be a compact Calabi-Yau surface, equipped with the flat
K\"ahler metric $\omega$, whose volume form is the holomorphic
$2$-form $\Omega = \omega^2/2$. We denote the corresponding
Riemannian metric by $g$, and the complex structure by $J$. A
Lagrangian $L\subset X$ is a two-dimensional, possibly
immersed, submanifold that satisfies $\omega|_L = 0$. We say that $L$
is graded, if it is equipped with a function $\theta : L \to
\mathbb{R}$ that satisfies
\[ \Omega|_L = e^{i\theta} dV_L, \]
where $dV_L$ is the Riemannian volume form. $L$ is special Lagrangian
if we can choose $\theta$ to be constant. Recall that in general we
have $J\nabla\theta = \mathbf{H}$ on $L$, where $\mathbf{H}$ denotes
the mean curvature vector of $L$.

On $\mathbb{C}^2$ we have the Liouville form
\[ \lambda = \sum_{i=1}^2 x_i\, dy_i - y_i \, dx_i, \]
satisfying $d\lambda = 2\omega$. A Lagrangian $L$ is called exact if
$\lambda|_L$ is an exact one-form. 

The (Lagrangian) mean curvature flow is given by a family of
Lagrangians $L_t$ satisfying the evolution equation
\[ \frac{\partial L_t}{dt} = \mathbf{H}(L_t). \]
Following White~\cite{White05}, it is useful to consider an isometric
embedding $X\subset \mathbb{R}^N$, and view the $L_t$ as submanifolds
of $\mathbb{R}^N$ evolving under a mean curvature flow with an
additional term
\[ \frac{\partial L_t}{\partial t} = \mathbf{H} + \nu, \]
where $\nu$ is defined in terms of the second fundamental form of $M$
restricted to the tangent space of $L_t$.

Huisken's monotonicity formula~\cite{Hui90} takes the following form,
for a time dependent family of functions $f$ on $L_t$:
\[ \label{eq:mon} \frac{d}{dt} \int_{L_t} f \rho_{x_0, t_0}\, d\mathcal{H}^2 &=
  \int_{L_t} (\partial_t f - \Delta f) \rho_{x_0, t_0}\,
  d\mathcal{H}^2 + \int_{L_t} f \left|\frac{\nu}{2}\right|^2
  \rho_{x_0,t_0}\, d\mathcal{H}^2 \\
  &\quad - \int_{L_t} f\left| \mathbf{H} -
    \frac{(x-x_0)^\perp}{2(t_0-t)} + \frac{\nu}{2}\right|^2 \rho_{x_0,
    t_0}\, d\mathcal{H}^2. \]

Here $\rho_{x_0, t_0}$ denotes the backwards heat kernel (in 2
dimensions): 
\[ \label{eq:heatkernel} \rho_{x_0, t_0} (x,t) = \frac{1}{4\pi(t_0-t)} \exp\left( -
    \frac{|x-x_0|^2}{4(t_0-t)}\right), \]
for $t < t_0$. In terms of this we also define the Gaussian area of a
Lagrangian $L$ at different scales (still working with the embedding
$L\subset X\subset \mathbb{R}^N$): given $x_0\in \mathbb{R}^N$ and $r_0
> 0$, we let
\[ \Theta(L, x_0, r_0) = \Theta(r_0^{-1}(L-x_0)) = \int_L \rho_{x_0,
    0}(x, -r_0^2)\, d\mathcal{H}^2, \]
where
\[ \Theta(L) = \int_L \rho_{0,0}(x, -1)\, d\mathcal{H}^2. \]

If the flow $L_t$ develops a singularity at a point $(x_0, T_0)$, then
we consider blowup limits of the flow, leading to tangent flows at the
singularity. More precisely, for any sequence $b_k \to \infty$, we
consider the sequence of mean curvature flows
\[ L^k_t := b_k\left( L_{b_k^{-2}t + T_0} - x_0\right). \]
Using the monotonicity formula, it was shown by Huisken~\cite{Hui90}
(see also White~\cite{Whi94}, Ilmanen~\cite{Ilm95}) that along
subsequences this sequence of flows converges in a weak sense to a shrinking solution
of the mean curvature flow, defined for $t\in (-\infty, 0)$ (in
$\mathbb{C}^2$ in our setting). Moreover, for a graded Lagrangian mean
curvature flow in $\mathbb{C}^2$, Neves~\cite{Neves07} showed that the
tangent flows are all given by static flows of unions of Lagrangian
planes. In terms of the tangent flows we say that a singularity is a
neck pinch if a tangent flow is the special Lagrangian union
$P_1\cup P_2$ of two transverse planes, and a teardrop singularity if
a tangent flow is the union $P_1\cup_\ell P_2$ of two planes meeting
along a line. Note that in general it is unknown whether tangent flows
are unique, however for neck pinches this was shown in \cite{LSSz22}.

When studying the tangent flows at a  point $(x_0, T_0)$,
it is convenient to consider the \emph{rescaled} mean
curvature flow, centered at $(x_0, T_0)$ corresponding to a
mean curvature flow $L_t$. For this we consider an isometric embedding
$X\subset \mathbb{R}^N$ as above, and consider the following family of
immersed surfaces in $\mathbb{R}^N$:
\[ \label{eq:RMCF} M_\tau = e^{\tau/2} \left(L_{-e^{-\tau} + T_0} - x_0\right), \]
for $\tau \in [-\log T_0, \infty)$. Note that $M_\tau\subset
e^{\tau/2} X$. The rescaled flow has the property that subsequential limits of
$M_\tau$ as $\tau\to \infty$ are given by the possible tangent flows
of the flow $L_t$ at $(x_0, T_0)$. 

The normal variation of the rescaled flow is given by
\[ \frac{\partial x}{\partial \tau} = \mathbf{H} +
  \frac{x^\perp}{2} + \nu, \]
where $\mathbf{H}$ is the mean curvature of $M_\tau\subset
\mathbb{R}^N$, and $\nu$ depends on the second fundamental form of
$e^{\tau/2}X$. As a result,  $|\nu| \leq Ce^{-\tau/2}$. We will often consider
translations of the rescaled flow $M_\tau$ in time, i.e. flows of the
form $\tau\mapsto M_{\tau + \tau_0}$. In this case the forcing term
above satisfies $|\nu| \leq Ce^{-\tau_0} e^{-\tau}$, which can be made
arbitrarily small for all $\tau > 0$ by choosing $\tau_0$ large. 

In order to recall the main result of \cite{LSSz22}, we need the
following definition, following Fukaya~\cite[Definition 2.2]{Fuk03}.
\begin{definition}\label{defn:rational}
  Suppose that $[\omega]$ defines an integral cohomology class on $X$,
  and let $\xi$ be a complex line bundle equipped with a unitary
  connection $\nabla^\xi$ with curvature form $2\pi i\omega$. If
  $L\subset X$ is Lagrangian, then
  $\nabla^\xi$ defines a flat connection when restricted to $L$. We
  say that $L\subset X$ is rational, if the holonomy group of
  $\nabla^\xi$ on $L$ is a finite subgroup of $U(1)$. 
\end{definition}

In addition we will use the following graphicality notion in several
places.
\begin{definition}
  Let $P_1, P_2 \subset T_0X\subset \mathbb{R}^N$ be two transverse
  planes through the origin. For $c > 0$ and $r < R$ we say that $L\subset
  \mathbb{R}^N$ is $c$-graphical over $L$ on the annulus $A_{r, R} :=
  B_R(0)\setminus \overline{B_r(0)}$, if $L\cap A_{r, R}$ is the graph
  of a vector field $v$ over $(P_1\cup P_2)\cap A_{r, R}$ such that
  \[ |x| |v| + |\nabla v| + |x|^{-1} |\nabla^2 v| + |x|^{-2} |\nabla^3
    v| < c. \]
\end{definition}
Note that if we have a sequence $L_k\subset \mathbb{R}^N$ and $c_k\to
0$ such that $L_k$ is $c_k$ graphical over $P_1\cup P_2$ on $A_{c_k,
  c_k^{-1}}$, then on compact sets of $\mathbb{R}^N\setminus \{0\}$ we
have $L_k \to P_1\cup P_2$ in a natural $C^{2,\alpha}$ sense. 

For a graded Lagrangian $L\subset e^{\tau/2}X \subset \mathbb{R}^N$ we define
the excess (relative to two planes) by
\[  \mathcal{A}(L) = \int_L e^{-|x|^2/4}\, d\mathcal{H}^2 -
  2\int_{\mathbb{R}^2} e^{-|x|^2/4}\, d\mathcal{H}^2 + \inf_{\theta_0}
  \int_{L} |\theta - \theta_0|^2\, e^{-|x|^2/4}\, d\mathcal{H}^2. \]
We can now state the result from \cite{LSSz22} that we will use.

\begin{prop}\label{prop:LSSzunique}
  Suppose that $L_t$ is a rational, graded Lagrangian mean curvature
  flow in a compact Calabi-Yau manifold $X$ as above, and suppose that
  the area ratios of $L_0$ are bounded by $K_0$. 
  Let $\epsilon > 0$. There exists $\delta > 0$, depending on
  $\epsilon$ as well as on the choice of $P_1,P_2$ and $K_0$, with
  the following property. Suppose that we have a rescaled flow
  $M_\tau$ obtained from $L_t$ as above, translated in time so that we
  have the following:
  \begin{enumerate}
    \item along $M_\tau$, the forcing term $|\nu| < \delta$ for all
      $\tau \geq 0$;
    \item on the annulus $A_{\delta, \delta^{-1}}$ the surface $M_0$
      is $\delta$-graphical over $P_1\cup P_2$;
    \item we have the bound $\mathcal{A}(M_0) - \mathcal{A}(M_\tau) <
      \delta$ for all $\tau > 0$;
    \item the surface $M_0 \cap B_1(0)$ is connected, i.e. it is not
      the union of two proper submanifolds.
    \item for any closed curve $\gamma\subset M_0\cap B_1(0)$ we have
      \[ \int_\gamma \lambda = 0, \]
      where $\lambda$ is the Liouville one-form in terms of a Darboux
      chart.
    \end{enumerate}
   Then for all $\tau > 0$ the surface $M_\tau$ is $\epsilon$-graphical
   over $P_1\cup P_2$ on the annulus $A_{\epsilon, \epsilon^{-1}}$. 
 \end{prop}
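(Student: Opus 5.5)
This is the main stability estimate of \cite{LSSz22}, so the plan is to retrace that argument: a contradiction/compactness argument combined with a continuity (``bootstrapping'') argument in $\tau$.

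\textbf{Setup.} Fix $\epsilon>0$ and let $\epsilon'\ll \epsilon$, to be chosen. Define
\[ \tau^* = \sup\{\tau_1 : M_\tau \text{ is } \epsilon'\text{-graphical over } P_1\cup P_2 \text{ on } A_{\epsilon',\epsilon'^{-1}} \text{ for all } \tau\in[0,\tau_1]\}. \]
Pseudolocality and short-time graphical estimates, together with hypothesis (2), give $\tau^*>0$ once $\delta\ll\epsilon'$. The goal is to show $\tau^*=\infty$, and for this we prove the improved estimate: on $[0,\tau^*]$ the flow is in fact $\epsilon'/2$-graphical.

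\textbf{Step 1: the excess decreases and controls the graphs.} By the monotonicity formula \eqref{eq:mon}, applied with $f=1$ and with $f=|\theta-\theta_0|^2$, the quantity $\mathcal{A}(M_\tau)$ is almost monotone, up to errors of size $\delta$ coming from $\nu$. The time integral of $\int|\mathbf{H}+x^\perp/2|^2e^{-|x|^2/4}$ is then bounded by $\mathcal{A}(M_0)-\mathcal{A}(M_\tau)<\delta$, using hypothesis (3).

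On the graphical region we can write each sheet via a Lagrangian potential, i.e. a function $u_i$ on $P_i$. Using the drift heat equation for $\theta$, these functions satisfy (up to small errors) the linearized equation $\partial_\tau u=\mathcal{L}u=\Delta u-\tfrac12 x\cdot\nabla u+u$. Spectral decomposition gives the following dichotomy.
\begin{itemize}
\item Growing modes (constants and linear functions) correspond to translations and to rotations within the Lagrangian planes.
\item Any other growth would be reflected in a loss of excess, which is controlled by (3).
\end{itemize}

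\textbf{Step 2: excluding the dangerous modes, and handling the core.} This is the key step. The translation modes are controlled because the Gaussian density is almost constant. The rotation modes and the relative constant mode between the two sheets need the topological hypotheses (4) and (5). The core of $M_\tau$ in $B_{\epsilon'}$ is the difficulty, since there we have no graphicality.
\begin{itemize}
\item Argue by contradiction. Take a sequence of flows with $\delta_k\to0$ that first violate the $\epsilon'/2$ bound at times $\tau_k$.
\item Rescale and pass to limits, using the area ratio bound $K_0$ and Neves's classification of the limits.
\item The limit on a large time interval is an ancient (or static) flow, with small excess, asymptotic to $P_1\cup P_2$.
\item Connectedness and exactness (hypotheses (4), (5)) pass to the limit, along with the rationality condition. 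These are used to show that the core is modeled on a Lawlor neck $zw=\epsilon$ or its shrinking/expanding versions, or else is the planes themselves. In particular, neither sheet can translate relative to the other by a constant in the Lagrangian angle.
\item The exactness condition (5) kills the Liouville flux, i.e. the mode that would produce a nonzero relative Hamiltonian displacement.
\end{itemize}

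\textbf{Step 3: propagation by a three-annulus argument.} A three-annulus/Łojasiewicz-type argument shows that the graphical norm on $A_{\epsilon',\epsilon'^{-1}}$ decays, or stays comparable, from one unit time interval to the next. Summing over intervals gives the $\epsilon'/2$ bound on all of $[0,\tau^*]$. This contradicts maximality of $\tau^*$, so $\tau^*=\infty$. The conclusion for $\epsilon$ follows since $\epsilon'\le\epsilon$.

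\textbf{Main obstacle.} The hard part is the core analysis in Step 2: ruling out that the neck region drifts or changes topology under only integral excess control. This is where the rationality and exactness hypotheses are essential.
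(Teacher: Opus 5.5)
The paper does not prove this proposition. It is imported from \cite{LSSz22}, so citing that source is all the paper does, and you identify it correctly. Read as a proof, however, your sketch has a genuine gap exactly where the content of \cite{LSSz22} lies: nothing in it prevents the flow from drifting slowly, over infinite time, away from $P_1\cup P_2$.

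In Step~2, first bad times $\tau_k$ that stay bounded are harmless, since compactness gives the static flow $P_1\cup P_2$. But when $\tau_k\to\infty$, the excess drop tends to zero, and the limit of $M^k_{\tau_k+\tau}$ is just a static union of two Lagrangian planes $P_1'\cup P_2'$ at distance between $\epsilon'/2$ and $\epsilon'$ from $P_1\cup P_2$. Knowing that the core is modeled on Lawlor necks does not exclude this, so no contradiction arises.

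Step~3 is meant to exclude it, but fails as written. The graphical norm over $P_1\cup P_2$ does not decay in general: the tangent planes typically differ from $P_1\cup P_2$, as remarked at the start of Section~\ref{sec:nondeg}. And a quantity that merely ``stays comparable'' from one unit interval to the next cannot be summed over infinitely many intervals. What must be summable is the displacement per unit time, i.e.\ the time integral over $[k,k+1]$ of the Gaussian $L^2$ norm of $\mathbf{H}+x^\perp/2$. Hypothesis (3) only bounds the time integral of its \emph{square} over $[0,\infty)$, which does not bound the total displacement.

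Your mode bookkeeping also hides this difficulty. For $\partial_\tau u=\Delta u-\frac12x\cdot\nabla u+u$, the degree-$k$ Hermite modes have rate $1-k/2$:
constants grow at rate $1$; they are trivial on one sheet, but across a neck the relative constant measures the neck.
Linear functions grow at rate $1/2$ and correspond to translations.
Quadratics are \emph{neutral} and correspond to rotations of the planes.
So rotations are not growing modes. They are the neutral directions that defeat a naive three-annulus dichotomy for the graph function, and handling them is the whole uniqueness problem. Nor can decay be extracted from $\theta$: by Propositions~\ref{prop:slowdecay} and \ref{prop:nondeg10}, the oscillation of $\theta$ can decay slower than $e^{-s\tau}$ for any prescribed $s>0$.

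What is missing is a decay mechanism for the velocity itself. You need a quantity defined on the whole connected surface, including the core, that obeys a linear parabolic equation along the nonlinear flow and has no neutral modes to leading order. A three-annulus argument like Lemma~\ref{lem:3ann} then forces geometric decay once the growing modes are excluded by (3) through the Gaussian area. The Lagrangian structure supplies a natural candidate:
\begin{itemize}
\item With $d\beta=\lambda|_{M_\tau}$ one has $\nabla\beta=Jx^\perp$, hence $\mathbf{H}+x^\perp/2=J\nabla(\theta-\beta/2)$.
\item The function $\beta-2\theta$ is $\beta+2t\theta$ before rescaling (cf.\ \cite[Lemma 4.1]{LSSzAncient}). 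It solves $\partial_\tau f=\Delta f-\frac12x\cdot\nabla f+f$ up to forcing errors.
\item At the linear level $\beta-2\theta=-2\partial_\tau u$, which has no quadratic component.
\item Hypotheses (4) and (5) make $\beta$ single valued across the neck. Its relative constant between the sheets is then a geometric quantity that (3) can control.
\end{itemize}
Some argument of this kind is needed, and Steps~2 and 3 do not supply it.
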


\subsection{Lawlor necks and connected sums}\label{sec:connectsum}
We first review Lawlor necks in $\mathbb{C}^2$. Recall that in two
complex dimensions a special Lagrangian $L$ is the same as a complex
curve for a hyperk\"ahler rotated complex structure. Using this, it is
simple to classify special Lagrangians in $\mathbb{C}^2$ that are
asymptotic to a pair of planes near infinity. In suitable rotated
complex coordinates (depending on the Lagrangian angle) we can arrange
that the two planes are given by $z=0$ and $w=0$. Up to
translations, special
Lagrangians asymptotic to these planes are given by the equation $zw = t$, for $t\in
\mathbb{C}$. Note that if $t' = ct$ for a real $c > 0$, then the
corresponding Lagrangians are Hamiltonian isotopic, and are related by
scaling. Note that a similar classification also holds in higher dimensions
for exact special Lagrangians asymptotic to two planes, due to
Imagi-Joyce-dos Santos~\cite{IJS16}. 

We now review the Lagrangian connect sum construction due to
Polterovich~\cite{Pol91}. First we consider the connect sum of the
$x_1x_2$-plane $P_1$ with the $y_1y_2$-plane $P_2$ inside
$\mathbb{C}^2$. We choose a smooth
curve $\gamma : \mathbb{R} \to \mathbb{C} \setminus\{0\}$ such that
\begin{itemize}
\item $\mathrm{arg}(\gamma(t)) \in [0,\pi/2]$ for all $t$,
\item $\gamma(t) = -t\sqrt{-1}$ for $t < -1$, and $\gamma(t) = t$ for
  $t > 1$.
\end{itemize}
We then define the Lagrangian connect sum
\[ P_1\# P_2 = \{ (\gamma(t) \cos \alpha, \gamma(t) \sin\alpha)\, \:\,
  \alpha\in \mathbb{R}/2\pi\mathbb{Z}, \quad t\in \mathbb{R}\}. \]

More generally, suppose that $L_1, L_2$ are two Lagrangians
in $X$, intersecting transversely at a point $p$. We can symplectically identify a
neighborhood of $p$ with a neighborhood of the origin in
$\mathbb{C}^2$, such that under this identification we have $L_1 =
P_1$
and $L_2 = P_2$. The Lagrangian connect sum $L_1\# L_2$ is defined by
replacing $L_1$ and $L_2$ in a small neighborhood of the origin by a
suitably scaled down copy of $P_1\# P_2$. One can show that up to
Hamiltonian isotopy the end result is independent of the choice of
$\gamma$ satisfying the conditions above, and the choice of scaling.

Note that outside of a small neighborhood $U$ of $p$, the two Lagrangian connect sums
$L_1\# L_2$ and $L_2 \# L_1$ coincide with $L_1\cup L_2$, and so the
Lagrangian connect sum is a way of smoothing out the singularity of
the transverse union $L_1\cup L_2$ at $p$. An important 
result due to Seidel~\cite{Sei99} is these two
connect sums are not Hamiltonian isotopic, with isotopies supported in
the neighborhood $U$. Note that this plays an important role in the
work of Thomas~\cite{Thomas01} and Thomas-Yau~\cite{TY02} and was also
highlighted by Neves~\cite{Neves13} as a potential reason for
singularity formation.  While the result is not stated in this form,
it follows from combining Theorem 1.1 and Propositions A.2, A.3 in
\cite{Sei99}. 
 
\begin{thm}[Seidel~\cite{Sei99}] \label{thm:Seidel}
  The Lagrangian connect sums $P_1\# P_2$ and $P_2\# P_1$ are not
  Hamiltonian isotopic in $\mathbb{C}^2$ with compactly supported
  isotopies.
\end{thm}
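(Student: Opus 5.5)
The statement is Seidel's theorem, and the paper says it follows by combining Theorem 1.1 with Propositions A.2 and A.3 of \cite{Sei99}. My plan is to assemble it from those ingredients, arguing by contradiction through a Floer-theoretic or monodromy invariant that distinguishes the two surgeries.

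\emph{Step 1: relate the two surgeries by a Dehn twist.} Let $S\subset\mathbb{C}^2$ be the Lagrangian sphere given as the vanishing cycle, for example $S=\{(x,0)\,:\,|x|=1\}$ in suitable rotated coordinates, or the core of a Lawlor neck. Let $\tau_S$ be the compactly supported model Dehn twist along $S$. The local model of Seidel's Propositions A.2 and A.3 identifies the connect sums up to compactly supported Hamiltonian isotopy:
\[ P_2\# P_1 \simeq \tau_S(P_1\# P_2), \qquad \text{or equivalently} \qquad P_1\#P_2\simeq\tau_S^{\pm1}(P_1)\cup\cdots . \]
More precisely, Seidel shows that the two surgeries differ by the action of $\tau_S^{2}$, up to compactly supported Hamiltonian isotopy. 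I would check this on the explicit circle-invariant profile curves $\gamma$. The argument is as follows. The Dehn twist acts on the $\gamma$-curve picture by rotating the argument by $\pi$ near the origin. Reversing the surgery replaces $\arg\gamma\in[0,\pi/2]$ by $\arg\gamma\in[\pi/2,\pi]$, and a half-turn supplies exactly this replacement.

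\emph{Step 2: invoke the invariant.} Seidel's Theorem 1.1 says that $\tau_S^2$ is smoothly isotopic to the identity with compact support, but is not compactly supported symplectically, and hence not Hamiltonianly, isotopic to the identity. That result concerns the map $\tau_S^2$, whereas the statement here concerns Lagrangians. To pass from one to the other I would use the Floer-theoretic form of the argument: suppose $P_1\#P_2$ and $P_2\#P_1$ were compactly Hamiltonian isotopic. Then, for a fixed test Lagrangian $K$ (a cotangent fibre, i.e.\ a plane transverse to both), the graded Floer cohomology $HF(K,\cdot)$ would agree for the two surgeries.

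\emph{Step 3: compute both sides.} Wrapped, or compactly supported, Floer cohomology is computable via the long exact sequence for the Dehn twist. Both groups are one-dimensional, but they sit in degrees differing by one; this is the grading shift $[1]$ from the exact triangle $P_1\to P_2\to P_1\#P_2$ versus its reverse. The grading is well defined because the Lagrangians are graded and exact, so the Maslov class vanishes. The conclusion follows.

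The main obstacle is Step 1, namely pinning down exactly which power and sign of $\tau_S$ relates the two surgeries, and matching Seidel's conventions to the $\gamma$ conditions used here. I would resolve it with the explicit $S^1$-equivariant model in the $\gamma$-plane, where everything reduces to isotopies of curves in $\mathbb{C}\setminus\{0\}$ relative to their ends at infinity.
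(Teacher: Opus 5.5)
There is a genuine gap, and it is more basic than the power and sign you flag: the sphere you twist along does not exist. $\mathbb{C}^2$ contains no embedded Lagrangian $2$-sphere. Such a sphere would be exact because $H^1(S^2)=0$, contradicting Gromov's theorem. The sets you propose, $\{(x,0):|x|=1\}$ or the core of the Lawlor neck, are circles, hence isotropic rather than Lagrangian. So there is no Dehn twist $\tau_S$ on $\mathbb{C}^2$, and Step 1 has no content.

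The equivariant $\gamma$-picture cannot supply one either. Since $\gamma$ and $-\gamma$ give the same surface, rotating $\gamma$ by $\pi$ (globally, or only near the origin) leaves the first-quadrant surface in its own compactly supported Hamiltonian isotopy class. Rotating by $\pi/2$, i.e.\ $z\mapsto iz$, does exchange the two surgeries, but it also exchanges $P_1$ and $P_2$ and is not compactly supported. In any case an $S^1$-equivariant analysis could only exclude equivariant isotopies.

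The missing idea, which is how Theorem 1.1 and Propositions A.2, A.3 of \cite{Sei99} combine, is to leave $\mathbb{C}^2$. Take a symplectic $4$-manifold to which Seidel's theorem applies, containing Lagrangian spheres $L_1,L_2$ that meet transversely in one point, with a Darboux chart at $L_1\cap L_2$ in which they are $P_1,P_2$. A compactly supported Hamiltonian isotopy from $P_1\#P_2$ to $P_2\#P_1$, conjugated by a dilation so that it fits in the chart, gives a Hamiltonian isotopy from $L_1\#L_2$ to $L_2\#L_1$. Seidel's appendix identifies these with $\tau_{L_1}(L_2)$ and $\tau_{L_1}^{-1}(L_2)$ (up to convention). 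Hence $\tau_{L_1}^2(L_2)$ would be Lagrangian isotopic to $L_2$, which is what Seidel's theorem rules out. Your guess that the two orderings differ by a squared twist is right, but only in this global setting, where the twist is along a closed sphere and is not supported in the chart.

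The Floer-theoretic replacement in Steps 2 and 3 also fails as stated. Wrapped Floer cohomology in $\mathbb{C}^2$ vanishes identically, since it is a module over $SH^*(\mathbb{C}^2)=0$. Without a Lagrangian sphere there is also no Dehn twist exact sequence.

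With unwrapped Floer cohomology and fixed conical ends, take $K$ to be an affine Lagrangian plane meeting $P_1$ and $P_2$ transversely at single points $q_1,q_2$, outside the region where the surgeries differ from $P_1\cup P_2$. Then $CF(K,P_1\#P_2)$ and $CF(K,P_2\#P_1)$ have the same two generators in the same degrees (the gradings agree on the common ends), and differ only in their differentials. So each group has rank $0$ or $2$, and they can never be one-dimensional in degrees differing by one.

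The only possible distinction is that one differential vanishes and the other does not. The surgery exact triangle identifies the differential with the product by the intersection point $p$. This is taken in $CF^1(P_1,P_2)$ for one ordering and in $CF^1(P_2,P_1)$ for the other, so for degree reasons at most one of the differentials is nonzero. To show that one actually is, you need a nonvanishing count of holomorphic triangles with corners $q_1,p,q_2$. You also need a compactness and invariance theory for these noncompact Lagrangians under compactly supported isotopies. None of this is in the proposal, and transplanting the problem into Seidel's setting is exactly what lets the paper avoid it.
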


\subsection{The local model}\label{sec:Nkappa}
We now discuss the local model that will lead to singularity
formation. We start with an exact Lawlor neck $N\subset \mathbb{C}^2$,
asymptotic to two planes $P_1\cup P_2$. Note that up to scaling and
translation there
are two inequivalent such Lawlor necks in two dimensions,
corresponding to the connect sums $P_1\# P_2$ and $P_2 \# P_1$. We
assume that $N$ is Hamiltonian isotopic, with an isotopy converging to
the identity at infinity, to $P_1\# P_2$.

For sufficiently small $\kappa > 0$ we define a perturbation
$N_\kappa$ of $N$ as follows. We let $H$ denote a quadratic
polynomial such that $H=0$ on $P_1$, and $\Delta H = 1$ on $P_2$. For
example if $P_1$ is the $x_1x_2$-plane, and $P_2$ is the
$y_1y_2$-plane, we can let $H = \frac{1}{2}y_1^2$. Then for
small $\kappa > 0$ we define the family $N_\kappa$ by the normal
variation
\[ \frac{\partial x}{\partial \kappa} = J\nabla H. \]
This flow exists smoothly for sufficiently small $\kappa > 0$
(depending on the choice of $P_1,P_2$), defining a Lagrangian that is
Hamiltonian isotopic to $N$. We have the following.

\begin{lemma} \label{lem:Nkappa1}
  For sufficiently small $\kappa > 0$ the Lagrangian
  $N_\kappa$ satisfies:
  \begin{itemize}
  \item $N_\kappa$ is asymptotic at infinity to $P_1, P_{2,\kappa}$,
    where $P_{2,\kappa}$ is a perturbation of $P_2$. 
  \item As $\kappa \to 0$, the family $N_\kappa$ converges to $N$
    smoothly on compact sets.
  \item $N_\kappa$ is the graph of a normal vector field $v$ over $N$
    with $|x|^{-1} |v|  + |\nabla v| + |x| |\nabla^2 v| \leq C\kappa$
    for $C$ independent of $\kappa$.
  \item For any $\epsilon > 0$ there is an $R > 0$ such that for $|x|
    > R$ we have $|\theta| < \epsilon\kappa$ on the component of
    $N_\kappa$ corresponding to $P_1$, and $|\theta - \kappa| <
    \epsilon\kappa$ on the component corresponding to $P_{2,\kappa}$. 
    \end{itemize}
\end{lemma}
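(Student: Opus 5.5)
The plan is to work with the explicit Hamiltonian flow. The vector field $J\nabla H$ is the Hamiltonian vector field of the quadratic $H$, so it is \emph{linear}. Its time-$\kappa$ map is therefore a linear symplectic map $\Phi_\kappa = \exp(\kappa A)$, where $A$ is the constant matrix with $J\nabla H(x) = Ax$. For $H=\frac12 y_1^2$ we get $A(x_1,x_2,y_1,y_2) = (y_1,0,0,0)$ up to sign conventions.

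The normal variation $\partial_\kappa x = (J\nabla H)^\perp$ differs from the full ambient flow only by tangential reparametrization. Hence $N_\kappa = \Phi_\kappa(N)$ as sets, and $N_\kappa$ is Lagrangian and Hamiltonian isotopic to $N$. Because $\Phi_\kappa$ is linear and $N$ is asymptotic to $P_1\cup P_2$, the surface $N_\kappa$ is asymptotic to $\Phi_\kappa(P_1)\cup\Phi_\kappa(P_2)$. Since $H$ vanishes on $P_1$ to second order (quadratic in $y$), $\nabla H = 0$ on $P_1$, so $\Phi_\kappa(P_1)=P_1$. We set $P_{2,\kappa}=\Phi_\kappa(P_2) = e^{\kappa A}P_2$, which is a perturbation of $P_2$. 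This gives the first bullet. The second bullet follows because $\Phi_\kappa\to\mathrm{Id}$ smoothly.

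For the third bullet, we write $\Phi_\kappa(x) = x + \kappa Ax + O(\kappa^2|x|)$, so the displacement is bounded by $C\kappa|x|$ and its derivatives by $C\kappa$. To express $N_\kappa$ as a normal graph over $N$, we use that $N$ has uniformly bounded geometry at scale $|x|$: its second fundamental form is bounded by $C|x|^{-1}$, and it is asymptotically conical with decay. The inverse function theorem, applied at scale $|x|$, then converts a displacement of size $C\kappa|x|$ with gradient $C\kappa$ into a normal graph $v$. This $v$ satisfies $|v|\le C\kappa|x|$, $|\nabla v|\le C\kappa$, and $|\nabla^2 v|\le C\kappa|x|^{-1}$. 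Near the compact core, $|x|$ is bounded below by $c>0$, since we may translate so that the neck avoids the origin or simply use $1+|x|$. The stated weights then hold there by smooth dependence. The main technical point is uniformity at large $|x|$, and this follows from scale invariance: the rescalings $R^{-1}N$ converge to $P_1\cup P_2$, and $\Phi_\kappa$ commutes with scaling.

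For the fourth bullet, the angle is computed exactly on the asymptotic planes. On $P_1$, $\Phi_\kappa$ is the identity, so $\theta=\theta_{P_1}$, which we normalize to $0$ (the Lawlor neck being special Lagrangian with $\theta\equiv 0$). On $P_{2,\kappa}$ the angle is $\theta(\kappa)$, with
\[ \frac{d}{d\kappa}\Big|_{\kappa=0}\theta = \Delta_{P_2} H = 1, \]
using the standard variation formula $\partial_\kappa\theta = \Delta H$ for a Hamiltonian deformation. In fact for the linear flow, on $P_{2,\kappa}$ one has $\partial_\kappa \theta = \Delta_{P_{2,\kappa}} H = 1 + O(\kappa)$, so $\theta_{P_{2,\kappa}} = \kappa + O(\kappa^2)$. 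We take $\kappa$ small so that the $O(\kappa^2)$ term is at most $\frac{\epsilon}{2}\kappa$.

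Finally, $N_\kappa$ converges to its asymptotic planes at infinity. Scaling $N$ down by $R$ gives $C^1$ convergence to $P_1\cup P_2$ away from the origin, and $\Phi_\kappa$ commutes with scaling, so the angle on $N_\kappa$ for $|x|>R$ is within $o_R(1)\cdot\kappa$ of the planar angles. To see that the error is proportional to $\kappa$, we note that $\theta_{N_\kappa}-\theta_{N} = \int_0^\kappa \Delta_{N_s}H\,ds$, and that $\Delta_{N_s}H\to \Delta_{\text{plane}}H$ as $|x|\to\infty$ uniformly in $s$. This last uniformity is the main point to check carefully. It follows because $\Delta_{N_s}H$ depends on the tangent planes and the second fundamental form times $\nabla H$, namely $|A|\,|\nabla H|\le C|x|^{-2}\cdot|x|\to 0$ using the decay of the Lawlor neck's curvature. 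Choosing $R$ large then gives the stated bounds.
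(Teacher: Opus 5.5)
The paper states this lemma without proof, so there is no argument of the author's to compare against. Your approach is the natural one and is correct in substance: identify $N_\kappa$ with $\Phi_\kappa(N)$ for the linear symplectic map $\Phi_\kappa=e^{\kappa A}$, read off the asymptotic planes, and get uniform weighted estimates from the fact that $\Phi_\kappa$ commutes with scaling while $N$ is asymptotically conical. A few points should be made precise.

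(1) Your claim that $H$ vanishes to second order on $P_1$ holds only for $H=\tfrac12 y_1^2$. A general quadratic with $H|_{P_1}=0$ may contain $x_iy_j$ terms, so $\nabla H\neq 0$ on $P_1$. The conclusion $\Phi_\kappa(P_1)=P_1$ still holds: $dH|_{TP_1}=0$ gives $X_H\in (TP_1)^{\omega}=TP_1$ along $P_1$.

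(2) For your example, $P_{2,\kappa}$ is spanned in complex coordinates by $(-\kappa+i,0)$ and $(0,i)$, so $\theta_{P_{2,\kappa}}=\arctan\kappa$, not $\kappa$. Hence for a \emph{fixed} $\kappa$ the bound $|\theta-\kappa|<\epsilon\kappa$ near infinity fails once $\epsilon<1-\kappa^{-1}\arctan\kappa$. The fourth bullet is only true if the smallness of $\kappa$ may depend on $\epsilon$. Your step of choosing $\kappa$ so that the $O(\kappa^2)$ term is below $\tfrac{\epsilon}{2}\kappa$ does exactly this. That is the right reading, and it is how the lemma is used in Lemma~\ref{lem:NkappaG}, but you should state explicitly that this is the version you prove.

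(3) The identity $\theta_{N_\kappa}-\theta_N=\int_0^\kappa \Delta_{N_s}H\,ds$ holds along trajectories of the normal flow. Along $\Phi_s$-trajectories there is an extra term $\langle\nabla\theta, X_H^{T}\rangle=O(|x|^{-2})$. You also need the trajectories to stay in $\{|x|>R/2\}$ on the same end for $s\le\kappa$; this follows from Gr\"onwall, since the speed is at most $C|x|$.

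A cleaner route avoids the integral altogether. Since $\theta_N\equiv 0$, you have $\theta_{N_\kappa}(\Phi_\kappa p)-\theta_{P_{2,\kappa}}=g_\kappa(T_pN)-g_\kappa(P_2)$, where $g_\kappa(P)=\theta(d\Phi_\kappa P)-\theta(P)$ is smooth on the Lagrangian Grassmannian with $\|g_\kappa\|_{C^1}\le C\kappa$. This gives the bound $C\kappa\,\mathrm{dist}(T_pN,P_2)=o_R(1)\kappa$ directly. The same argument on the $P_1$-end, where $g_\kappa(P_1)=0$, gives $|\theta|<\epsilon\kappa$ there.

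Finally, near the core you need neither a translation nor the weight $1+|x|$. A Lawlor neck $zw=t$ asymptotic to planes through the origin satisfies $|x|^2\ge 2|t|$ on $N$, so the weight $|x|^{-1}$ is harmless.
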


We will need the following result about scales of $N_\kappa$ at which
its Gaussian area is close to 2.
\begin{lemma}\label{lem:NkappaG}
  For any $\epsilon > 0$ there exists $\kappa_0 > 0$ with the following
  property. Suppose that $\kappa < \kappa_0$, and that for some $x_0
  \in \mathbb{C}^2$ and $r_0 > 0$ we have
  \[ \Theta(N_\kappa, x_0, r_0) > 2 - \kappa_0. \]
  Consider the rescaled Lagrangian $L = r_0^{-1}(N_\kappa -
  x_0)$. This has the following properties.
  \begin{itemize}
      \item[(1)] For
  any $R > \epsilon$ we have that $R^{-1}L$ is an
  $\epsilon$-graph over $P_1\cup P_2$ on the annulus $A_{1,2}$.
   \item[(2)] $L$ is Hamiltonian isotopic to $P_1\# P_2$ in  $B_1$  with an isotopy
      that is within $\epsilon$ of the identity near $\partial B_1$. 
  \item[(3)] Identifying the two components of $L\setminus
    B_\epsilon(0)$
    with $P_1, P_2$, we have $|\theta| < \kappa\epsilon$
    on the $P_1$-component, and $|\theta - \kappa| < \kappa\epsilon$
    on the $P_2$-component.
    \end{itemize}
\end{lemma}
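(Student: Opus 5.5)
The plan is a compactness and contradiction argument. It rests on two facts: $N_\kappa$ converges to $N$ as $\kappa\to 0$ (Lemma~\ref{lem:Nkappa1}), and the Lawlor neck $N$ has Gaussian area close to $2$ only at scales much larger than its neck size.

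Suppose the lemma fails for some $\epsilon>0$. Then there are $\kappa_k\to 0$, points $x_k$ and radii $r_k$ with $\Theta(N_{\kappa_k},x_k,r_k)>2-1/k$, such that $L_k=r_k^{-1}(N_{\kappa_k}-x_k)$ violates one of (1), (2), (3).

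The first step is to show that $r_k\to\infty$ and $|x_k|/r_k\to 0$, i.e.\ that the neck scale is negligible compared with $r_k$.

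\begin{itemize}
\item For the exact Lawlor neck $N$, being a complex curve $zw=t$ in rotated coordinates, Huisken monotonicity is replaced by the exact monotonicity of area ratios for minimal surfaces. This gives $\Theta(N,x,r)<2$ for all $x,r$, with $\Theta(N,x,r)\to 2$ only when $r\to\infty$ and $|x|/r\to 0$.
\item Away from this regime the Gaussian density stays bounded away from $2$. Near a point $x$ with $|x|\gg 1$ at scale $r\lesssim|x|$, $N$ looks like a single plane with density at most $1$. At bounded scales, $N$ is a smooth non-flat-cone minimal surface whose density ratio is strictly less than $2$.
\item To make this uniform in $\kappa$, use the graph estimate $|x|^{-1}|v|+|\nabla v|+|x||\nabla^2 v|\le C\kappa$ from Lemma~\ref{lem:Nkappa1}. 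It shows that $N_\kappa$ is a $C^1$-small graph over $N$ with uniformly controlled area ratios, so $\Theta(N_\kappa,x,r)\le \Theta(N,x,r)+C\kappa$ up to tails.
\end{itemize}

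This step is the main obstacle. The perturbation grows linearly at infinity, since $v$ is only bounded by $C\kappa|x|$, so the Gaussian weights must be estimated carefully when $|x_k|/r_k$ is large or $r_k\to\infty$. After rescaling, $N_\kappa$ at large scales is close to $P_1\cup P_{2,\kappa}$, and the plane $P_{2,\kappa}$ is within $C\kappa$ of $P_2$. So in the rescaled picture the limit is still $P_1\cup P_2$ through the origin, with density exactly $2$ only if both planes pass through the limit center.

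Once $r_k\to\infty$ and $x_k/r_k\to 0$ are established, the rescalings $L_k$ converge smoothly on compact subsets of $\mathbb{R}^4\setminus\{0\}$ to $P_1\cup P_2$. This uses that $r_k^{-1}N$ converges to the cone $P_1\cup P_2$ with smooth graphical decay $O(|x|^{-1})$, together with the uniform graph estimate for $v$. It gives (1) directly, for all scales $R>\epsilon$ with $R$ bounded.

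For $R$ large, apply the same argument to the rescaled data. Equivalently, note that $N_\kappa$ at any scale larger than the neck is $\epsilon$-graphical over $P_1\cup P_{2,\kappa}$, with errors controlled by $\kappa$ and by $|x|^{-1}$ times the neck size. The translation $x_k/r_k$ is also small relative to $R$.

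For (2), inside $B_1$ the surface $L_k$ is a scaled copy of $N_\kappa$ with the neck region in $B_\epsilon$. Since $N_\kappa$ is Hamiltonian isotopic to $N$ via the flow of $J\nabla H$, and $N$ is isotopic to $P_1\#P_2$ by assumption, $L_k$ is isotopic to $P_1\#P_2$ in $B_1$. To make the isotopy close to the identity near $\partial B_1$, cut off the generating Hamiltonians and use graphicality over the planes near $\partial B_1$; there the isotopy can be taken to interpolate between nearby graphs.

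For (3), the angle statement follows from the last item of Lemma~\ref{lem:Nkappa1}. On $L_k\setminus B_\epsilon$ the original points satisfy $|x|\ge \epsilon r_k-|x_k|\to\infty$, hence eventually exceed any fixed $R$, so the angle bounds hold with $\epsilon\kappa$.

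Each of the three properties therefore holds for large $k$, contradicting the choice of the sequence.
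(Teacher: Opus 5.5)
Your proposal follows essentially the same route as the paper: argue by contradiction, use monotonicity to get $\Theta(N,x,r)<2$, rule out every subsequential limit of $r_k^{-1}(N_{\kappa_k}-x_k)$ other than $P_1\cup P_2$ (a translate or scaling of $N$, a single plane, the empty set, or $(P_1\cup P_2)-x$ with $x\neq 0$) to force $r_k\to\infty$ and $|x_k|/r_k\to 0$, and then read off (1)--(3) from the scale-invariant graph estimate, the Hamiltonian isotopy between $N$ and its rescalings, and the last item of Lemma~\ref{lem:Nkappa1}. One caution: the comparison $\Theta(N_\kappa,x,r)\le\Theta(N,x,r)+C\kappa$ is false when $\kappa|x|\gtrsim r$, and at scale $r\sim|x|$ the density is that of a shifted $P_1\cup P_2$ rather than at most $1$; uniformity in $\kappa$ therefore has to come from the case analysis of rescaled limits, as you indicate afterwards and as the paper does, not from that inequality.
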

In other words, if $\kappa$ is sufficiently small, then at scales
where the Gaussian area of $N_\kappa$ is close to 2, the Lagrangian
$N_\kappa$ is very close to the union of planes $P_1\cup P_2$, but
with the Lagrangian angle on the two planes differing by $\kappa$. 
\begin{proof}
  We argue by contradiction. Suppose that we have a sequence $\kappa_i
  \to 0$ and corresponding $x_i, r_i$ such that $\Theta(N_{\kappa_i},
  x_i, r_i) \to 2$. We examine the possible (subsequential)
  limits of the sequence
  $r_i^{-1}(N_{\kappa_i} - x_i)$, and claim that the only possibility
  that has Gaussian area 2 is the union $P_1\cup P_2$. Note first that
  by the monotonicity formula it follows that $\Theta(N, x, r) < 2$
  for all $x, r$, using that the tangent cone at infinity of $N$ has density
  2.

  If the $r_i$ remain uniformly bounded, then any limit of 
  $r_i^{-1}(N_{\kappa_i} - x_i)$ is either a plane (if $r_i^{-1}x_i\to
  \infty$), or a translate and
  scaling of $N$. These contradict $\Theta(N_{\kappa_i},
  x_i, r_i) \to 2$. We can therefore assume that $r_i\to\infty$. 

  We can also assume that $r_i^{-1}x_i\to 0$. Otherwise
  we would have $r_i^{-1} |x_i| \geq c_1 > 0$, in which case the sequence
  $r_i^{-1}(N_{\kappa_i} - x_i)$ would converge on compact sets to one
  of three possibilities: the empty set, a plane, or $(P_1\cup P_2)- x$ for some
  $|x| \geq c_1$. These would also contradict $\Theta(N_{\kappa_i},
  x_i, r_i) \to 2$.

  It follows therefore that $r_i \to \infty$ and $r_i^{-1}x_i \to 0$,
  in which case $r_i^{-1}(N_{\kappa_i} - x_i)$ converges to $P_1\cup
  P_2$. More precisely, by Lemma~\ref{lem:Nkappa1}, $r_i^{-1}N_{\kappa_i}$ is the graph of
  $\tilde{v}$ over $r_i^{-1}N$, where $\tilde{v}$ satisfies the same
  estimates as $v$. This, together with
  $r_i^{-1}x_i\to 0$ shows the claim (1). The claim
  (2) follows by noting that $N$ is Hamiltonian isotopic to $\lambda
  N$ for any $\lambda$, with the isotopy converging to the identity at
  infinity.
  The claim (3) follows from the last property in
  Lemma~\ref{lem:Nkappa1}. 
\end{proof}

We will work in a compact Calabi-Yau surface $(X,\omega, J, \Omega)$,
and the Lagrangian initial condition that we consider will only equal
$N_\kappa$ inside a Darboux chart. We make the following definitions
in order to keep track of how well this setup approximates the model
above.

\begin{definition}\label{defn:Darboux}
  Given $p\in X$ and $R_0 > 0$, let $B_{4R_0}$ denote the Euclidean
  $4R_0$-ball in $\mathbb{C}^2$. We say that the map $\phi:B_{4R_0}
  \to X$ is an $R_0$-Darboux chart at $p$ if the following conditions
  hold:
  \begin{enumerate}
    \item $\phi(0)=p$, and $\phi^*\omega$ agrees with the standard
      symplectic form on $\mathbb{C}^2$.
    \item The pullbacks of the complex structure and holomorphic
      volume form of $X$ satisfy
      \[  \Vert \phi^*J - J_0 \Vert_{C^3} + \Vert \phi^*\Omega -
  \Omega_0\Vert_{C^3} < R_0^{-1}, \]
where $J_0, \Omega_0$ are the standard structures on $\mathbb{C}^2$.
\item We have an isometric embedding $G: (X,g)\to\mathbb{R}^N$ such
  that $G\circ \phi$ is $R_0^{-1}$-close in $C^3$ to the map $x\mapsto
  (x,0)$, and the second fundamental form of $G(M)$ is bounded by
  $R_0^{-1}$.
  \end{enumerate}
\end{definition}

Note that for any $p\in X$ and $R_0 > 0$ we can find an $R_0$-Darboux
chart at $p$, as long as we first replace the metric $g$ on $X$ by
$R^2g$ for sufficiently large $R$. Below we will always implicitly
assume that we have scaled the metric on $X$ up in order to obtain
suitable Darboux charts.

\begin{definition}\label{defn:model2}
  Let $K_0, R_0, \kappa_0, \epsilon_0 >
  0$. We say that the Lagrangian $L_0\subset X$ is $K_0$-bounded, and
  equal to $\epsilon_0N_{\kappa_0}$ in an $R_0$-Darboux chart, if the
  following conditions hold:
  \begin{enumerate}
  \item We have an $R_0$-Darboux chart $\phi:B_{4R_0}\to X$, and
    corresponding isometric embedding $G:X\to \mathbb{R}^N$.
  \item We have area bounds $|L_0\cap B(x, r)| \leq K_0r^2$ for all $r
    > 0$ and     $x\in \mathbb{R}^N$ (the balls are Euclidean balls in
    $\mathbb{R}^N$), and Lagrangian angle bound $|\theta| < K_0$.
  \item In the Darboux chart we have $\lambda|_{\phi^{-1}(L_0)} =
    d\beta$, where $|\beta| \leq K_0(1 + |x|^2)$.
  \item We have $\phi^{-1}(L_0) = \epsilon_0N_{\kappa_0}$. 
  \end{enumerate}
\end{definition}

Throughout the paper we think of $K_0$ as a fixed large constant. The
parameter $\kappa_0$ determines how much we widen the angle between
the two asymptotic planes of the Lawlor neck, and it will be taken to
be very small. This is in contrast with Neves's construction in
\cite{Neves13}, where $\kappa_0$ only needs to be in
$(0,\pi/2)$. The parameter $\epsilon_0$ will also be chosen very
small, to ensure that the initial Lagrangian is very close to two
planes. One point to keep in mind is that the smaller $\kappa_0$ is,
the closer $N_{\kappa_0}$ is to being special Lagrangian. As
such, if we are hoping for a singularity to form at a time $t < 2$
with initial Lagrangian $\epsilon_0 N_{\kappa_0}$, then
we will need to choose $\epsilon_0$ correspondingly small too. 
Finally $R_0$ determines how well our Lagrangian in $X$ can be
approximated by the model Lagrangian in a large ball in
$\mathbb{C}^2$. 

\section{Existence of singularities}\label{sec:singexist}
In this section we show that if a Lagrangian $L \subset X$ is sufficiently close
to the model $N_\kappa$ inside a large ball, then a neck pinch singularity
must form. More precisely, we will always assume the following:
\begin{itemize}
  \item[(\dag)] The initial Lagrangian $L_0$ is $K_0$-bounded, and equals $\epsilon_0N_{\kappa_0}$ in
    an $R_0$-Darboux chart as in Definition~\ref{defn:model2}.
  \end{itemize}
We will think of $K_0$ as fixed, choose $\kappa_0$ very small, and
finally choose $\epsilon_0, R_0^{-1}$ small depending on $K_0,
\kappa_0$. 

Consider
the Lagrangian mean curvature flow $L_t$ with initial condition
$L_0$. 
We first have the following result, relying on the ideas used by
Neves~\cite{Neves13}.
\begin{prop}\label{prop:Neves}
  There exists $\kappa_0$ sufficiently small (depending
  on $K_0$) and $R_0$ large, $\epsilon_0$ small, depending on $K_0, \kappa_0$, with the
  following property. There is a time $T_0\in (0,2)$ such that one
  of the following holds:
  \begin{itemize}
    \item[(a)] either the flow starting with $L_0$ develops a singularity
      outside of the ball $B_{R_0}$ at time $T_0$,
    \item[(b)] or there is a point $(x_0, T_0)$ with $x_0\in B_{R_0}$ where the
      flow has density at least 2.
    \end{itemize}
  \end{prop}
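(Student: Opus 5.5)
Argue by contradiction: if neither (a) nor (b) occurs for any $T_0\in(0,2)$, then inside the chart the flow exists smoothly for $t\in[0,2)$ and has density $<2$ at every point of $B_{R_0}\times(0,2)$. This would make $\phi^{-1}(L_t)\cap B_{R_0}$ a Hamiltonian isotopy from $\epsilon_0 N_{\kappa_0}$ to something close to the smooth expander $\sqrt{t}\,Q$ asymptotic to $P_1\cup P_{2,\kappa_0}$. By Anciaux~\cite{Anc06} and Lotay--Neves~\cite{LN13}, that expander is the connect sum $P_2\# P_1$, and this contradicts Theorem~\ref{thm:Seidel}. This follows Neves's strategy in~\cite[Proposition 4.6]{Neves13}.

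The steps are as follows. First, use (\dag) and Lemma~\ref{lem:NkappaG} to fix $\kappa_0$, then $\epsilon_0$ small, so that on scales comparable to $1$ the initial Lagrangian is very close, as a varifold, to $P_1\cup P_{2,\kappa_0}$. Its Lagrangian angle is then within $\kappa_0\epsilon$ of $0$ and $\kappa_0$ on the two sheets away from a tiny ball, and the primitive $\beta$ is controlled by $K_0(1+|x|^2)$.

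Second, apply Huisken's monotonicity \eqref{eq:mon} with localized test functions (cutoffs times $\theta^2$ and $\beta$-type quantities, as in Neves) together with the $K_0$ area bounds. This gives, for $t\in[1,2]$, integral estimates showing that $\phi^{-1}(L_t)\cap B_{R}$, for any fixed $R$, is close in the weak sense to a self-expander asymptotic to $P_1\cup P_{2,\kappa_0}$. The far region ($|x|\sim R_0$) stays close to the two planes by pseudolocality or White's local regularity, since $R_0$ is large and no singularity forms there. In the interior, the assumption that densities stay below $2$ and that the flow is smooth lets us apply White's regularity theorem with the Gaussian density ratio close to $1$ away from the origin. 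Exactness ($\lambda=d\beta$) and the almost-constant angle force the limit to be an exact, zero-Maslov smooth expander. Uniqueness of such expanders (Lotay--Neves) identifies it as $\sqrt{t}\,Q\cong P_2\# P_1$, and smooth convergence on $B_R$ follows.

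Finally, use the smooth, embedded (density $<2$ excludes self-intersections) flow on $[0,1]$. Concatenated with a cutoff near $\partial B_R$, where both ends are graphical over the planes, it gives a compactly supported Hamiltonian isotopy in $\mathbb{C}^2$ from $P_1\# P_2$ to $P_2\# P_1$, contradicting Theorem~\ref{thm:Seidel}. Note that the flow of exact Lagrangians is Hamiltonian, because $\mathbf H=J\nabla\theta$.

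The main obstacle is the middle step: upgrading the weak, monotonicity-based closeness to a smooth expander into genuine smooth convergence on a fixed ball, uniformly as $\epsilon_0\to0$ and $R_0\to\infty$. The difficulty is that the density bound $<2$ is only qualitative. My first attempt would be a compactness argument over sequences $\epsilon_0\to0$, $R_0\to\infty$, extracting a limiting Brakke flow. This flow is a static union of planes for $t\le 0$ and becomes an expander for $t>0$. Smoothness of the limit would follow from density $<2$ combined with Neves's classification of tangent flows, since any tangent flow is a union of planes and hence has density $\ge2$ at a singular point. Smooth convergence back to the sequence then follows from White's local regularity.
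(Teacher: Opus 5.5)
Your overall architecture is the paper's. If the flow is smooth up to time $2$, then $L_1$ is close on a fixed large ball to the expander $\mathcal{S}$, which is $P_2\# P_1$ there. Theorem~\ref{thm:Seidel} then forces a non-embedded point, and that point has density at least $2$. Phrasing it by contradiction changes nothing.

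The problem is the middle step. The paper does not re-derive the convergence to $\mathcal{S}$. It invokes \cite[Theorem 4.1]{Neves13}, which needs only that the flow be smooth (possibly immersed) up to time $2$. Your re-derivation fails where you deduce smoothness of the limit Brakke flow from ``density $<2$''. Pointwise Gaussian density is only upper semicontinuous under Brakke convergence: $\Theta(\mathcal{M},X)\ge\limsup_i\Theta(\mathcal{M}_i,X_i)$. So density bounds on the approximating flows, qualitative or uniform, give no upper bound for the limit. Smooth embedded flows have density $1$ everywhere, yet their limits can be singular.

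What you can get for the limit comes from monotonicity against its initial data. Its density at $(x,t)$, $t>0$, is at most $e^{-\mathrm{dist}(x,P_1)^2/4t}+e^{-\mathrm{dist}(x,P_{2,\kappa_0})^2/4t}\le 2$. This is strict for $x\neq0$, and equality at $(0,t)$ holds exactly when the limit is the static cone $P_1\cup P_{2,\kappa_0}$ on $[0,t]$. Nothing in your sketch excludes this alternative:
\begin{itemize}
\item the cone is a stationary varifold, hence a Brakke flow;
\item being a cone, it trivially satisfies $\mathbf{H}=x^\perp/2t$;
\item it is compatible with $\beta+2t\theta=0$, since on each plane $\beta$ is spatially constant and evolves by $\partial_t\beta=-2\theta$.
\end{itemize}
The Lotay--Neves uniqueness theorem concerns smooth expanders, so it cannot be applied before this case is ruled out. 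Excluding the static non-special cone means showing that smoothness of the approximating flows up to time $2$ forces the smooth expander as the limit. This is exactly what \cite[Theorem 4.1]{Neves13} supplies, and it needs a Lagrangian-specific argument beyond what you list.

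The fix is to cite that theorem for this step, as the paper does, and use the density hypothesis only at the end to obtain embeddedness.
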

\begin{proof}
  This follows essentially from the arguments in
  \cite{Neves13}. We briefly recall the main points. Let us assume
  that the flow exists smoothly for $t\in [0,T)$. If $T < 2$, and
  the flow cannot be extended further, then a
  singularity must form somewhere, and since the flow has density at
  least 2 at any singularity, it follows that either case (a) or (b) must
  hold.

  Suppose therefore that $T \geq 2$. We will apply
  Neves~\cite[Theorem 4.1]{Neves13}. In the setting of Neves's
  work we should think of the planes $P_1, P_2$ and $\kappa_0$ as
  fixed (note also that what he calls the plane $P_2$ is $P_{2,\kappa}$ in our
  notation in Lemma~\ref{lem:Nkappa1}). For the Lagrangian planes $P_1,
  P_{2,\kappa}$ as in Lemma~\ref{lem:Nkappa1}, for small $\kappa$,
  there is a unique expander 
  $\mathcal{S}\subset\mathbb{C}^2$ asymptotic to $P_1$ and $P_{2,\kappa}$ (see
  Joyce-Lee-Tsui~\cite{JLT10} for the existence and
  Lotay-Neves~\cite{LN13} for the uniqueness). This expander is
  Hamiltonian isotopic to the connected sum $P_{2,\kappa} \# P_1$, where the
  isotopy converges to the identity at infinity.

  We now apply Neves's result~\cite[Theorem 4.1]{Neves13}. Let us
  fix a large $S_0 > 0$ and small $\nu > 0$. The flow $Q_{1,t}$ in
  Neves's work is the same as $L_t$ in our notation. The result
  implies that if for a given $\kappa_0$ we choose $R_0$
  sufficiently large, and $\epsilon_0$ sufficiently small (depending
  on $\kappa_0, K_0, \nu, S_0$), then $L_1$
  is $\nu$-close to the expander $\mathcal{S}$ in the ball $B_{S_0}$
 If $S_0$ is chosen sufficiently large, then
  inside $B_{S_0}$ the expander $\mathcal{S}$ is Hamiltonian isotopic
  to $P_2\# P_1$, where the isotopy is close to the identity near
  $\partial B_{S_0}$. It follows from Theorem~\ref{thm:Seidel}
  that the flow cannot be embedded for all times $t\in [0,1]$, since
  the initial Lagrangian $L_0$ is Hamiltonian isotopic to $P_1\#
  P_2$ inside the ball $B_{S_0}$, again with an isotopy close to the
  identity near $\partial B_{S_0}$.
  Therefore there must be an immersed point at which the density of
  the flow is at least 2. 
\end{proof}

The main new ingredient is the following.

\begin{prop}\label{prop:singexist}
  Given $K_0 > 0$, we can choose $\kappa_0$ sufficiently small, and
  then $\epsilon_0, R_0^{-1}$ small, depending on
  $K_0, \kappa_0$,  with the
  following property. Suppose that $L_0$ is as in (\dag)
  above. Then at some time $T_0 < 2$ the flow with initial condition $L_0$
  develops a singularity at a point $x_0$. If $x_0\in B_{R_0}$, then the
  singularity is a neck pinch, i.e. the tangent flow is given by the
  union of two transverse planes. 
\end{prop}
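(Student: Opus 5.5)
We argue along the lines sketched in the introduction. Let $T_{emb}$ be the supremum of times $t$ such that the flow $L_t$ is smooth on $[0,t]$ and embedded inside $B_{R_0}$. Proposition~\ref{prop:Neves} gives $T_{emb} < 2$: either a singularity forms outside $B_{R_0}$ before time $2$, or there is a point $(x_0,T_0)$ with $x_0\in B_{R_0}$ and density at least $2$. The first case already gives the existence claim, since that singularity lies outside $B_{R_0}$ and there is nothing further to prove. In the second case, pick a point $(x_0,T_{emb})$ with $x_0\in B_{R_0}$ where the Gaussian density is at least $2$; such a point exists because the flow either becomes singular there or acquires an immersed point.

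\textbf{Step 1: Gaussian area of the initial data.} Apply Huisken's monotonicity formula \eqref{eq:mon} with $f=1$. The forcing term $\nu$ is controlled by $R_0^{-1}$, and the area bounds from $K_0$ control the contribution of the region outside the Darboux chart. Together these give
\[ \Theta(L_0, x_0, \sqrt{T_{emb}}) \geq 2 - \eta, \]
where $\eta\to 0$ as $R_0\to\infty$. Since $T_{emb}<2$, the weight at scale $\sqrt{T_{emb}}$ is essentially supported in the Darboux chart, where $L_0=\epsilon_0 N_{\kappa_0}$. After rescaling by $\epsilon_0$, this says $\Theta(N_{\kappa_0}, \epsilon_0^{-1}x_0, \epsilon_0^{-1}\sqrt{T_{emb}}) > 2-\kappa_0$, once $R_0$ is large enough. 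Lemma~\ref{lem:NkappaG} then applies. It shows that $T_{emb}^{-1/2}(L_0-x_0)$ is $\epsilon$-graphical over $P_1\cup P_2$ on annuli $A_{\epsilon,\epsilon^{-1}}$, is Hamiltonian isotopic to $P_1\# P_2$ in $B_1$, and has Lagrangian angle within $\kappa_0\epsilon$ of $0$ and $\kappa_0$ on the two ends. We also need $T_{emb}$ bounded below, so that the rescaled forcing term is small. I would obtain this from the fact that $\Theta(N,\cdot,\cdot)<2$ at bounded scales, which is the argument in the proof of Lemma~\ref{lem:NkappaG}.

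\textbf{Step 2: apply Proposition~\ref{prop:LSSzunique}.} Form the rescaled flow $M_\tau$ centred at $(x_0,T_{emb})$, starting at $\tau=-\log T_{emb}$ and shifted so that this is $\tau=0$. We check the hypotheses in turn.
\begin{itemize}
\item Condition (1) follows from $R_0$ large.
\item Condition (2) follows from Step 1.
\item Condition (4) follows from the neck structure given by Lemma~\ref{lem:NkappaG}(2).
\item Condition (5) follows from exactness of $N$, using Definition~\ref{defn:model2}(3).
\item Condition (3) is the main obstacle. The excess $\mathcal{A}(M_0)$ is small: the area term is at most $\approx 0$ by Step 1 and the density bound, and the angle oscillation term is $O(\kappa_0^2)$. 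The difficulty is to bound $\mathcal{A}(M_\tau)$ from below by $-\delta$. The area part is at least $-\eta$, because the density at $(x_0,T_{emb})$ is at least $2$ and monotonicity holds up to the error from $\nu$. The angle term is nonnegative. So $\mathcal{A}(M_0)-\mathcal{A}(M_\tau) \leq C\kappa_0^2+\eta$, which is less than $\delta$ once $\kappa_0$ is chosen small depending on $\delta(\epsilon)$ and then $\epsilon_0,R_0^{-1}$ are chosen small. This ordering of constants is exactly the one in the statement.
\end{itemize}

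\textbf{Step 3: conclusion.} Proposition~\ref{prop:LSSzunique} now gives that $M_\tau$ stays $\epsilon$-graphical over $P_1\cup P_2$ for all $\tau>0$. Hence the tangent flow at $(x_0,T_{emb})$ is $P_1\cup P_2$, and by \cite{LSSz22} the flow is locally a union of two $C^1$ sheets meeting transversely at $x_0$. If the flow were smooth at $(x_0,T_{emb})$, then since it is embedded for $t<T_{emb}$, a transverse self-intersection could not appear instantaneously: a smooth flow would keep a smooth single sheet with density $1$ nearby. Therefore $(x_0,T_{emb})$ is a singular point, so $T_0=T_{emb}<2$ is the first singular time. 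Any singularity inside $B_{R_0}$ at that time is handled in the same way and is a neck pinch. If $T_0$ is instead a time at which some singularity forms outside $B_{R_0}$ first, the interior claim is vacuous.
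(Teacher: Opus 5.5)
Your proposal is correct and follows essentially the same route as the paper. You take the first time embeddedness in $B_{R_0}$ fails, use density $\geq 2$ with monotonicity and Lemma~\ref{lem:NkappaG} to make $M_0$ graphical over $P_1\cup P_2$, apply Proposition~\ref{prop:LSSzunique}, and note that a transverse self-intersection cannot appear in a smooth flow that was embedded just before. Your explicit check of hypotheses (3)--(5) of Proposition~\ref{prop:LSSzunique} is more detailed than the paper's, which does not spell these out.

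The extra lower bound on $T_{emb}$ that you propose is unnecessary. Since $T_{emb}<2$, the rescaling factor is at least $2^{-1/2}$, so the rescaled forcing term is $O(R_0^{-1})$ however small $T_{emb}$ is; moreover, $\Theta(N,\cdot,\cdot)<2$ would only give $T_{emb}\gg\epsilon_0^2$, not a uniform bound.
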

\begin{proof}
  Let $T_0$ denote the infimum of times at which the conclusion of
  Proposition~\ref{prop:Neves} holds. Then at time $T_0$ the flow either develops a
  singularity at a point $x_0\not\in B_{R_0}$, or the flow has density
  at least $2$ at $(x_0,T_0)$, with $x_0\in B_{R_0}$. We claim that in the
  latter case the flow has a neck pinch singularity at $(x_0,T_0)$, using
  the results of \cite{LSSz22}, as long as the parameters $\kappa_0$
  is sufficiently small, and $R_0$ is sufficiently large (depending on
  $\kappa_0$). The parameter $\epsilon_0$ is determined by
  Proposition~\ref{prop:Neves}.
  Note that for times $t < T_0$ the flow is embedded in $B_{R_0}$. 

  Let us consider the rescaled flow $M_\tau$ centered at $(x_0,T_0)$, translated
  in time so that the initial condition $M_0$ is a suitable scaling of
  $L_0-x_0$. That is, we
  consider $M_\tau$ defined by
  \[ \label{eq:rescaled1} M_\tau = e^{(\tau_0+\tau)/2}(L_{T_0-e^{-\tau_0-\tau}}-x_0), \]
  where $\tau_0$ is chosen so that $T_0 = e^{-\tau_0}$. We can then write
  \[ M_0 = r_0^{-1}(L_0 - x_0), \]
  where $r_0 = \sqrt{T_0}$. 

  Since the
  density of the flow at $(x_0, T_0)$ is at least 2, we have
  \[ \lim_{\tau \to \infty} \frac{1}{4\pi} \int_{M_\tau} e^{-|x|^2 /
      4}\, d\mathcal{H}^2 \geq 2. \]

  Given any $\epsilon_1 > 0$, if $R_0$ is sufficiently large (so that the
  second fundamental form of $X\subset \mathbb{R}^N$ is sufficiently small), then
  from the monotonicity formula we have that
  \[ \label{eq:L0area}\frac{1}{4\pi} \int_{M_0}  e^{-|x|^2 / 4}\,
    d\mathcal{H}^2 \geq 2 - \epsilon_1. \]
  In terms of the initial Lagrangian $L_0$ this means that
  \[ \Theta(L_0, x_0, r_0) \geq 2-\epsilon_1, \]
  and note that $r_0 \leq \sqrt{2}$, while $|x_0| \leq R_0$. Using the
  identification given by our Darboux chart, we have that $L_0 \cap
  B_{4R_0} = \epsilon_0 N_{\kappa_0}$. If $R_0$ is chosen sufficiently
  large (depending also on the area bounds given by $K_0$), we have
  \[ \Theta(\epsilon_0 N_{\kappa_0}, x_0, r_0) \geq 2 - 2\epsilon_1. \]
  By this discussion, and using Lemma~\ref{lem:NkappaG}, for any
  $\epsilon_2 > 0$ we can choose 
  $\kappa_0$ sufficiently small, and then $R_0$ sufficiently large so
  that $r_0^{-1} (\epsilon_0N_{\kappa_0} - x_0)$ is
  $\epsilon_2$-graphical over
  $P_1\cup P_2$ over the annulus $A_{\epsilon_2, \epsilon_2^{-1}}$.
  Choosing $R_0$ even larger, it follows that
  $\tilde{L}_0 = r_0^{-1}(L_0 - x_0)$ has the same graphicality property over
  $P_1\cup P_2$ on $A_{\epsilon_2, \epsilon_2^{-1}}$.

  We can now apply Proposition~\ref{prop:LSSzunique}. The conclusion is
  that given any $\delta_2 > 0$, if $\epsilon_2$ above and
  $\epsilon_1$ in \eqref{eq:L0area} are sufficiently small, then for
  all $\tau > 0$ the Lagrangian $M_\tau$ is a $\delta_2$-graph
  over $P_1\cup P_2$ on the annulus $B_2\setminus B_1$. In particular
  any subsequential limit $\lim_{\tau\to\infty} M_\tau$ is also a $\delta_2$-graph
  over $P_1\cup P_2$. These limits are the possible tangent flows of
  the flow $L_t$ at $(x_0, T)$, and so if $\delta_2$ is sufficiently
  small, the tangent flow is necessarily a transverse union of two
  planes. It follows that a neck pinch singularity must form at $(x_0,
  T)$, since if no singularity were to form, then the flow would
  already be an immersed flow for times $t < T$ close to $T$. 
\end{proof}

Using this result we can prove Theorem~\ref{thm:2} from the
introduction.
\begin{proof}[Proof of Theorem~\ref{thm:2}]
  We take $(M, g)$ to be an elliptically fibered $K3$-surface, with a
  singular fiber $C_0$ that has a node at $p$. In particular there is a
  holomorphic map $\pi:M \to \mathbb{P}^1$, with respect to a
  complex structure $I$, such that in a neighborhood of $p$, in
  suitable holomorphic coordinates $w_1,w_2$ we have $\pi(w_1,w_2) =
  w_1w_2$. The curve $C_0$ 
  is given by the fiber $\pi^{-1}(0)$ in these coordinates, and we can
  assume that $C_0$ is smooth otherwise. Let us write $C_t =
  \pi^{-1}(t)$ for small $t\in \mathbb{C}$. It follows that for
  sufficiently small $t\not=0$ the curve $C_t$ is a smooth genus one
  curve, for the complex structure $I$.
  Let $J$ be a suitable hyperk\"ahler rotated complex
  structure, so that the $C_t$ are special Lagrangian tori, converging
  to a singular special Lagrangian $C_0$ as $t\to 0$. 

  We will take the initial Lagrangian $L_0$ of the flow to be a small
  Hamiltonian perturbation of  $C_t$,
  for suitable sufficiently small $t$ and we need to ensure that $L_0$
  satisfies the hypotheses of Proposition~\ref{prop:singexist}, after
  sufficiently scaling up the metric $g$.
  We can choose holomorphic coordinates $z_k=x_k + iy_k$ 
  for $k=1,2$ centered at $p$ (with respect to $J$)
  such that the metric $g$ and the holomorphic volume form at $p$
  agree with the standard structures on $\mathbb{C}^2$. The two curves
  $w_k^{-1}(0)$ define two Lagrangian planes $P_1, P_2$ in $T_pM =
  \mathbb{C}^2$. Without loss of generality we can assume that $P_1$
  is the $x_1x_2$-plane, and
  \[ P_2=\{(e^{i\theta_1}x_1,  e^{i\theta_2}x_2)\,:\,
    x_1,x_2\in\mathbb{R}\}, \]
  where $\theta_k\in(0,\pi)$ and $\theta_1 +\theta_2 = \pi$. In other
  words $P_1$ is given by the equation $y_1 + iy_2=0$, while $P_2$ is
  defined by $x_1 - ix_2 = b(y_1 + iy_2)$, where $b=\cot\theta_1$. It follows that
  to leading order we can assume that the holomorphic coordinates
  $w_k$ for the complex structure $I$ above are given to leading order
  by
  \[ W_1 = y_1 + iy_2, \text{  and } W_2 = x_1 - ix_2 - b(y_1+iy_2).\]

  If we consider rescalings $R^2g$ of the metric $g$, and
  corresponding scalings $Rz_k, Rw_k$ of our coordinates, then as
  $R\to \infty$, the curves $C_t$ converge to the Lawlor necks
  $W_1W_2=t'$ inside $\mathbb{C}^2$. More precisely, for any $R_0, \nu
  > 0$ and $|t'| = 1$, we can find a suitable scaling $R^2g$ and
  $R_0$-Darboux chart $\phi:B_{4R_0}\to M$ centered at $p$, such that
  for some $t$, the curve $\phi^{-1}(C_t)$ is $\nu$-close in $C^3$ to the Lawlor neck
  $W_1W_2=t'$ on $B_{4R_0}$. Let us write $\tilde{N}_{t'} =
  \phi^{-1}(C_t)$ for this Lagrangian perturbation of the Lawlor
  neck. We claim that for suitable values of $t'$, $\tilde{N}_{t'}$ is
  exact in the ball $B_{4R_0}$. Note that the Lawlor neck given by
  $W_1W_2=t'$ contains a loop $\rho_{t'}$ parametrized by $W_1 = t'e^{i\theta},
  W_2 = e^{-i\theta}$, and if $\nu$ is sufficiently small, then this
  circle can be projected to a loop $\gamma_{t'}$ in
  $\tilde{N}_{t'}$. It is enough to check that for suitable $t'$ we
  have $\int_{\gamma_{t'}} \lambda = 0$ for the Liouville form
  $\lambda$. 

  We first consider the Lawlor necks given by $W_1W_2 = t'$, and the
  loop $\rho_{t'}$. Up to adding an exact form, the Liouville form
  $\lambda$ is $-2(y_1dx_1 + y_2dx_2)$. Therefore it is enough to
  consider
  \[ \int_{\rho_{t'}} y_1\,dx_1 + y_2\,dx_2. \]
  Along $\rho_{t'}$ we have
  \[ y_1 + iy_2 &= W_1 = t' e^{i\theta}, \\
    x_1 - ix_2 &= W_2 + bW_1 = e^{-i\theta} + bt' e^{i\theta}, \]
  therefore
  \[ y_1\,dx_1 + y_2\,dx_2 &= \mathrm{Re}\, (y_1+iy_2)\,d(x_1-ix_2) \\
    &= \mathrm{Re}\, (-it' + ibt'^2e^{2i\theta}) d\theta. \]
  When $t' = i$ we have
  \[ \int_{\rho_i} y_1\,dx_1 + y_2\,dx_2 = 2\pi, \]
  while at $t'=-i$ we have
  \[  \int_{\rho_{-i}} y_1\,dx_1 + y_2\,dx_2 = -2\pi. \]
  It follows that once $\nu$ above is sufficiently small, we have
  $\int_{\gamma_i} \lambda < 0$, while $\int_{\gamma_{-i}} \lambda >
  0$. Therefore by continuity there will be at least two values of
  $t'$ for which the Lagrangian $\tilde{N}_{t'}$ is exact inside the
  ball $B_{4R_0}$.

  If we take $R_0$ sufficiently large, and $\nu$
  sufficiently small, then we can take a small Hamiltonian deformation
  of $L_0$ of the corresponding $C_t$ to ensure that after a further
  scaling by $\epsilon_0$, $L_0$ satisfies
  the conditions in Definition~\ref{defn:model2}. By
  Proposition~\ref{prop:singexist} we can ensure that the flow with
  initial condition $L_0$ has a singularity at time $T < 2$. 
\end{proof}

\section{Nondegenerate neck pinches} \label{sec:nondeg}
In this section we study further properties of the singularities that
appear in Proposition~\ref{prop:singexist}, and introduce the notion
of nondegenerate neck pinches in Definition~\ref{defn:nondeg}. 

Suppose that
we are in the setting of Proposition~\ref{prop:singexist}. Thus, $L_t$ is a graded,
rational Lagrangian mean curvature flow in a compact Calabi-Yau
surface $X$, which develops a neck pinch singularity at $(x_0,
T_0)$. This means that the tangent flow at the
singularity is the special Lagrangian union $P_1' \cup P_2' $ of two
transverse planes. Note that these two planes will in general be small
perturbations of the planes $P_1\cup P_2$ used in the construction of
the Lawlor neck $N$ above. 

As in \eqref{eq:rescaled1},
we consider the rescaled flow $M_\tau$ centered at $(x_0, T_0)$,
translated in time so that $M_0$ is a rescaling of $L_0 - x_0$. From
Lemma~\ref{lem:NkappaG} we know that given any
$\epsilon_2 > 0$, we can choose $\kappa_0$ sufficiently small, and then 
$R_0$ sufficiently large (depending on $\epsilon_2, \kappa_0, K_0$) so that we
have the following:
\begin{itemize}
  \item[(a)] On the annulus $A_{\epsilon_2, R_0}$, $M_0$ is $\epsilon_2$-graphical over
    $P_1\cup P_2$. 
  \item[(b)] In the annulus $B_{R_0} \setminus B_{\epsilon_2}$ the
    Lagrangian angle on $M_0$ satisfies $|\theta|  <
    \epsilon_2$ on the component corresponding to $P_1$, and $|\theta
    - \kappa_0| < \epsilon_2\kappa_0$ on the component corresponding
    to $P_2$.
    \item[(c)] In $B_1$, $M_0$ is Hamiltonian isotopic to $P_1\# P_2$,
      with an isotopy that can be made arbitrarily close to the identity near $\partial
      B_1$ by taking $\epsilon_2$ sufficiently small. 
  \end{itemize}
Using the fact that the limit as $\tau\to \infty$ of the rescaled flow
$M_\tau$ is $P_1' \cup P_2'$, it follows from
Proposition~\ref{prop:LSSzunique} that $M_\tau$ remains close 
to the two planes $P_1\cup P_2$ for all time. More precisely, we have
the following.

\begin{prop}\label{prop:goodinitialcond}
  Let $\epsilon_3 > 0$. We can choose $\kappa_0$ sufficiently small,
  depending on $\epsilon_3$, and then $\epsilon_0, R_0^{-1}$ sufficiently small,
  depending on $\epsilon_3, \kappa_0, K_0$, such that we have the
  following. Suppose that $L_0$ is a Lagrangian as in
  Proposition~\ref{prop:singexist}, equal to $\epsilon_0 N_{\kappa_0}$
  in an $R_0$-Darboux chart, so that the flow with initial
  condition $L_0$ develops a neck pinch singularity at $(x_0, T_0)$
  for some $x_0 \in B_{R_0}$ and $T_0 < 2$. Let $M_\tau$ be
  the rescaled flow centered at $(x_0, T_0)$, so that $M_0$ is
  a scaling of $L_0 - x_0$.

  Then on the annulus $A_{\epsilon_3, \epsilon_3^{-1}}$ the
surfaces $M_\tau$ are $\epsilon_3$-graphical over $P_1\cup
P_2$ for all $\tau \geq 0$, and in addition we have
\[ \label{eq:L4bound}
  \left(\int_{M_0} \theta^4\, e^{-|x|^2/4} \, d\mathcal{H}^2\right)^{1/4} <
    C\kappa_0, \]
  for a uniform $C$ (depending only on the Gaussian area of the
  plane). Note that instead of the $L^4$-norm of $\theta$ we could
  take any other $L^p$ norm here. 
\end{prop}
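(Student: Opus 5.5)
The graphicality statement should follow from Proposition~\ref{prop:LSSzunique}, once we check its hypotheses for $M_\tau$ with $\delta$ chosen in terms of $\epsilon_3$. The $L^4$ bound should follow from the explicit geometry of $N_{\kappa_0}$ at the relevant scale, via Lemma~\ref{lem:NkappaG} and Lemma~\ref{lem:Nkappa1}, together with the $K_0$-boundedness of $L_0$ away from the Darboux chart.

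\textbf{Graphicality.} Let $\delta$ be the constant of Proposition~\ref{prop:LSSzunique} associated to $\epsilon_3$, $P_1,P_2$ and $K_0$. I check its hypotheses one by one.
\begin{enumerate}
\item The forcing term satisfies $|\nu|\le Ce^{-\tau_0}e^{-\tau}$, with $e^{-\tau_0}=T_0<2$, and the second fundamental form of $X\subset\mathbb{R}^N$ is at most $R_0^{-1}$. So $|\nu|<\delta$ once $R_0$ is large.
\item Exactly as in the proof of Proposition~\ref{prop:singexist}, the density at $(x_0,T_0)$ is at least $2$. Monotonicity then gives $\Theta(\epsilon_0N_{\kappa_0},x_0,r_0)\ge 2-2\epsilon_1$. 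Lemma~\ref{lem:NkappaG} now shows that $M_0$ is $\delta$-graphical on $A_{\delta,\delta^{-1}}$, after choosing $\kappa_0$ small and $R_0$ large; this is property (a).
\item For the excess, the Gaussian area term of $\mathcal{A}(M_0)$ is at most $2\cdot 4\pi+C\epsilon_1$ above the two-plane value. By property (b), the angle term is at most $C\kappa_0^2$ plus contributions from $|x|>R_0$, which are killed by the Gaussian weight and $|\theta|<K_0$. Monotonicity gives that the area term of $M_\tau$ is at least $2-C\epsilon_1$ (normalized), since the limit density is $\ge2$. The angle term of $M_\tau$ is nonnegative. Hence $\mathcal{A}(M_0)-\mathcal{A}(M_\tau)\le C(\epsilon_1+\kappa_0^2)<\delta$.
\item Connectedness of $M_0\cap B_1$ comes from property (c): $M_0\cap B_1$ is isotopic to the neck $P_1\#P_2$.
\item Exactness along loops comes from Definition~\ref{defn:model2}(3) pulled back through the Darboux chart, since $N_{\kappa_0}$ is exact.
\end{enumerate}
Proposition~\ref{prop:LSSzunique} then gives $\epsilon_3$-graphicality for all $\tau\ge0$.

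\textbf{The $L^4$ bound.} I split $M_0$ into three regions.
\begin{itemize}
\item On $B_{R_0}\setminus B_{\epsilon_2}$, property (b) gives $|\theta|\le 2\kappa_0$. Since the Gaussian area of $M_0$ is about $2$, this region contributes at most $C\kappa_0^4$ to $\int\theta^4e^{-|x|^2/4}$.
\item Outside $B_{R_0}$ we have $|\theta|<K_0$ and area ratio bounds, so the contribution is $\le CK_0^4e^{-R_0^2/8}$. This is below $\kappa_0^4$ once $R_0$ is large depending on $\kappa_0$.
\item Inside $B_{\epsilon_2}$ I need a pointwise bound $|\theta|\le C\kappa_0$ on $N_{\kappa_0}$ globally. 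This should follow from Lemma~\ref{lem:Nkappa1}: $N_{\kappa_0}$ is the graph of $v$ over the special Lagrangian $N$ with $|\nabla v|+|x||\nabla^2v|\le C\kappa_0$. The Lagrangian angle of a graph differs from that of the base by $O(|\nabla v|+|v||A|)$, and $|A_N|\lesssim |x|^{-1}$ at scale. This gives $|\theta-\theta_N|\le C\kappa_0$, with $\theta_N\equiv0$ after normalization. Then $B_{\epsilon_2}$ contributes at most $C\kappa_0^4\,\mathrm{area}(M_0\cap B_{\epsilon_2})$, which is bounded.
\end{itemize}
Summing gives \eqref{eq:L4bound}. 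The same splitting works for any $L^p$ norm.

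The main obstacle I expect is hypothesis (3) of Proposition~\ref{prop:LSSzunique}. The angle normalization $\inf_{\theta_0}$ and the correct constant $4\pi$ factors must be handled so that the excess drop is genuinely small, which uses the lower density bound $\ge2$ at every $\tau$.
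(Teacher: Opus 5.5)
Your proposal is correct and follows the paper's route: graphicality via Proposition~\ref{prop:LSSzunique}, and the $L^4$ bound by splitting $M_0$ at $|x|=R_0$, using property (b) inside and the $K_0$-bounds with Gaussian decay outside.

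Your extra care helps in one place. The paper invokes (b) on all of $M_0\cap B_{R_0}$ and so silently skips $B_{\epsilon_2}$, where (b) gives nothing; your pointwise bound $|\theta|\le C\kappa_0$ from Lemma~\ref{lem:Nkappa1} covers that region. One small correction in your check of hypothesis (3): the upper bound on the Gaussian area of $M_0$ should come from graphicality outside $B_{\epsilon_2}$ plus the area ratio bound inside, not from $\epsilon_1$, which only controls the lower bound.
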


\begin{proof}
  The first statement follows from Proposition~\ref{prop:LSSzunique}. For
  \eqref{eq:L4bound} we use the property (b) above. On the one hand we
  have
  \[ \int_{M_0 \cap B_{R_0}} \theta^4\, e^{-|x|^2/4}\,
    d\mathcal{H}^2 < C \kappa_0^4, \]
  while by choosing $R_0$ large enough, depending on $\kappa_0, K_0$
  we can arrange that
  \[ \int_{M_0 \setminus B_{R_0}} \theta^4\, e^{-|x|^2/4}\,
    d\mathcal{H}^2 < C\kappa_0^4. \]
\end{proof}

Recall that
the Lagrangian angle $\theta$ satisfies the drift heat equation
\[ \partial_\tau \theta = \Delta\theta - \frac{1}{2} x\cdot
  \nabla\theta \]
along the rescaled flow. Note that we are viewing $X\subset
\mathbb{R}^N$ as being isometrically embedded, so that $M_\tau
\subset e^{(\tau+c)/2}X$, where the choice of $c$ is given by
our time translation of the rescaled flow. Moreover the rescaled flow 
$M_\tau$ evolved in a normal direction,
as immersions satisfying
\[ \label{eq:RMCFeqn} \frac{\partial x}{\partial \tau} = \mathbf{H} + \frac{x^\perp}{2}
  + \nu,\]
for a forcing term $\nu$ given in terms of the second
fundamental form of $e^{(\tau + c)/2}X$. Choosing $R_0$ large, we have $|\nu| \leq
e^{-(\tau+c)/2}$, and so from the monotonicity formula and \eqref{eq:L4bound}
we have
\[ \int_{M_\tau} \theta^4\, e^{-|x|^2/4} \, d\mathcal{H}^2 <
  2C\kappa_0^2, \]
for a uniform $C > 0$ for all $\tau > 0$.

Using Ecker's log Sobolev inequality~\cite{Ecker} we have the
following more refined estimates for subsolutions of the drift heat equation,
similar to \cite[Lemma 3.5]{LSSz22}. 

\begin{prop}\label{prop:Ecker}
  Suppose that $u(x,\tau) \geq 0$ is defined along the flow $M_\tau$,
  has polynomial growth, and satisfies
  \[ \label{eq:subsol1} \frac{\partial u}{\partial \tau} \leq \Delta u - \frac{1}{2}
   x\cdot \nabla u. \]
  Suppose that $R_0$ is chosen sufficiently large, as above, so that
  $|\nu|\leq e^{-\tau/2}$ along the flow. We have the following.
  \begin{itemize}
  \item[(a)] Letting $p(\tau) = 1 + e^\tau$, we have
    \[ \left( \int_{M_\tau} u^{p(\tau)}\, e^{-|x|^2/4}\,
        d\mathcal{H}^2 \right)^{1/p(\tau)} \leq C \left(
        \int_{M_0} u^2\, e^{-|x|^2/4}\, 
        d\mathcal{H}^2 \right)^{1/2}, \]
    where $C > 0$ is a universal constant.
  \item[(b)] For $0 < \delta \leq s \leq 1$, there are $p > 1$ and $C
    > 0$, depending on $\delta$, such that on $M_s$ we have
    \[ u(x, s)^2 \leq C
      e^{\frac{|x|^2}{4p}}\int_{M_0} u(x,
      0)^2\, e^{-|x|^2/4}\, d\mathcal{H}^2. \]
  \end{itemize}
\end{prop}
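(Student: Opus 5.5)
The plan is to combine the differential inequality for $\int u^{p(\tau)}$ along the rescaled flow with Ecker's log-Sobolev inequality, and then to obtain pointwise bounds from the mean value inequality. This follows the scheme of \cite[Lemma 3.5]{LSSz22}.

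\emph{Part (a).} Write $d\mu = e^{-|x|^2/4}\,d\mathcal{H}^2$ and
\[ F(\tau) = \log \Big( \int_{M_\tau} u^{p}\, d\mu \Big)^{1/p}, \qquad p = p(\tau). \]
First I differentiate $\int_{M_\tau} u^p\,d\mu$ using \eqref{eq:RMCFeqn}. The weighted area element evolves by
\[ -\Big|\mathbf{H} + \tfrac{x^\perp}{2}\Big|^2 + \text{(terms involving } \nu\text{)}, \]
and the $\nu$-terms are controlled by $|\nu|\le e^{-\tau/2}$ after completing the square, as in \eqref{eq:mon}. The drift Laplacian integrates by parts against $d\mu$, so \eqref{eq:subsol1} gives
\[ \frac{d}{d\tau}\int u^p\,d\mu \le -\frac{4(p-1)}{p}\int |\nabla u^{p/2}|^2\,d\mu + p'\int u^p\log u\,d\mu + C e^{-\tau}\int u^p\,d\mu. \]
Next I apply Ecker's log-Sobolev inequality~\cite{Ecker} to $f = u^{p/2}$. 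On a surface with drift weight it reads
\[ \int f^2\log\frac{f^2}{\|f\|^2}\,d\mu \le 4\int |\nabla f|^2\,d\mu + \int |\mathbf{H} + x^\perp/2|^2 f^2\,d\mu + C\|f\|^2 , \]
with $\|f\|^2 = \int f^2\,d\mu$. The curvature term is absorbed by the good term coming from the evolution of the area. The choice $p' = p-1$ makes the gradient terms cancel, which is exactly $p = 1 + e^\tau$. This yields a bound of the form
\[ \frac{d}{d\tau}F \le C\,\frac{p'}{p^2} + Ce^{-\tau}. \]
Both terms are integrable on $[0,\infty)$, so $F(\tau) \le F(0) + C$, which is (a) since $p(0)=2$.

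The main obstacle is this log-Sobolev step. The Ecker inequality holds for the Gaussian-weighted measure on $M_\tau$, which is not a self-shrinker and sits in $e^{\tau/2}X$ rather than $\mathbb{C}^2$. The error terms (the mean curvature term and the $\nu$ contributions) must therefore be absorbed exactly by the evolution of the area. I would handle this by adding the area evolution term to Ecker's inequality and checking that the coefficients match. The constants must be uniform, and the area ratio bounds from $K_0$ keep the normalizing constants controlled. Polynomial growth of $u$ justifies the integrations by parts.

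\emph{Part (b).} For fixed $s\in[\delta,1]$, part (a) gives an $L^{p(s/2)}$ bound at time $s/2$ with $p(s/2) \ge 1 + e^{\delta/2} > 2$. To get pointwise bounds I apply a local mean value inequality for subsolutions of the drift heat equation on the time interval $[s/2,s]$. At scale comparable to $\sqrt{\delta}$ near $x$ the drift is of size $|x|$, so I would pass back to the unrescaled mean curvature flow, where $u$ is a heat subsolution, and use Huisken monotonicity with a backward heat kernel centered at $(x,s)$. This bounds $u(x,s)^q$ by a Gaussian integral of $u^q$ at the earlier time; I take $q = p(s/2)/2 > 1$.

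Hölder's inequality then compares that Gaussian kernel with $e^{-|y|^2/4}$. The exponents combine so that the weight mismatch costs a factor $e^{c|x|^2}$, which after taking roots becomes $e^{|x|^2/(4p)}$ for some $p > 1$ depending on $\delta$. Squaring and using (a) gives (b).
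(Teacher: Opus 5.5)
Your proposal is correct and follows the route the paper indicates. Part (a) inserts Ecker's log-Sobolev inequality into Gross's hypercontractivity computation for $\int u^{p(\tau)}\,d\mu$, as in \cite[Lemma 3.5]{LSSz22}, with $p'=p-1$ cancelling the gradient terms; part (b) combines a monotonicity-formula mean value bound with H\"older's inequality.

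One bookkeeping point. With curvature coefficient $1$ in Ecker's inequality, the absorption margin is only $1-p'/p=1/p$. Completing the square against the good term $-\int|\mathbf{H}+x^\perp/2+\nu/2|^2u^p\,d\mu$ therefore costs an error of order $p|\nu|^2\int u^p\,d\mu$, not $e^{-\tau}\int u^p\,d\mu$. After dividing by $p\int u^p\,d\mu$ this is still at most $Ce^{-\tau}$, so your bound on $F'$ and the conclusion stand.
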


Using that $M_\tau$ converges to the union $P_1'\cup P_2'$ as
$\tau\to \infty$, together with the fact that by the monotonicity
formula $\theta$ remains uniformly bounded, we can extract a suitable
normalized limiting solution of the drift heat equation on $P_1'\cup
P_2'$. More precisely, we consider a sequence $\tau_i\to \infty$, and
let $L^i_\tau = M_{\tau-\tau_i}$ be the corresponding
translated flows. Let $\underline\theta_i$ be the average of $\theta$ on
$L^i_0$ with respect to the Gaussian area, i.e. 
\[ \int_{L^i_0} (\theta - \underline\theta_i)\, e^{-|x|^2/4}\,
  d\mathcal{H}^2 =0, \]
and let $\theta_i$ denote the normalized angle function
\[ \label{eq:tnorm1} \theta_i = \frac{ \theta - \underline\theta_i }{ \left(\int_{L^i_0}
      (\theta-\underline\theta_i)^2\, e^{-|x|^2/4}\,
      d\mathcal{H}^2\right)^{1/2}}. \]
These satisfy the drift heat equation along $L^i_\tau$, for $\tau\in
[-1,2]$, say. Using the monotonicity formula, we have uniform bounds
for the $\theta_i$ on any compact subset of $L^i_\tau$ for $\tau >
0$. It follows that as $i\to\infty$, we can extract a limit solution $\theta_\infty$ of
the drift heat equation on $P_1'\cup P_2'$, for $\tau\in (0,2]$. This
means that $\theta_\infty$ is given by a pair of solutions
$(\theta_{\infty, 1}, \theta_{\infty,2})$ on the planes $P_1', P_2'$. Note
that a priori these solutions could be identically zero. We define
nondegenerate neck pinches in terms of this limit solution.

\begin{definition}\label{defn:nondeg}
  Suppose that the flow $L_t$ develops a neck pinch singularity at
  $(x_0, T_0)$, with tangent flow $P_1\cup P_2$. Let $(\theta_{\infty,
    1}, \theta_{\infty, 2})$ denote the normalized limit of the
  Lagrangian angle on the two planes. We say that the neck pinch is
  \emph{nondegenerate} if
  \[ (\theta_{\infty, 1}, \theta_{\infty, 2}) = ( - (8\pi)^{-1/2},
    (8\pi)^{-1/2}), \]
  and in addition in a suitable Darboux chart near the
  singularity the Lagrangian $L_t$ for $t < 0$ sufficiently close to
  $0$ is Hamiltonian isotopic to $P_1 \# P_2$, with an isotopy that is
  close to the identity near the boundary of the chart.
\end{definition}

Let us recall that homogeneous solutions of the drift heat equation on
$\mathbb{R}^2$ correspond to eigenfunctions of the operator $-\Delta +
\frac{1}{2} x\cdot \nabla$. The eigenfunctions with eigenvalue $k/2$
are homogeneous polynomials of degree $k$ given by products of Hermite
polynomials (see Bogachev~\cite[Chapter 1]{Bog98}). The 
solution of the drift heat equation corresponding to the eigenfunction
$f(x)$ is $e^{-k\tau/2}f(x)$. In particular the only homogeneous solution that
does not decay is the constant. Because of this, it is natural to
expect that in the most nondegenerate settings the limit solution
$\theta_\infty$ constructed above is given by $(c, -c)$ on the two
planes for a suitable nonzero constant $c$, determined by the
$L^2$-normalization to be $\pm (8\pi)^{-1/2}$. This motivates the
definition of nondegenerate neck pinches above, with the added
requirement that in the connect sum decomposition $P_1\# P_2$ the
limiting solution is negative on the first summand.

Our next goal is to show that in the setting of
Proposition~\ref{prop:singexist}, if the parameters $\kappa_0, R_0,
\epsilon_0$ are chosen suitably, then the neck pinch singularity that
appears is nondegenerate. For this we next recall a version of
the three annulus lemma.

For any function $u(x, \tau)$ defined along the rescaled flow $M_\tau$ we
define the norms
\[ \Vert u\Vert_\tau^2 = \int_{M_\tau} u(x, \tau)^2\,
  e^{-|x|^2/4}\, d\mathcal{H}^2. \]
In addition let us define the averages $\underline{u}_\tau$ of $u$ on
$M_\tau$.
We have the following, whose proof is very similar to
\cite[Proposition 3.9]{LSSzAncient}. 

\begin{lemma}\label{lem:3ann} 
  Let $K\subset \mathbb{R}\setminus \mathbb{Z}$ be compact. There is
  an $\epsilon > 0$, depending on $K$, such that if $s\in K$, then
  we have the following. Suppose that $M_\tau$ is a (forced) rescaled
  mean curvature flow satisfying \eqref{eq:RMCFeqn} for $\tau\in [0,2]$ with a forcing term $|\nu| <
  \epsilon$, and $M_\tau$ is $\epsilon$-graphical over
  $P_1\cup P_2$ on the annulus $A_{\epsilon, \epsilon^{-1}}$. Suppose
  in addition that $u$ solves the drift heat
  equation along $M_\tau$, and has polynomial growth. If  
  \[ \label{eq:3ann1} \Vert u - \underline{u}_1\Vert_{1} \geq e^{s/2}
    \Vert u - \underline{u}_{0}\Vert_{0}, \]
  then  
  \[ \label{eq:3ann2} \Vert u - \underline{u}_{2}\Vert_{2} \geq
    e^{s/2} \Vert u - \underline{u}_1\Vert_{1}. \] 
\end{lemma}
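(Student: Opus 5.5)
We argue by contradiction and compactness, following the model case on the static union $P_1\cup P_2$.

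\emph{Model case.} On $P_1\cup P_2$ with $\nu=0$, a solution $u$ of the drift heat equation decomposes into Hermite eigenfunctions on each plane. The eigenvalues are $k/2$ with $k\in\mathbb{Z}_{\geq 0}$, so the modes decay like $e^{-k\tau/2}$.

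Subtracting the average removes only the one-dimensional component along the global constant $(1,1)$. The remaining functions $(c,-c)$, with $k=0$, are still present in $u-\underline{u}_\tau$. Since the flow is static, the average is a fixed projection, and $u-\underline{u}_\tau$ is again a solution.

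Write $u-\underline{u}=\sum_k a_k e^{-k\tau/2}\phi_k$ with $\phi_k$ orthonormal in the Gaussian $L^2$. Then
\[ \Vert u-\underline u_\tau\Vert_\tau^2 = \sum_k a_k^2 e^{-k\tau} =: f(\tau). \]
By Cauchy--Schwarz, $\log f$ is convex in $\tau$, so $f(2)f(0)\geq f(1)^2$.

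Now suppose $f(1)\geq e^{s}f(0)$ with $s\notin\mathbb{Z}$. Convexity alone gives only $f(2)\geq e^s f(1)$, which is not strict, and strictness is what feeds the compactness argument. To get room I would use the gap from the integers. Let $s\in(-(m+1),-m)$ with $m\geq 0$ an integer; for $s>0$ the hypothesis $f(1)\geq e^sf(0)$ is impossible, since $f$ is nonincreasing. The condition $f(1)\geq e^sf(0)$ forces the mass to concentrate on the modes $k\leq m$. Quantitatively, the fraction of mass on $k\geq m+1$ is bounded in terms of $\operatorname{dist}(s,\mathbb{Z})$, and then $f(2)\geq e^{s'}f(1)$ for some $s'>s$ depending only on $K$.

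So in the model case one gets the strict conclusion
\[ \Vert u-\underline u_2\Vert_2\geq e^{s'/2}\Vert u-\underline u_1\Vert_1, \qquad s'>s. \]

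\emph{Compactness.} Suppose the lemma fails. Then there are $\epsilon_i\to 0$, $s_i\in K$, flows $M^i_\tau$ that are $\epsilon_i$-graphical with forcing $|\nu|<\epsilon_i$, and solutions $u_i$ satisfying \eqref{eq:3ann1} but violating \eqref{eq:3ann2}.

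Normalize so that $\Vert u_i-\underline{u}_{i,1}\Vert_1=1$. Then $\Vert u_i-\underline{u}_{i,0}\Vert_0\leq e^{-s_i/2}$ is bounded, and replacing $u_i$ by $u_i-\underline{u}_{i,0}$ does not change either side. Proposition~\ref{prop:Ecker}(b), applied to $u_i^2$ (or to $u_i^\pm$), gives pointwise bounds $u_i^2\leq Ce^{|x|^2/4p}$ for $\tau\in[\delta,2]$. Combined with local parabolic estimates on the graphical region, this yields convergence on compact subsets of $\mathbb{R}^N\setminus\{0\}$ to a solution $u_\infty$ on $P_1\cup P_2$ for $\tau\in(0,2]$, with $s_i\to s\in K$.

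\emph{Main obstacle.} The main obstacle is passing the norms to the limit. This needs three things.
\begin{itemize}
\item No mass near the origin. I would use the higher integrability from Proposition~\ref{prop:Ecker}(a), an $L^{p}$ bound with $p>2$ at $\tau\geq\delta$, together with the area bound of $M^i_\tau\cap B_\epsilon$. This makes the contribution of $B_{\epsilon}$ small.
\item No mass at infinity. The weight $e^{-|x|^2/4}$ against the growth $e^{|x|^2/4p}$ with $p>1$ gives uniform tails.
\item Starting at $\tau=\delta$ instead of $0$. I would apply the argument on the interval $[\delta,2]$, using $\Vert\cdot\Vert_\delta$, which is bounded by the $\tau=0$ norm up to $1+o(1)$ by the monotonicity formula. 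Alternatively, rescale time slightly using the openness of the condition on $s$.
\end{itemize}

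Granting this, the limit satisfies $\Vert u_\infty-\underline u_1\Vert_1=1$ and $\Vert u_\infty-\underline u_2\Vert_2\leq e^{s/2}$. Also, by monotonicity in the limit, $\Vert u_\infty-\underline{u}_\delta\Vert_\delta\leq e^{-s/2}$ up to errors vanishing as $\delta\to 0$. The model case with strict gap $s'>s$ then gives a contradiction. Here one uses that the gap $s'-s$ is uniform on $K$, and that the averages converge because the Gaussian areas converge to $2\int_{\mathbb{R}^2}e^{-|x|^2/4}$.
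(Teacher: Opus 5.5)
Your proposal is correct and is essentially the paper's proof. It uses the same contradiction and compactness setup and the same normalization, passes to a limit on $P_1\cup P_2$ via the monotonicity formula and Proposition~\ref{prop:Ecker}, and then uses the Hermite decomposition with $f(1)=1$, $f(2)\le e^{s}$ and $f(0^+)\le e^{-s}$; note that Proposition~\ref{prop:Ecker} should be applied to $|u_i|$ or $u_i^{\pm}$, not to $u_i^2$, since only an $L^2$ bound is available at $\tau=0$. The one difference is the last step: the paper uses the equality case of $f(1)^2\le f(0)f(2)$, which forces a single mode with $k=-s\in\mathbb{Z}$, whereas you claim a strict gap $s'>s$, which is true (by Jensen for the piecewise linear interpolant of $1/x$ on $\{e^{k}\}_{k\ge 0}$) but does not follow from the mass-fraction bound on modes $k\ge m+1$ that you cite, and is unnecessary given the equality case.
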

\begin{proof}
  Let us briefly sketch the proof. Suppose that we have a sequence of such flows
  $M^i_\tau$, converging to the union $P_1\cup P_2$ on compact
  subsets of $[0, 2] \times \mathbb{C}^2\setminus \{0\}$, and
  corresponding solutions $u^i$ satisfying \eqref{eq:3ann1} but not
  \eqref{eq:3ann2}, for numbers $s_i \in K$ in place of $s$.
  Without loss of generality we can assume that
  $\underline{u}^i_{0} = 0$, and $\Vert u^i -
  \underline{u}^i_{1}\Vert_1 = 1$. Then the hypotheses imply that
  \[ \Vert u^i\Vert_{0} \leq e^{-s_i/2}, \]
  and
  \[ \Vert u^i - \underline{u}^i_{2}\Vert_{2} \leq e^{s_i/2}. \]
  Up to choosing a subsequence we can assume that the $u^i$ converge
  smoothly on compact subsets of $(0, 2] \times
  \mathbb{C}^2\setminus \{0\}$ to a solution $u$ of the drift heat
  equation on $P_1\cup P_2$, and $s_i\to s\not\in\mathbb{Z}$.
  From the monotonicity formula, together
  with  Proposition~\ref{prop:Ecker}, as in \cite[Lemma 3.8]{LSSzAncient}, we find that
  \[ \Vert u - \underline{u}_1\Vert_1 = 1, \qquad \Vert u -
    \underline{u}_{2}\Vert_{2} \leq e^{s/2}, \]
  as well as
  \[ \lim_{\tau\to 0} \Vert u\Vert_{\tau} \leq e^{-s/2}. \]
  On $P_1\cup P_2$ the average of the solution $u_\tau$ is independent of
  $\tau$, so $\underline{u}_1 = \underline{u}_{2}$, and in
  particular the solution $v$ defined by $v = u - \underline{u}_1$ is
  $L^2$ orthogonal to the constants at all times, and
  satisfies
  \[ \Vert v\Vert_1 = 1, \qquad \Vert v\Vert_{2} \leq e^{s/2}, \]
  as well as
  \[ \lim_{\tau \to 0} \Vert v \Vert_\tau \leq e^{-s/2}. \]
  This implies that $v$ is a homogeneous solution of degree $s/2$,
  which is a contradiction if $s\not\in \mathbb{Z}$. 
\end{proof}

Let us return to the setting of
Proposition~\ref{prop:goodinitialcond}. 
We show that for suitable initial Lagrangians $M_0$,
the normalized angle function can only decay slowly. 

\begin{prop}\label{prop:slowdecay}
   Let $s > 0$. If $\kappa_0$ is sufficiently small, depending
   on $s$, and $\epsilon_0, R_0^{-1}$ are small, depending on $s,
   \kappa_0$, then 
   along the rescaled flow $M_\tau$ as above, we have
   \[ \Vert \theta - \underline{\theta}_1 \Vert_1 \geq e^{-s} \Vert
     \theta -\underline{\theta}_0 \Vert_0. \]
   In addition on the two components of $M_\tau\cap
   (B_2\setminus B_1)$ corresponding to $P_1$, $P_2$, for $\tau\in
   [0,1]$, we have
   $|\theta| < s \kappa_0$ on the $P_1$-component and $|\theta -
   \kappa_0| < s \kappa_0$ on the $P_2$-component. 
 \end{prop}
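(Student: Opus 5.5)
We argue by compactness and contradiction, comparing the flow on $\tau\in[0,1]$ with the static union $P_1\cup P_2$, on which the angle data is almost a step function. Suppose the statement fails for some fixed $s>0$. Then there are sequences $\kappa_i\to 0$, and $\epsilon_{0,i},R_{0,i}^{-1}\to 0$ chosen depending on $\kappa_i$, together with rescaled flows $M^i_\tau$ for which one of the two conclusions fails.

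Set $u^i=\theta/\kappa_i$ along $M^i_\tau$. By property (b) before Proposition~\ref{prop:goodinitialcond}, on $M^i_0\cap A_{\epsilon_2,R_0}$ we have $|u^i|<\epsilon_2$ on the $P_1$-component and $|u^i-1|<\epsilon_2$ on the $P_2$-component. The bound \eqref{eq:L4bound} gives $\Vert u^i\Vert_{L^4(M_0^i)}\le C$. Since $\epsilon_2$ may be taken to tend to zero along the sequence, $u^i(\cdot,0)\to \chi:=(0,1)$ on $P_1\cup P_2$ in $L^2$ with the Gaussian weight. Here the $L^4$ bound controls the contribution of the small ball $B_{\epsilon_2}$, where $M_0$ is not graphical, via H\"older and the area bounds. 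By Proposition~\ref{prop:goodinitialcond}, the flows $M^i_\tau$ are $\epsilon_3$-graphical on $A_{\epsilon_3,\epsilon_3^{-1}}$ for all $\tau\ge0$, with $\epsilon_3\to0$. They therefore converge on compact subsets of $[0,1]\times(\mathbb{C}^2\setminus\{0\})$ to the static union $P_1\cup P_2$.

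Next we pass to the limit in the equation. Both $(u^i)^2$ and $|u^i|$ are subsolutions of \eqref{eq:subsol1}. Proposition~\ref{prop:Ecker}(b) gives uniform local bounds on $u^i$ for $\tau\in[\delta,1]$, and parabolic estimates give smooth convergence of $u^i$ on compact subsets of $(0,1]\times(P_1\cup P_2\setminus\{0\})$ to a solution $u$ of the drift heat equation on each plane. The key point is to identify the initial data of $u$ as $\chi$. We use the monotonicity formula \eqref{eq:mon} with $f=(u^i-\phi)^2$, where $\phi$ is a smooth cutoff extension of the limit, to get $\lim_{\tau\to0}\Vert u-\chi\Vert_\tau=0$, as in \cite[Lemma 3.8]{LSSzAncient}. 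The error near the origin is handled by Proposition~\ref{prop:Ecker}(a) together with the $L^4$ bound, which gives equi-integrability of $(u^i)^2$ near $0$ and at infinity.

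On a plane, a solution with constant initial data stays constant. Hence $u\equiv(0,1)$ for $\tau\in(0,1]$. This gives the pointwise conclusion on $B_2\setminus B_1$ for $\tau\in[\delta,1]$, since the convergence is smooth there. For $\tau\in[0,\delta]$ we instead use continuity from the initial data: local parabolic estimates on the graphical annulus, with data $\epsilon_2$-close to $(0,1)$, give the bound up to $\tau=0$. This contradicts the failure of the second conclusion.

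For the first conclusion, the normalized quantities converge as well. We have
\[ \kappa_i^{-1}\Vert\theta-\underline\theta_0\Vert_0\to\Vert\chi-\underline\chi\Vert=(8\pi)^{1/2}\cdot\tfrac12\cdot 2>0 \]
(any positive value suffices), and $\kappa_i^{-1}\Vert\theta-\underline\theta_1\Vert_1$ tends to the same value. Both convergences follow from the $L^2$ convergence justified by the $L^4$ and Ecker bounds. So the ratio tends to $1>e^{-s}$, which is a contradiction.

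The main obstacle is the $L^2$ convergence up to $\tau=0$ and at $\tau=1$, meaning that no $L^2$ mass of $u^i$ is lost near the origin, where the neck sits, or at infinity. This is where \eqref{eq:L4bound}, Proposition~\ref{prop:Ecker} and the monotonicity formula must be combined carefully.
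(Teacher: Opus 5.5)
Your proposal is correct and uses the same compactness scheme as the paper: sequences $\kappa_{0,i}\to0$, normalized angles $\kappa_{0,i}^{-1}\theta^{(i)}$, and a limit pair of solutions on the static $P_1\cup P_2$ that must be $(0,1)$. Where you differ is in how you identify that limit.

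The paper's key step is a pointwise short-time bound that is uniform in $i$. It applies the monotonicity formula to $f=\max\{\kappa_{0,i}^{-1}\theta,0\}$ with the backward heat kernel $\rho_{x_1,t_1}$ centered at the target point, which gives $f(x_1,t_1)\le e^{1/i}\int_{L^i_{-1}}f\,\rho_{x_1,t_1}$. It then splits this integral into two pieces. On $B_{c|x_1|}(x_1)$ the initial angle is small by (b). On the complement $\rho_{x_1,t_1}/\rho_{0,0}$ is small and the weighted $L^4$ bound controls $f$. The result is $|\theta^{(i)}|<\delta\kappa_{0,i}$ for $\tau<\tau_0$ on compact sets away from $0$. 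That single estimate does two jobs: together with the $L^4$ tail and monotonicity on the static plane it forces $u_1=0$, $u_2=1$, and it gives the second conclusion near $\tau=0$ directly. It needs no cutoff near the neck.

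You instead prove weak $L^2$ attainment of the initial data through the energy inequality for $(u^i-\phi)^2$. This works. The $\mathbf{H}$-terms cancel in $(\partial_\tau-\Delta+\tfrac12 x\cdot\nabla)\phi$. The cutoff error near the neck is $O(\tau r^{-1/2})$, since the Hessian of $\phi$ has size $r^{-2}$ and $\int_{M\cap B_{2r}}|u^i-\phi|\lesssim r^{3/2}$ by the $L^4$ and area bounds. So you let $\tau\to0$ before $r\to0$. Your route also makes explicit the $L^2$ convergence at $\tau=0$ and $\tau=1$ that the ratio inequality needs, which the paper leaves implicit.

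The cost is that the pointwise bound on $[0,\delta]$ becomes a separate step. There, your ``local parabolic estimates'' must use only integral control on the lateral region. No uniform sup bound on $\kappa_{0,i}^{-1}\theta$ is available near $\tau=0$: Proposition~\ref{prop:Ecker}(b) degenerates as $\delta\to0$, and $|\theta|\le K_0$ only gives $K_0/\kappa_{0,i}$. A Moser bound does supply this step: apply it to $(|u^i-\chi|-\epsilon_2)_+$, extended by zero to $\tau<0$ and controlled by the uniform $L^4$ bound. The paper's heat-kernel splitting is the global replacement for that step.

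One harmless slip: $\Vert\chi-\underline\chi\Vert=\tfrac12(8\pi)^{1/2}$, not $(8\pi)^{1/2}$, though as you note only positivity matters.
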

 \begin{proof}
   To show this, we consider sequences $\kappa_{0,i}\to 0$, $R_{0,i}\to
   \infty$, $\epsilon_{0,i}\to 0$, such that the corresponding
   rescaled flows $M^{i}_{ \tau}$ constructed as above, satisfy
   the following:
   \begin{itemize}
     \item[(a)] On the annulus $B_{R_{0,i}} \setminus B_{1/i}$, for
       $\tau\in [0,1]$ the $M^i_{ \tau}$ are
       $i^{-2}$-graphical over $P_1\cup P_2$. Let us write
       $M^i_{\tau, 1}$, $M^i_{\tau,2}$ for the two
       corresponding components of $M^i_\tau$ inside this
       annulus. 
     \item[(b)] 
       On the component $M^i_{0,1}$ we have $|\theta| < i^{-1}
       \kappa_{0,i}$, while on $M^i_{0,2}$ we have $|\theta -
       \kappa_{0,i}| < i^{-1}\kappa_{0,i}$.
     \item[(c)]
       By choosing $R_{0,i}$ sufficiently large, we have the $L^4$ bound
       \[ \left( \int_{M^i_\tau} \theta^4\, e^{-|x|^2/4}\,
           d\mathcal{H}^2\right)^{1/4} < C\kappa_{0,i}, \]
       for $\tau=0$, which implies the same type of bound for all
       $\tau >0$ by the monotonicity formula.
     \item[(d)] The forcing term $\nu$ along the mean curvature flow
       satisfies $|\nu|^2  < i^{-1}$. 
       \end{itemize}

     The sequence of flows $M^i_\tau$ converges smoothly on
     compact subsets of $(\mathbb{C}^2 \setminus \{0\}) \times
     [0,\infty)$ to the static flow $P_1\cup P_2$ as $i\to
     \infty$. Let us write $\theta^{(i)}$ for the Lagrangian angle
     along $M^i_\tau$. By the uniform $L^4$ bound above, the normalized
     functions $\kappa_{0,i}^{-1} \theta^{(i)}$ converge smoothly on
         compact subsets of $(\mathbb{C}^2\setminus\{0\})\times
         (0,\infty)$ to a limit function $\theta^{(\infty)}$, which is
         a pair of solutions of the drift heat equation on the planes
         $P_1, P_2$.

         \bigskip
         \noindent {\bf Claim:} 
      We claim the following. given $\delta > 0$ and a compact set
      $K\subset (\mathbb{C}^2\setminus \{0\})$, there is a $\tau_0 >
      0$ such that for sufficiently large $i$ we have
      \[ \label{eq:theta30} |\theta^{(i)}| < \delta \kappa_{0,i}, \qquad \text{ for } \tau <
        \tau_0 \]
      on the component $M^i_{\tau, 1}$ in $K$.

      \bigskip
      To see this it
      is more convenient to consider the (unrescaled) mean curvature
      flows $L^i_t$ corresponding to $M^i_\tau$, normalized so
      that $L^i_{-1} = M^i_0$. Fix a large value of $i$, and
      consider the function $f = \max\{\kappa_{0,i}^{-1}\theta, 0\}$
      along $L^i_t$. This is a nonnegative subsolution of the heat equation, and
      so by the monotonicity formula \eqref{eq:mon} together with our
      bound for $|\nu|$, we have
      \[ \frac{d}{dt} \int_{L^i_t} f\, \rho_{x_1, t_1}\,
        d\mathcal{H}^2 \leq i^{-1} \int_{L^i_t} f\, \rho_{x_1,
          t_1}\, d\mathcal{H}^2, \]
      for any $x_1, t_1$. In particular if we choose $t_1\in (-1,0)$, then 
      \[ \label{eq:mineq1} f(x_1, t_1) \leq e^{i^{-1}} \int_{L^i_{-1}} f\, \rho_{x_1,
          t_1}\, d\mathcal{H}^2. \]
      Let us fix a small $c \in (0,1/10)$ such that if $x\in P_1$ satisfies
      $|x|=1$, then the ball $B_{2c}(x)$ is disjoint from $P_2$. 
      Let $x_1\not=0$. On $M^i_{-1}$, on the set where $|x-x_1| >
      c |x_1|$ we have
      \[\label{eq:rhoineq2}  \frac{\rho_{x_1,t_1}(x,-1)}{\rho_{0,0}(x,-1)} &= \frac{1}{ t_1+1}
        \exp\left(\frac{-|x-x_1|^2 + (t_1+1)|x|^2}{4(t_1+1)}\right) \\
        &\leq \frac{1}{t_1+1} \exp\left(-\frac{|x-x_1|^2}{8(t_1+1)}\right) \\
        &\leq \delta,\]
      as long as $t_1+1$ is sufficiently small, depending on $c,
      \delta$, and on a lower bound for $|x-x_1|$, or equivalently, for
      $|x_1|$. We view $c$ as fixed, since it only depends on the
      planes $P_1,P_2$. 
      If $K'\subset \mathbb{C}^2\setminus \{0\}$ is
      compact and $x_1 \in K'$ is contained in
      the component of $L^i_{t_1}$ corresponding to $P_1$ for
      sufficiently large $i$, then the
      closed ball $\overline{B_{c|x_1|}(x_1)}$ is still contained
      in $\mathbb{C}^2\setminus \{0\}$ and does not intersect the
      component of $L^i_{-1}$ corresponding to $P_2$.
      It follows from this, together with the property (b) above, that
      for sufficiently large $i$ we have
      \[ \int_{L^i_{-1} \cap B_{c|x_1|}(x_1)} f\,
        \rho_{x_1,t_1}\, d\mathcal{H}^2 \leq \delta. \]
      At the same time, if $t_1+1$ is sufficiently small (depending on
      $\delta, K'$, and the planes $P_1,P_2$), using \eqref{eq:rhoineq2} we have
      \[ \int_{L^i_{-1} \setminus B_{c|x_1|}(x_1)} f\rho_{x_1, t_1}\,
        d\mathcal{H}^2 \leq \delta \int_{L^i_{-1}\setminus B_{c|x_1|}(x_1)}
        f\rho_{0,0}\, d\mathcal{H}^2 \leq C\delta, \]
      using property (c) above as well. Combining these two estimates
      with \eqref{eq:mineq1} we get
      \[ f(x_1, t_1) \leq C\delta, \]
      for a larger $C$. 
      Applying the same argument with $f$ replaced by $-f$, we obtain
      the claim \eqref{eq:theta30}. Note that we can pass from
      estimates on
      $L^i_{t_0}$, with $t_0 > -1$ close to $-1$, to estimates on $M^i_\tau$
      with $\tau > 0$ close to zero, with a rescaling by a factor close
      to 1. 

      Repeating the argument with $f= \kappa_{0,i}^{-1}(\theta -
      \kappa_{0,i})$, we obtain an analogous estimate on the component
      $M^i_{\tau,2}$, corresponding to the plane $P_2$: given
      any $\delta > 0$ and a compact set $K\subset (\mathbb{C}^2
      \setminus\{0\})$, we have $\tau_0 > 0$ such that for all large
      $i$ we have
      \[ |\theta^{(i)} - \kappa_{0,i}| < \delta \kappa_{0,i}, \quad \text{
          for } \tau < \tau_0\]
      on the component $M^i_{\tau, 2}$ in $K$.

      Let us now pass the functions
      $\kappa_{0,i}^{-1}\theta^{(i)}$ to the limit as $i\to\infty$, as
      described above, to obtain a pair $(u_1, u_2)$ of solutions of
      the drift heat equation on the planes $P_1, P_2$. We claim that
      $u_1=0$ and $u_2 = 1$. To see that $u_1=0$, let us choose
      $\delta > 0$. The estimates above
      imply that for any compact set $K\subset P_1\setminus \{0\}$ we
      have some $\tau_0 > 0$ such that $|u_1(x, \tau)| < \delta$ for
      $0 < \tau < \tau_0$ and $x\in K$. Using the property (c) above,
      we also have the $L^4$ bound
      \[ \int_{P_1} |u_1|^4\, e^{-|x|^2/4}\, d\mathcal{H}^2 \leq C. \]
      We can therefore choose the set $K$ in a suitable way, depending
      on $C, \delta$, such that
      \[ \int_{P_1 \setminus K} |u_1|^2\, e^{-|x|^2/4}\,
        d\mathcal{H}^2 \leq \delta, \]
      for any $\tau > 0$. 
      It follows that for $\tau\in (0,\tau_0)$ we have
      \[ \int_{P_1} |u_1|^2\, e^{-|x|^2/4}\, d\mathcal{H}^2 \leq
        C\delta \]
      for a constant $C$ independent of $\delta$. By the monotonicity
      formula (on the static plane $P_1$), the same estimate follows
      for all $\tau > 0$. Letting $\delta\to 0$ we find that in fact
      $u_1=0$. The argument for $u_2$ is similar, working with the
      function $u_2-1$. 
 \end{proof}

 \begin{prop}\label{prop:nondeg10}
     Suppose that $\kappa_0$ is chosen sufficiently small (depending
     on $K_0$), and $\epsilon_0, R_0^{-1}$ are small, depending on
     $\kappa_0, K_0$. Then the neck pinch singularity that forms
     according to Proposition~\ref{prop:goodinitialcond} is
     nondegenerate. 
 \end{prop}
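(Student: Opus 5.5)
The plan is to check the two requirements of Definition~\ref{defn:nondeg} separately. The topological requirement is the easier one. By Proposition~\ref{prop:singexist} and its proof, the flow $L_t$ stays smooth and embedded inside the Darboux chart for $t<T_0$. The Lagrangian mean curvature flow of a graded Lagrangian is a Hamiltonian isotopy, since its velocity $\mathbf{H}=J\nabla\theta$ is the Hamiltonian vector field of $\theta$. So for every $t<T_0$, $L_t$ is Hamiltonian isotopic to $L_0$ inside the chart. Property (c) before Proposition~\ref{prop:goodinitialcond} says that $L_0$ is, at the relevant scale, Hamiltonian isotopic to $P_1\# P_2$ with an isotopy close to the identity near the boundary. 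Cutting the isotopy off near the boundary, using the graphicality over $P_1\cup P_2$ on annuli, gives the required isotopy for $t$ close to $T_0$. The real work is identifying the normalized limit $(\theta_{\infty,1},\theta_{\infty,2})$.

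\textbf{Step 1: propagate slow decay to all integer times.} Fix a small $s>0$ and set $K=\{-2s\}$, a compact subset of $\mathbb{R}\setminus\mathbb{Z}$. Proposition~\ref{prop:goodinitialcond} keeps $M_\tau$ $\epsilon$-graphical over $P_1\cup P_2$ for all $\tau\geq 0$, with $\epsilon$ as small as Lemma~\ref{lem:3ann} requires, and the forcing term is $|\nu|\leq e^{-\tau/2}$. Lemma~\ref{lem:3ann} applies equally to the time-translated flows $\tau\mapsto M_{k+\tau}$. Proposition~\ref{prop:slowdecay} gives the base case
\[ \Vert\theta-\underline\theta_1\Vert_1\geq e^{-s}\Vert\theta-\underline\theta_0\Vert_0. \]
Induction with Lemma~\ref{lem:3ann} then yields
\[ \Vert\theta-\underline\theta_{k+1}\Vert_{k+1}\geq e^{-s}\Vert\theta-\underline\theta_k\Vert_k \quad\text{for all } k\geq 0. \]
The same holds between non-integer times by running the argument from any starting time in $[0,1]$, which Proposition~\ref{prop:slowdecay} also permits.

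\textbf{Step 2: identify the limit up to sign.} Take $\tau_i\to\infty$ and form the normalized functions $\theta_i$ as in \eqref{eq:tnorm1}, with limit $\theta_\infty$ on $P_1\cup P_2$. Proposition~\ref{prop:Ecker} and the monotonicity formula let the weighted $L^2$ norms pass to the limit, as in the proof of Lemma~\ref{lem:3ann}. Hence $\theta_\infty$ has unit norm, zero average, and $\Vert\theta_\infty\Vert_{\tau+1}\geq e^{-s}\Vert\theta_\infty\Vert_\tau$. Expand $\theta_\infty$ in the eigenfunctions of the drift Laplacian on $P_1\cup P_2$. The eigenvalue $0$ is spanned by the locally constant functions $(1,0)$ and $(0,1)$, and every other eigenvalue is at least $1/2$. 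With $s<1/4$, the slow decay forces all components with eigenvalue at least $1/2$ to vanish. Indeed, applying the estimate from the backward times $\tau_i-k$, for $k$ large, shows that any such component would have to grow backwards faster than is allowed. So $\theta_\infty$ is locally constant with zero average and unit norm. Since $\int_{P_j}e^{-|x|^2/4}\,d\mathcal{H}^2=4\pi$ on each plane, this means $\theta_\infty=\pm\left(-(8\pi)^{-1/2},(8\pi)^{-1/2}\right)$.

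\textbf{Step 3 (the main obstacle): fix the sign.} Let $e=\left(-(8\pi)^{-1/2},(8\pi)^{-1/2}\right)$. Proposition~\ref{prop:slowdecay} shows that at $\tau=1$ the normalized angle is close to $+e$: $\theta$ is close to $0$ on the $P_1$-sheet and close to $\kappa_0$ on the $P_2$-sheet, in $L^2$ via the $L^4$ bound. I would then prove the following compactness statement. For $\eta$ small there is an $\epsilon$ such that, under the hypotheses of Lemma~\ref{lem:3ann} and the slow-decay inequality on $[k,k+2]$, if the normalized $\theta-\underline\theta_k$ at time $k$ is $\eta$-close to $\pm e$, then at time $k+1$ it is $\eta$-close to the \emph{same} $\pm e$. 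This is argued by contradiction exactly as in Lemma~\ref{lem:3ann}: a limiting solution on $P_1\cup P_2$ that does not decay is $\eta$-close to a fixed multiple of $e$ at all times, because the locally constant part is preserved on the static planes and the remaining modes decay. It therefore cannot jump to near $-e$. Induction over $k$ keeps the normalized angle near $+e$ at every integer time, so every subsequential limit equals $+e$, which gives nondegeneracy.

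The delicate point is making this sign-persistence statement uniform, so that a single $\eta$ works for all $k$. Two things are needed. First, the closeness must be measured after subtracting the average and normalizing at each step. Second, the possible leakage of $L^2$ mass near the origin, where the graphicality fails, has to be controlled by Proposition~\ref{prop:Ecker}, just as in Lemma~\ref{lem:3ann}.
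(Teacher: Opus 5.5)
Your proposal is correct and follows essentially the paper's argument. You iterate Lemma~\ref{lem:3ann} from the base case in Proposition~\ref{prop:slowdecay} to get slow decay at all integer times, and you use that the only non-decaying zero-average solution on $P_1\cup P_2$ is a multiple of $(-1,1)$, so the normalized angle is near $\pm\bigl(-(8\pi)^{-1/2},(8\pi)^{-1/2}\bigr)$. You then fix the sign by induction in $k$ starting from the initial angle gap between the two sheets, and you get the Hamiltonian isotopy condition from property (c) and embeddedness before $T_0$, all as the paper does. The paper's sign step is packaged slightly more simply: it shows unconditionally that each $u_k$ in \eqref{eq:ukdefn} is near one of the two signed profiles on the window $[k,k+10]$, and since $u_k$ and $u_{k+1}$ differ only by a positive factor and an additive constant on the overlapping windows, the sign cannot switch, which avoids the uniform-$\eta$ bookkeeping you flag.
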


 \begin{proof}
 Let us consider the rescaled flow $M_\tau$ centered at
   $(x_0, T_0)$ as above, normalized so that $M_0$ is a
   suitable scaling of $L_0 - x_0$. Let $s > 0$ be small. By choosing the parameters
   $\kappa_0, R_0, \epsilon_0$ suitably, we can arrange that we are in
   the situation of Proposition~\ref{prop:slowdecay}, with this choice
   of $s$. In addition, using
   Proposition~\ref{prop:goodinitialcond}, we can assume that given
   any $\epsilon_3 > 0$,  the $M_\tau$ are $\epsilon_3$-graphical over
   $P_1\cup P_2$ in the annulus $A_{\epsilon_3, \epsilon_3^{-1}}$ for all
   $\tau > 0$. Since by property  (c) at the beginning of the
     section $M_0$ is Hamiltonian
   isotopic to $P_1\# P_2$ inside the ball $B_2$ (with the isotopy
   as close as we like to the identity near $\partial B_2$), it
   follows  that  $M_\tau$ for
   all $\tau > 0$ are also Hamiltonian isotopic to $P_1\# P_2$ in
   $B_2$. 

   Let us denote by $M^1_\tau, M^2_\tau$ the
   components of $M_\tau \cap A_{\epsilon_3, 2}$ corresponding
   to $P_1$ and $P_2$.    From the property (b) at the beginning of
   the section we also know
   that on $M^1_0$ we have $|\theta| < \kappa_0\epsilon_2$, while
   on $M^2_0$ we have $|\theta - \kappa_0| < \kappa_0\epsilon_2$. 
   In particular, assuming that $\epsilon_2$ is sufficiently small,
   it follows that $\theta$ is strictly larger
   on $M^2_0$ than on $M^1_0$. We will next show that the same
   holds for all $\tau > 0$ as well. 

   Choosing $\epsilon_3$ suitably, depending on $s$,
   Lemma~\ref{lem:3ann} implies that for all integers $k \geq 0$ we have
   \[ \Vert \theta - \underline{\theta}_{k+1} \Vert_{k+1} \geq e^{-s}
     \Vert \theta - \underline{\theta}_k\Vert_k. \]
   Up to scalar multiple, the only solution of the drift heat equation
   on the pair of planes $P_1, P_2$ which has zero average, and does
   not decay, is the pair of constant solutions $(1,-1)$. It follows
   that if $s, \epsilon_3$ are sufficiently small, then for all
   $k \geq 0$ the
   normalized angle function
   \[ \label{eq:ukdefn} u_k := \frac{ \theta - \underline{\theta}_k }{\Vert \theta -
       \underline{\theta}_k \Vert_k} \]
   satisfies
   \[ \label{eq:uksign}\left| u_k + \frac{1}{\sqrt{8\pi}}\right| &< 10^{-6}, \text{ on
     } M^1_\tau \\
      \left| u_k - \frac{1}{\sqrt{8\pi}}\right| &< 10^{-6}, \text{ on
      } M^2_\tau, \]
    for $\tau\in [k, k+10]$. The constant $\sqrt{8\pi}$ appears here
    because
    \[ \int_{P_1\cup P_2} e^{-|x|^2/4} = 8\pi. \]
    In addition, since the Lagrangian angle on $M^2_0$ is
    greater than on $M^2_0$, we can see inductively in $k$
    that the signs in \eqref{eq:uksign} can not switch.
 \end{proof}

 We now prove Theorem~\ref{thm:Joyceconj}.
 \begin{proof}[Proof of Theorem~\ref{thm:Joyceconj}]
    We follow the setting and notation of the proof of
    Proposition~\ref{prop:nondeg10}.  Recall that we have the rescaled flow $M_\tau$,
    converging to the union $P_1\cup P_2$ as $\tau\to\infty$, and in
    Proposition~\ref{prop:nondeg10} we studied the asymptotics of the
    Lagrangian angle along this rescaled flow. 
    In order to say something about the Lagrangian angle \emph{after} the
    singular time, we need to consider the original ``unrescaled'' flow
    $L_t$ instead. 

    We consider scalings (and translations) $L^k_t$ of $L_t$
    such that $L^k_{-1}$ is given by $M_k$. Specifically, we
    have
    \[ L^k_t =  e^{-\tau/2} M_{k+\tau},\]
    where $\tau = -\ln(-t)$. We can view this as a (forced) mean curvature flow in
    $\mathbb{R}^N$, and note that as $k\to\infty$, the flows $L^k_t$
    converge to the static flow given by the union $P_1\cup P_2$ of
    two planes.
    
    The flow $L^k_t$ exists smoothly for $t\in [-1,0)$, but as shown
    in \cite{LSSz22},  it can be
    extended to $t=0$ so that $L^k_0$ is a $C^1$ immersed
    submanifold. Moreover the flow can be continued smoothly for $t > 0$. For
    $k$ sufficiently large we can therefore assume that the flow
    $L^k_t$ exists for $t\in [-1,1]$, with a singularity at $(0,0)$,
    but smooth, and satisfying the (forced) mean curvature flow away
    from $(0,0)$. 

    As $k\to
    \infty$, the flows $L^k_t$ converge smoothly on compact subsets of
    $[-1,0)\times ( \mathbb{C}^2\setminus \{0\})$ to $P_1\cup P_2$,
    where $\mathbb{C}^2$ is identified 
    with the tangent space $T_{x_0}X$. It follows, using
    pseudolocality (see Ilmanen-Neves-Schulze~\cite[Theorem
    1.5]{INS19}), that the $L^k_t$ converge to
    $P_1\cup P_2$ smoothly on compact subsets of $[-1,1]\times (\mathbb{C}^2
    \setminus \{0\})$. Since for $t > 0$ the flow $M^k_t$ has two
    components inside $B_2(0)$, it follows that in fact we even have smooth
    convergence on compact sets of $(0,1] \times \mathbb{C}^2$
    (i.e. across $(0,1] \times \{0\}$) to $P_1\cup P_2$, where the two
    components converge to the two planes. 
    
    Let us consider the normalized Lagrangian angle function
    $U_k(x,t)$ along the flow $M^k_t$, where $U_k(x, -1)$ agrees with
    the function $u_k$ in \eqref{eq:ukdefn}. As $k\to\infty$, the
    functions $U_k$ converge to a solution of the heat
    equation on $P_1\cup P_2$, with the convergence being smooth on
    compact sets in $[-1,1]\times\mathbb{C}^2$ away from $[-1,0] \times \{0\}$. A priori this
    convergence is only along subsequences, but from the discussion
    above we know that this limit must be the pair of constants $(-(8\pi)^{-1/2},
    (8\pi)^{-1/2})$ on the two planes.

    In particular let us consider the space-time region
    \[ A := ([0, 1/2] \times B_{1/2}) \setminus ([0,1/10] \times
      B_{1/10}). \]
    The convergence $M^k_t\to (P_1\cup P_2)$ is smooth on this
    region, and in particular we can identify the two components
    $M^k_{t,1}, M^k_{t,2}$ converging to $P_1$ and $P_2$. 
    We know that for sufficiently large $k$, the function $U_k$ is
    strictly greater on $M^k_{t,2}$ than on $M^k_{t, 1}$ on the region
    $A$. After rescaling to the original flow, this means that the Lagrangian angle $\theta$
    is strictly greater on the component $L^2_{t}$ than on $L^1_{t}$
    in a spacetime region $A_k$. Here $L^1_{t}, L^2_{t}$ are the two
    components given locally after the singular time. Note, however
    that the union of these $A_k$ as $k\to \infty$ contains a
    spacetime region of the form $(T_0, T_0+\delta) \times
    B_\delta(x_0)$. In particular it follows that
    $\theta_{L_t^2}(p(t)) > \theta_{L^1_t}(p(t))$ for $t\in (T_0, T_0
    + \delta)$ as required.
\end{proof}

\section{Teardrop singularities}\label{sec:teardrop}
In this section we first recall some of the results from
\cite{LSSz22}, and then we will prove Theorem~\ref{thm:ndteardrop}.
We consider a rational, graded Lagrangian mean curvature flow $L_t\subset
\mathbb{C}^2$ for $t\in [0,T)$, with uniform area ratio bounds $|L_t \cap B_R(x)| \leq
CR^2$ for a constant $C > 0$. Note that it is enough to assume these
conditions for $L_0$, at the initial time (see \cite[Lemma
B.1.]{Neves07}).

We assume that at time $T_0$ the flow develops a teardrop singularity at
$\x_0$. This means that a tangent flow at the spacetime point
$(\x_0,T_0)$ is given by the union $P_1\cup_\ell P_2$, where $P_1$ and
$P_2$ are two Lagrangian planes intersecting in a line $\ell$. In
other words there is a sequence of scales $\sigma_i \to \infty$ such
that the parabolic rescalings
\[ L^i_t = \sigma_i (L_{T + \lambda_i^{-2} t} - \x_0)\]
converge weakly to the static flow $P_1\cup_{\ell} P_2$ on
$\mathbb{C}^2 \times [-1,0)$ as $i\to \infty$. For the flow in
$\mathbb{C}^2$ the rationality
assumption on the flow $L_t$ means that there is some $\alpha > 0$
such that for any loop $\gamma \subset L_0$ we have $\int_\gamma
\lambda \in \alpha \mathbb{Z}$ for the Liouville form
$\lambda$. Up to rescaling $L_t$ we can assume that $\alpha = 2\pi$,
and so we can define a function $\beta : L_t \to \mathbb{R} / 2\pi\mathbb{Z}$
satisfying $d\beta = \lambda|_{L_t}$. Similar primitives for $\lambda$
will also exist for the rescalings $L^i_t$, as long as the scaling
factors $\sigma_i$ are integers. 

Let us introduce real coordinates $x,y,z,w$ on $\mathbb{C}^2$ so that
$P_1 = \{x,w = 0\}$ and $P_2 = \{y,w = 0\}$. In order to define
\emph{nondegenerate} teardrop singularities, we study the asymptotics
of the function $w$ on the tangent flow, similarly to
Definition~\ref{defn:nondeg}, that used the asymptotics of $\theta$. To
define the normalized limit, we consider the rescaled flow
\[ M_\tau = e^{\tau/2} (L_{T_0 - e^{-\tau}} - \x_0). \]
By assumption along a sequence $\tau_i\to\infty$ we have $M_{\tau_i}
\rightharpoonup P_1\cup_\ell P_2$. As in \cite{LSSzAncient}, let us introduce the
rescaled coordinate functions $\tilde{x} = e^{-\tau/2} x$, and
analogously $\tilde{y}, \tilde{z}, \tilde{w}$, which are all solutions
of the drift heat equation
\[ \frac{\partial}{\partial \tau} f = \Delta f - \mathbf{x} \cdot
  \nabla f \]
along $M_\tau$. Let us define the 3-dimensional space $V =
\mathrm{span}\{\tilde{x}, \tilde{y}, \tilde{z}\}$ of solutions of the
drift heat equation along $M_\tau$. We are interested in extracting
the asymptotic behavior of $\tilde{w}$, ``modulo'' the space
$V$. Therefore for all $\tau$ we define $\Pi_\tau \tilde{w} =
\tilde{w} - f$, where $f\in V$ and $\tilde{w}-f$ is $L^2$-orthogonal
to $V$ at time $\tau$:
\[ \int_{M_\tau} (\tilde{w} - f) h\, e^{-|\x|/4}\, d\mathcal{H}^2 =
  0, \text{ for all } h\in V.\]
It was shown in \cite[Proposition 3.11]{LSSzAncient} that a three
annulus lemma analogous to Lemma~\ref{lem:3ann} holds for the norms
$\Vert \Pi_\tau\tilde{w}\Vert_\tau$ as well. 

Consider the normalized solutions
\[ \tilde{W}_i = \frac{\Pi_{\tau_i} \tilde{w}}{\Vert \Pi_{\tau_i}
    \tilde{w}\Vert_{\tau_i}} \]
of the drift heat equation. Translating in time, these define
solutions of the drift heat equation along $M^i_\tau :=
M_{\tau+\tau_i}$, with $L^2$-norm 1 at $\tau=0$. 
Using the three annulus lemma, arguing
similarly to \cite[Proposition 3.10]{LSSzAncient}, and in particular
using \cite[Proposition 3.7]{LSSzAncient}, we can extract a homogeneous
limit $\tilde{W}_\infty$
of these along a subsequence, defining a
solution of the drift heat equation on $P_1\cup_\ell P_2$. This
convergence is smooth on compact sets in $(0, \infty) \times
(\mathbb{C}^2 \setminus \ell)$, and also in $L^2$ for any $\tau > 0$.
We have the following.
\begin{lemma}
  The limit $\tilde{W}_\infty$ has growth rate at most $-1/2$,
  i.e. $\Vert \tilde{W}_\infty\Vert_\tau \leq Ce^{-\tau/2}$. 
\end{lemma}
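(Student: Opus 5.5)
The plan is to compare $\tilde{w}$ with the solution $e^{-\tau/2}$ times a degree-one homogeneous function, using the fact that $\tilde{w}$ itself decays at rate $-1/2$ on the tangent flow. In the original coordinates $w$ vanishes identically on $P_1\cup_\ell P_2$. We have $\tilde{w}=e^{-\tau/2}w$, and $w$ restricted to $M_\tau$ has $L^2$ norm $\Vert w\Vert_{L^2(M_\tau)}\to 0$ as $\tau\to\infty$ along the convergence $M_\tau\rightharpoonup P_1\cup_\ell P_2$. We use the area ratio bounds and Gaussian weights to control the tails. Thus $\Vert \tilde{w}\Vert_\tau = o(e^{-\tau/2})$. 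Since $\Pi_\tau$ is an $L^2$-orthogonal projection, it follows that
\[ \Vert \Pi_\tau \tilde{w}\Vert_\tau \leq \Vert\tilde{w}\Vert_\tau = o(e^{-\tau/2}). \]

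The argument then proceeds by contradiction, using the three annulus lemma of \cite[Proposition 3.11]{LSSzAncient}. The limit $\tilde{W}_\infty$ is homogeneous, so $\Vert \tilde{W}_\infty\Vert_\tau = e^{-\mu\tau}$ for some degree $\mu$. Suppose $\mu<1/2$, i.e.\ the limit decays more slowly than $e^{-\tau/2}$, and pick $s$ with $-2\mu > s > -1$ and $s\notin\mathbb{Z}$. For large $i$ the normalized solutions $\tilde{W}_i$ on $[\tau_i,\tau_i+2]$ are close in $L^2$ to $\tilde{W}_\infty$. This closeness uses the $L^2$ convergence for $\tau>0$, together with Proposition~\ref{prop:Ecker}-type estimates to pass to $\tau=0$. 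Hence the growth inequality
\[ \Vert \Pi\tilde{w}\Vert_{\tau_i+1}\geq e^{s/2}\Vert\Pi\tilde{w}\Vert_{\tau_i} \]
holds. Iterating the three annulus lemma forward, which is valid since $M_\tau$ stays close to $P_1\cup_\ell P_2$ for all later $\tau$ by uniqueness/closeness of tangent flows of this type (\cite[Proposition 2.7]{LSSzAncient}), gives
\[ \Vert\Pi_\tau\tilde{w}\Vert_\tau\geq c\,e^{s(\tau-\tau_i)/2} \quad\text{for all } \tau\geq\tau_i. \]
Since $s>-1$, this contradicts the $o(e^{-\tau/2})$ bound above.

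The main obstacle is the forward iteration. The three annulus lemma requires $M_\tau$ to be graphical over $P_1\cup_\ell P_2$ on an annulus for all $\tau$ in each window, and since tangent flow uniqueness is unknown this is only guaranteed along the subsequence $\tau_i$. I would address this by first trying to use the fact from \cite{NevesSurvey, LSSzAncient} that every tangent flow is a pair of planes meeting along a line. Combined with the variance of $\Pi_\tau$ with respect to slowly rotating $P_1, P_2$, this should let the three annulus lemma apply with planes varying with $\tau$. If that fails, a fallback is a direct comparison at a single time. The limit inequality $\Vert\tilde{W}_\infty\Vert_1\geq e^{s/2}$ combined with monotonicity for $|\Pi\tilde{w}|^2$ should already show that the normalizing factors cannot be $o(e^{-\tau/2})$, which yields the bound with the constant $C$.
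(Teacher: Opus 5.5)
Your main line is the paper's argument: assume the homogeneous limit decays more slowly than $e^{-\tau/2}$, read off a one-step growth inequality for $\Vert \Pi_\tau\tilde w\Vert_\tau$ near $\tau_i$, propagate it to all later unit steps with the projected three annulus lemma, and contradict $\Vert \tilde w\Vert_\tau \le Ce^{-\tau/2}$. The paper first uses the spectral gap on $P_1\cup_\ell P_2$ to force degree $0$ and then takes $s=-1/2$; your choice of a non-integer $s\in(-1,-2\mu)$ is an equivalent variant. Three points in your write-up need correcting, though none of them changes the approach.

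(i) The claim $\Vert\tilde w\Vert_\tau = o(e^{-\tau/2})$ is not justified. You only know $M_\tau \rightharpoonup P_1\cup_\ell P_2$ along $\tau_i$, and other tangent flows may be different pairs of planes on which $w\not\equiv 0$. You do not need it: the area ratio bounds give $\Vert \tilde w\Vert_\tau\le Ce^{-\tau/2}$, and since $s>-1$ this already contradicts $c\,e^{s(\tau-\tau_i)/2}$. This is the bound the paper uses.

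(ii) Base the growth inequality at the times $\tau_i+1$ and $\tau_i+2$, as the paper does, rather than at $\tau_i$ and $\tau_i+1$. The convergence $\tilde W_i\to\tilde W_\infty$ is in $L^2$ only for $\tau>0$, and the Ecker estimates give no gain at $\tau=0$ (there $p(0)=2$). At $\tau=0$ you only know $\lim_{\tau\to 0}\Vert \tilde W_\infty\Vert_\tau\le 1$. That is the wrong direction for deducing $\Vert\Pi\tilde w\Vert_{\tau_i+1}\ge e^{s/2}\Vert \Pi\tilde w\Vert_{\tau_i}$.

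(iii) For the forward iteration, the paper does exactly what your main line does: once $i$ is large, it applies the three annulus lemma for every $k\ge 0$. You should drop the appeal to uniqueness of tangent flows, which is open, as you yourself note. Your single-time fallback would not work. One window with ratio about $e^{-\mu}$ is perfectly compatible with $\Vert\Pi_\tau\tilde w\Vert_\tau\le Ce^{-\tau/2}$. In fact the normalizing factors $\Vert \Pi_{\tau_i}\tilde w\Vert_{\tau_i}$ are $o(e^{-\tau_i/2})$, because $w\to 0$ in weighted $L^2$ on $M_{\tau_i}$. So the contradiction genuinely requires the growth ratio to persist over infinitely many consecutive windows, which is what the iterated three annulus lemma supplies.
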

\begin{proof}
  Suppose that $\tilde{W}_\infty$ had a faster growth rate. Then by homogeneity
  it would have to have degree 0, since on $P_1\cup_\ell P_2$ there
  are no homogeneous solutions with rates between $-1/2$ and 0
  (corresponding to linear functions and constants on the two planes). Since
  $\tilde{W}_\infty$ is the limit of the $\tilde{W}_i$ (translated in
  time), it follows that in this case along a subsequence of the
  $\tau_i$ we would have
  \[ \Vert \Pi_{\tau_i} \tilde{w}\Vert_{\tau_i+2} \geq e^{-1/4}  \Vert
    \Pi_{\tau_i} \tilde{w}\Vert_{\tau_i+1}. \]
  Once $i$ is sufficiently large, the three annulus lemma can be
  applied, and it implies that we have
  \[ \Vert \Pi_{\tau_i} \tilde{w}\Vert_{\tau_i+k+1} \geq e^{-1/4}  \Vert
    \Pi_{\tau_i} \tilde{w}\Vert_{\tau_i + k}, \]
  for all $k\geq 0$. In particular it would follow that
  \[ \Vert \tilde{w}\Vert_{\tau_i + k} \geq c e^{-k/4}, \]
  for some $c > 0$, and all sufficiently large $k$. This is a
  contradiction, since by the uniform area ratio bounds we have a
  constant $C > 0$ such that $\Vert \tilde{w}\Vert_\tau \leq C
  e^{-\tau/2}$ for all $\tau > 0$. 
\end{proof}

It is natural to expect that in a generic situation the limit
$\tilde{W}_\infty$ should have the largest possible growth rate. The
space of homogeneous solutions of the drift heat equation on
$P_1\cup_\ell P_2$ with rate $-1/2$ is spanned by $\tilde{x}, \tilde{y},
\tilde{z}$ and $\theta\tilde{z}$, where note that $\theta$ equals two
different constants on the two planes (see also \cite[Lemma
3.6]{LSSzAncient}). Since by construction $\tilde{W}_\infty$ is
orthogonal to $\tilde{x}, \tilde{y}, \tilde{z}$, we make the following
definition.

\begin{definition}\label{defn:ndteardrop}
 The teardrop singularity at $(\x_0, T_0)$ is \emph{nondegenerate} if
 $\tilde{W}_\infty = c \theta \tilde{z}$ for a constant $c\not=0$. 
\end{definition}

We now prove Theorem~\ref{thm:ndteardrop}, which we restate here.
\begin{thm}
  Suppose that $L_t$ admits a nondegenerate teardrop singularity at
  $(\x_0, T_0)$. Then the flow $L_t$ cannot be embedded in any
  neighborhood $B_r(\x_0) \times (T_0 - r^2, T_0)$. 
\end{thm}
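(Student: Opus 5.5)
The plan is to argue by contradiction: assume $L_t$ is embedded in $B_r(\x_0)\times(T_0-r^2,T_0)$, and show that nondegeneracy forces two sheets of $M_\tau$ to cross near $\ell$ for large $\tau$.

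\emph{Step 1: the linear model.} On $P_1\cup_\ell P_2$ the Lagrangian angle takes two distinct constants $\theta_1\neq\theta_2$. So the limit $\tilde W_\infty=c\theta\tilde z$ says the normal $w$-displacement is $c\theta_1\tilde z$ on $P_1$ and $c\theta_2\tilde z$ on $P_2$. Geometrically, to first order $M_{\tau_i}$ looks like the pair of perturbed planes
\[ P_1'=\{x=0,\ w=a_1 z\},\qquad P_2'=\{y=0,\ w=a_2 z\},\qquad a_j=\eta_i c\theta_j, \]
where $\eta_i=\Vert\Pi_{\tau_i}\tilde w\Vert_{\tau_i}$ and we work modulo the space $V$. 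Since $a_1\neq a_2$, these planes meet only where $x=y=0$ and $a_1z=a_2z$, i.e. transversally at the single point $z=0$. The components in $V=\mathrm{span}\{\tilde x,\tilde y,\tilde z\}$ correspond to translations of $\x$ along $P_1+P_2$, which move that point but do not remove it. The aim is to show an embedded $M_\tau$ cannot realise this configuration.

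\emph{Step 2: quantitative closeness.} From the convergence $\tilde W_i\to\tilde W_\infty$, smooth on compact subsets of $(0,\infty)\times(\mathbb{C}^2\setminus\ell)$, I will show that for large $i$ and $\tau\in[\tau_i+1,\tau_i+2]$ the following holds. Away from a $\delta$-neighbourhood of $\ell$, in a fixed ball, $M_\tau$ is a graph over $P_1\cup P_2$. In that graph the $w$-component equals $\eta_i(c\theta_j z+o(1))+(\text{element of }V)$ on the $j$-th sheet, with the $o(1)$ small relative to $\eta_i$. The $V$-part is harmless, since subtracting it amounts to translating the sheets. The lemma above (rate at most $-1/2$), together with the three annulus lemma for $\Vert\Pi_\tau\tilde w\Vert_\tau$ from \cite{LSSzAncient}, is what ensures the $\theta\tilde z$ mode dominates the other $w$-components uniformly on this region.

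\emph{Step 3: topological crossing argument.} For each $z_0\in[-1,1]$, slice $M_\tau$ by $\{z=z_0\}$, giving a curve $\gamma_{z_0}$ in the $(x,y,w)$-space, restricted to a ball $B$ around the axis. Outside a small tube around the axis, $\gamma_{z_0}$ consists of four arcs close to the half-axes of the $x$- and $y$-axes. The $x$-arcs sit at height $\approx a_2z_0$ and the $y$-arcs at height $\approx a_1z_0$, so the relative height of the two pairs changes sign between $z_0=-1$ and $z_0=1$. If $M_\tau$ is embedded, and after a small perturbation transverse to the $z$-slicing, the family $\gamma_{z_0}$ is an isotopy of a $2$-string tangle in $B$ with fixed boundary pattern. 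The two end configurations are tangles differing by a single crossing change, i.e. distinct rational tangles. Hence somewhere the slices must pass through each other, and that crossing is a self-intersection of $M_\tau$. Rescaling back gives immersed points at spacetime points converging to $(\x_0,T_0)$, contradicting embeddedness.

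\emph{Main obstacle.} The hard part is Step 3 near $\ell$, where we have no graphical control and $M_\tau$ could contain small necks or handles. These might make the slices non-generic or let the curves "go around" each other. I would first try to rule this out using rationality (the primitive $\beta$ of $\lambda$) and the Gaussian area close to $2$, to control the topology of $M_\tau\cap\{|x|,|y|<\delta\}$, along the lines of the connectedness and exactness hypotheses in Proposition~\ref{prop:LSSzunique}. Failing that, I would replace the tangle argument by a homological one: a mod $2$ intersection number between the two sheets, computed from boundary data on $\partial(B\times[-1,1])$. This is nonzero by the sign change of $a_2-a_1$, which would force a double point of any embedded-looking filling.
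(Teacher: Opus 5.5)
Your Step~3 has a genuine gap. It is more than the technical issue you flag at the end: everything it uses is fixed-time information, and at a fixed time that information does not obstruct embeddedness.

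Here is a concrete example. The rotated planes $P_1'=\{x=0,\ w=a_1z\}$ and $P_2'=\{y=0,\ w=a_2z\}$ from your Step~1 are Lagrangian and meet transversally at the origin. So you can perform Polterovich surgery there with a neck much smaller than the height separation of the sheets. The result is an embedded, exact Lagrangian, with Gaussian area close to $2$, that coincides with $P_1'\cup P_2'$ outside a tiny ball. It therefore satisfies every conclusion of your Steps~1 and~2.

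Slicing this surface by $z$ does not give an isotopy of tangles. It is a cylinder, and $z$ is linear on both of its planar ends, so $z$ must have critical points (generically two saddles). These two band moves realise your crossing change. Equivalently, the boundary data you want to use form a Hopf link, which bounds an embedded annulus; the surgered surface itself is one.

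This defeats each of your proposed steps:
\begin{itemize}
\item A generic perturbation makes $z|_{M_\tau}$ Morse, not free of critical points, so the slices form a movie with saddles rather than an isotopy.
\item A mod~$2$ intersection count between ``the two sheets'' only makes sense once you know the filling splits into two pieces, one per sheet. That splitting is exactly what is unknown near $\ell$.
\item Neither rationality nor Gaussian area close to $2$ can rule out such necks, as the example shows.
\end{itemize}

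There is also a scale problem. The convergence $\tilde W_i\to c\theta\tilde z$ is smooth only away from all of $\ell$, not just away from the origin. So you control $M_\tau$ only outside a fixed $\delta$-tube around $\ell$, including near $z_0=\pm1$. Meanwhile the relative height of the two sheets tends to zero as $i\to\infty$. Hence even your ``end tangles'' at $z_0=\pm1$ are not determined: the over/under information lives inside the uncontrolled tube.

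What is missing is an input that uses the flow over a time interval, and in particular uses that $\theta$ takes different values on the two sheets. The paper obtains this from \cite[Proposition 4.5]{LSSzAncient}, in three steps.
\begin{enumerate}
\item It converts $\tilde W_i\to c\theta\tilde z$ into $\sigma_i^{-1}(w-\phi_i)\to z\theta$ for the unrescaled flows $L^i_t$, with $\phi_i\in\mathrm{span}(x,y,z)$.
\item It shows $\sigma_i\to0$ and $\phi_i\to0$, using that $z\theta$ is $L^2$-orthogonal to $x,y,z$ on $P_1\cup P_2$.
\item Choosing the scale factors $e^{\tau_i/2}$ to be integers, rationality gives Liouville primitives $\beta_i:L^i_t\to\mathbb{R}/2\pi\mathbb{Z}$ with $(\partial_t-\Delta)(\beta_i+2t\theta_i)=0$. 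The real-valued functions $\cos(\beta_i+2(t-s_1)\theta_i)$ are what allow the key result \cite[Proposition 4.3]{LSSzAncient} to go through. The conclusion is that $L^i_t$ fails to be embedded in $B_2(0)$ for some $t\in(-1,0)$.
\end{enumerate}
Your Step~1 picture is the right heuristic for why the theorem should hold. But the surgery example shows that no fixed-time topological argument can finish the proof; a dynamical ingredient of this kind is needed.
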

\begin{proof}
  The proof follows more or less directly from \cite[Proposition
  4.5]{LSSzAncient}. By assumption we have a sequence $\tau_i\to
  \infty$ such that the translated rescaled flows $M^i_\tau :=
  M_{\tau+ \tau_i}$ converge weakly to the static flow $P_1\cup P_2$,
  and the normalized functions $\tilde{W}_i$, shifted in time,
  converge to $c\theta \tilde{z}$. Write $L^i_t$ for the mean
  curvature flows normalized so that $L^i_{-1} = M^i_0$. In particular
  the flows $L^i_t$ converge weakly to the static flow $P_1\cup P_2$
  as well. From the convergence of the $\tilde{W}_i$ it
  follows that for a sequence of constants $\sigma_i$, and functions
  $\phi_i\in \mathrm{span}(x,y,z)$ we have
  \[ \sigma_i^{-1} (w - \phi_i) \to z\theta, \]
  where the convergence is smooth on compact subsets of $[-1,0) \times
  (\mathbb{C}^2\setminus \{0\})$, and also in $L^2$.
  If we had $\sigma_i \not\to 0$, then along a subsequence
  we would have $\sigma_i^{-1} \phi_i \to z\theta$ since we  have $w
  \to 0$ (as $w$ vanishes on $P_1\cup P_2$). This is not possible,
  since the coordinate functions $x,y,z$ are $L^2$-orthogonal to
  $z\theta$ on $P_1\cup P_2$. Therefore we must have $\sigma_i\to 0$,
  which implies that we also have $\phi_i \to 0$.

  We next want to apply \cite[Proposition 4.5]{LSSzAncient}. In that
  paper we considered an exact ancient solution of LMCF, and a
  sequence of flows $L^i_t$ on $[-1,0)$ was obtained as a sequence of
  blowdowns at $-\infty$, converging to $P_1\cup P_2$. In our setting
  the flow is not assumed to be exact, but instead our original flow
  $L_t$ is rational. Our sequence of rescalings $L^i_t$ are such that
  $L^i_{-1} = M_{\tau_i} = e^{\tau_i/2}(L_{T_0 - e^{-\tau_i}} - \x_0),
  $
  and without loss of generality we can assume that the factors
  $e^{\tau_i/2}$ are all integers. It follows that then there will be
  primitives $\beta_i : L^i_t \to \mathbb{R} / 2\pi\mathbb{Z}$
  satisfying the equation
  \[ (\partial_t - \Delta) (\beta_i + 2t\theta_i) = 0 \]
  as in \cite[Lemma 4.1]{LSSzAncient}. While these $\beta_i$ are
  multivalued, the key result, \cite[Proposition 4.3]{LSSzAncient} uses
  $B_i = \cos(\beta_i + 2(t-s_1)\theta_i)$ (see loc. cit. for details
  on the notation), and this function is a real valued function on
  $L^i_t$. This means that the proof of \cite[Proposition
  4.5]{LSSzAncient} goes through for the sequence of rescalings $L^i_t$ as well,
  even though our flow is only rational rather than exact. The
  consequence is that in our setting the flow $L^i_t$ is not embedded
  in the ball $B_2(0)$, for some value of $t\in (-1,0)$. 
\end{proof}

The next step in
verifying Joyce's conjectural picture would be to use the existence of
an immersed point, as in the conclusion of the theorem, to show that
as $t\to T_0$, there is a holomorphic disk with boundary on $L_t$,
whose area converges to zero. This would show that such a
nondegenerate teardrop singularity can only occur if $L_t$ has
obstructed Floer homology just before the singular time, and we
should have done an earlier ``opening the nech'' surgery to prevent
this. For a more detailed explanation, see Joyce~\cite[Section 3.4,
Principle 3.29]{JoyceEMS}.

\bibliography{mybib}{}

\begin{thebibliography}{10}

\bibitem{Anc06}
Henri Anciaux.
\newblock Construction of {L}agrangian self-similar solutions to the mean
  curvature flow in {$\Bbb C^n$}.
\newblock {\em Geom. Dedicata}, 120:37--48, 2006.

\bibitem{BK23}
Richard~H Bamler and Bruce Kleiner.
\newblock On the multiplicity one conjecture for mean curvature flows of
  surfaces.
\newblock {\em arXiv:2312.02106}, 12 2023.

\bibitem{BM17}
Tom Begley and Kim Moore.
\newblock On short time existence of {L}agrangian mean curvature flow.
\newblock {\em Math. Ann.}, 367(3-4):1473--1515, 2017.

\bibitem{Bog98}
Vladimir~I. Bogachev.
\newblock {\em Gaussian measures}, volume~62 of {\em Mathematical Surveys and
  Monographs}.
\newblock American Mathematical Society, Providence, RI, 1998.

\bibitem{CCMS24}
Otis Chodosh, Kyeongsu Choi, Christos Mantoulidis, and Felix Schulze.
\newblock Mean curvature flow with generic initial data.
\newblock {\em Invent. Math.}, 237(1):121--220, 2024.

\bibitem{CCMS24_2}
Otis Chodosh, Kyeongsu Choi, Christos Mantoulidis, and Felix Schulze.
\newblock Revisiting generic mean curvature flow in $\mathbb{R}^3$.
\newblock {\em arXiv:2409.01463}, 09 2024.

\bibitem{CM12}
Tobias~H. Colding and William~P. Minicozzi, II.
\newblock Generic mean curvature flow {I}: generic singularities.
\newblock {\em Ann. of Math. (2)}, 175(2):755--833, 2012.

\bibitem{Ecker}
Klaus Ecker.
\newblock Logarithmic {S}obolev inequalities on submanifolds of {E}uclidean
  space.
\newblock {\em J. Reine Angew. Math.}, 522:105--118, 2000.

\bibitem{Fuk03}
Kenji Fukaya and Kenji Fukaya.
\newblock Galois symmetry on {F}loer cohomology.
\newblock {\em Turkish J. Math.}, 27(1):11--32, 2003.

\bibitem{GS09}
Zhou Gang and Israel~Michael Sigal.
\newblock Neck pinching dynamics under mean curvature flow.
\newblock {\em J. Geom. Anal.}, 19(1):36--80, 2009.

\bibitem{Hui90}
Gerhard Huisken.
\newblock Asymptotic behavior for singularities of the mean curvature flow.
\newblock {\em J. Differential Geom.}, 31(1):285--299, 1990.

\bibitem{Ilm95}
T.~Ilmanen.
\newblock Singularities of mean curvature flow of surfaces.
\newblock {\em preprint}, 1995.

\bibitem{INS19}
Tom Ilmanen, Andr\'{e} Neves, and Felix Schulze.
\newblock On short time existence for the planar network flow.
\newblock {\em J. Differential Geom.}, 111(1):39--89, 2019.

\bibitem{IJS16}
Yohsuke Imagi, Dominic Joyce, and Joana Oliveira~dos Santos.
\newblock Uniqueness results for special {L}agrangians and {L}agrangian mean
  curvature flow expanders in {$\Bbb{C}^m$}.
\newblock {\em Duke Math. J.}, 165(5):847--933, 2016.

\bibitem{JoyceEMS}
Dominic Joyce.
\newblock Conjectures on {B}ridgeland stability for {F}ukaya categories of
  {C}alabi-{Y}au manifolds, special {L}agrangians, and {L}agrangian mean
  curvature flow.
\newblock {\em EMS Surv. Math. Sci.}, 2(1):1--62, 2015.

\bibitem{JLT10}
Dominic Joyce, Yng-Ing Lee, and Mao-Pei Tsui.
\newblock Self-similar solutions and translating solitons for {L}agrangian mean
  curvature flow.
\newblock {\em J. Differential Geom.}, 84(1):127--161, 2010.

\bibitem{LSz24}
Yang Li and G{\'a}bor Sz{\'e}kelyhidi.
\newblock Singularity formations in lagrangian mean curvature flow.
\newblock 10 2024.

\bibitem{LN13}
Jason~D. Lotay and Andr\'{e} Neves.
\newblock Uniqueness of {L}angrangian self-expanders.
\newblock {\em Geom. Topol.}, 17(5):2689--2729, 2013.

\bibitem{LotayOliveira}
Jason~D. Lotay and Goncalo Oliveira.
\newblock Neck pinch singularities and joyce conjectures in lagrangian mean
  curvature flow with circle symmetry.
\newblock 05 2023.

\bibitem{LSSz22}
Jason~D. Lotay, Felix Schulze, and G{\'a}bor Sz{\'e}kelyhidi.
\newblock Neck pinches along the lagrangian mean curvature flow of surfaces.
\newblock 08 2022.

\bibitem{LSSzAncient}
Jason~D. Lotay, Felix Schulze, and G\'{a}bor Sz\'{e}kelyhidi.
\newblock Ancient solutions and translators of {L}agrangian mean curvature
  flow.
\newblock {\em Publ. Math. Inst. Hautes \'{E}tudes Sci.}, 140:1--35, 2024.

\bibitem{Neves07}
Andr\'{e} Neves.
\newblock Singularities of {L}agrangian mean curvature flow: zero-{M}aslov
  class case.
\newblock {\em Invent. Math.}, 168(3):449--484, 2007.

\bibitem{NevesSurvey}
Andr\'{e} Neves.
\newblock Recent progress on singularities of {L}agrangian mean curvature flow.
\newblock In {\em Surveys in geometric analysis and relativity}, volume~20 of
  {\em Adv. Lect. Math. (ALM)}, pages 413--438. Int. Press, Somerville, MA,
  2011.

\bibitem{Neves13}
Andr\'{e} Neves.
\newblock Finite time singularities for {L}agrangian mean curvature flow.
\newblock {\em Ann. of Math. (2)}, 177(3):1029--1076, 2013.

\bibitem{Pol91}
L.~Polterovich.
\newblock The surgery of {L}agrange submanifolds.
\newblock {\em Geom. Funct. Anal.}, 1(2):198--210, 1991.

\bibitem{SS20}
Felix Schulze and Natasa Sesum.
\newblock Stability of neckpinch singularities.
\newblock {\em arXiv:2006.06118}, 06 2006.

\bibitem{Sei99}
Paul Seidel.
\newblock Lagrangian two-spheres can be symplectically knotted.
\newblock {\em J. Differential Geom.}, 52(1):145--171, 1999.

\bibitem{Smoczyk}
Knut Smoczyk.
\newblock A canonical way to deform a lagrangian submanifold.
\newblock {\em arXiv:dg-ga/9605005}.

\bibitem{STW24}
Wei-Bo Su, Chung-Jun Tsai, and Albert Wood.
\newblock Infinite-time singularities of the lagrangian mean curvature flow.
\newblock {\em arXiv:2401.02228}, 01 2024.

\bibitem{SWX25}
Ao~Sun, Zhihan Wang, and Jinxin Xue.
\newblock Passing through nondegenerate singularities in mean curvature flows.
\newblock {\em arXiv:2501.16678}, 01 2025.

\bibitem{Thomas01}
R.~P. Thomas.
\newblock Moment maps, monodromy and mirror manifolds.
\newblock In {\em Symplectic geometry and mirror symmetry ({S}eoul, 2000)},
  pages 467--498. World Sci. Publ., River Edge, NJ, 2001.

\bibitem{TY02}
R.~P. Thomas and S.-T. Yau.
\newblock Special {L}agrangians, stable bundles and mean curvature flow.
\newblock {\em Comm. Anal. Geom.}, 10(5):1075--1113, 2002.

\bibitem{Whi94}
Brian White.
\newblock Partial regularity of mean-convex hypersurfaces flowing by mean
  curvature.
\newblock {\em Internat. Math. Res. Notices}, (4):186 ff., approx. 8 pp.\,
  1994.

\bibitem{White05}
Brian White.
\newblock A local regularity theorem for mean curvature flow.
\newblock {\em Ann. of Math. (2)}, 161(3):1487--1519, 2005.

\bibitem{Wol05}
Jon Wolfson.
\newblock Lagrangian homology classes without regular minimizers.
\newblock {\em J. Differential Geom.}, 71(2):307--313, 2005.

\bibitem{Wood24}
Albert Wood.
\newblock Singularities of equivariant {L}agrangian mean curvature flow.
\newblock {\em Comm. Anal. Geom.}, 32(7):1905--1952, 2024.

\end{thebibliography}
\bibliographystyle{plain}

\end{document}